\theoremstyle{plain}
\newtheorem{fact}{Fact}[section]
\newtheorem{theo}[fact]{Theorem}
\newtheorem{lem}[fact]{Lemma}
\newtheorem{defi}[fact]{Definition}
\newtheorem{rmk}[fact]{Remark}
\newtheorem{coro}[fact]{Corollary}
\newcommand{\M}{\mathcal{M}}
\newcommand{\C}{\mathcal{C}}
\newcommand{\E}{\mathcal{E}}
\newcommand{\W}{\mathcal{W}}
\newcommand{\Fib}{\mathcal{F}\textit{ib}}
\newcommand{\Cof}{\mathcal{C}\textit{of}}
\title{Model category structures from rigid objects in exact categories}
\author{Lucie JACQUET-MALO\\
lucie.jacquet.malo@u-picardie.fr\\
LAMFA\\
33 rue Saint Leu\\
80 000 AMIENS\\
FRANCE}
\begin{document}

\maketitle

\begin{center}
\large{\sc{Lucie Jacquet-Malo \\ lucie.jacquet.malo@u-picardie.fr}}
\end{center}

\begin{abstract}
Let $\E$ be a weakly idempotent complete exact category with enough injective and projective objects. Assume that $\M \subseteq \E$ is a rigid, contravariantly finite subcategory of $\E$ containing all the injective and projective objects, and stable under taking direct sums and summands. In this paper, $\E$ is equipped with the structure of a prefibration category with cofibrant replacements. As a corollary, we show, using the results of Demonet and Liu in \cite{DL}, that the category of finite presentation modules on the costable category $\overline{\M}$ is a localization of $\E$. We also deduce that $\E \to \mathrm{mod}~\overline{\M}$ admits a calculus of fractions up to homotopy. These two corollaries are analogues for exact categories of results of Buan and Marsh in \cite{BM2}, \cite{BM1} (see also \cite{Be}) that hold for triangulated categories.

If $\E$ is a Frobenius exact category, we enhance its structure of prefibration category to the structure of a model category (see the article of Palu in \cite{Palu} for the case of triangulated categories). This last result applies in particular when $\E$ is any of the Hom-finite Frobenius categories appearing in relation to cluster algebras.
\end{abstract}


\textbf{
Keywords: Exact categories, model categories, Frobenius categories, homotopy category.}

\textbf{MSC classification: Primary: 18E10,18E35 ; Secondary: 18G55, 18A32, 18D20.
}

\tableofcontents

\section*{Introduction}

Homotopical algebra was first introduced by Quillen in \cite{Q}. It appears in many subjects, such as geometry (for example the study of $G$-spaces in \cite{DK}, spectra in \cite{BF}) or algebra (the study of commutative rings in \cite{Q2} or simlplicial groups in \cite{Q3}). Its aim was to axiomatize the structure of the category of topological spaces that is relevant for doing homotopy, thus allowing to make use of different categories (such as sSet) and to compare different homotopy theories. This axiomatization not only covers homotopy, but also homological algebra, whence the name of homotopical algebra.

In a seemingly unrelated direction, some subquotients of triangulated categories play a specific role in the theory of additive categorification of cluster algebras.

Let $T$ be a cluster-tilting object in a cluster category $\C$ with shift functor $\Sigma$. Buan, Marsh and Reiten proved in \cite{BMR} that the ideal quotient $\C/(\Sigma T)$ is equivalent to the category $\mathrm{mod} \mathrm{End}_{\C}(T)^{\mathrm{op}}$ of finitely presented modules over the endomorphism algebra of $T$. This result was then generalized by Keller and Reiten in \cite{KR} (to the case of $m$-cluster-tilting objects), Koenig and Zhu in \cite{KZ} (to the case of triangulated categories without any Calabi-Yau assumption) and Iyama and Yoshino in \cite{IY} : If $R$ is a rigid object in a Hom-finite triangulated category with a Serre functor, let $R*\Sigma R$ be the full subcategory of $\C$ whose objects $X$ appear in some triangle $R_1 \to R_0 \to X \to \Sigma R_1$, with $R_1, R_0 \in \mathrm{add} R$. Then the functor $\C(R,-)$ induces an equivalence of categories \[R*\Sigma R/(\Sigma R) \to \mathrm{mod} \mathrm{End}_{\C}(R)^{\mathrm{op}}.\]

In \cite{BM2}, Buan and Marsh proved that $R*\Sigma R/(\Sigma R)$ can be described as a localization of $\C$ (at the class of all morphisms $f$ such that $\C(R,f)$ is an isomorphism in $\mathrm{mod} \mathrm{End}_{\C}(R)^{\mathrm{op}}$). They further proved in \cite{BM1}, that this localization admits a calculus of fractions at the level of the category $\C/(\Sigma R)$.

Finally, Palu in \cite{Palu} showed that there was almost a model structure on $\mathcal{C}$ such that the homotopy category of $\mathcal{C}$ was the localization. To be accurate, he showed that $R*\Sigma R/(\Sigma R)$ was equivalent to the category of cofibrant and fibrant objects up to homotopy.

Our aim in this paper is to prove a similar result for exact categories. In doing so, we provide a conceptual explanation for the existence of the calculus of fractions. Demonet and Liu in \cite{DL} have shown a theorem similar to that of the first result for triangulated categories: Let $\E$ be an exact category with enough projective and injective objects. Let $\M$ be a full subcategory of $\E$ containing the injectives, and contravariantly finite. Let \[\mathrm{pr} \M = \{ X \in \E, \exists M_1,M_0 \in \M, 0 \to M_1 \to M_0 \to X \to 0 \}\] and \[\mho\M = \{ X \in \E, \exists M \in \M, I \in \mathrm{Inj}, 0 \to M \to I \to X \to 0 \}.\] Demonet and Liu proved that there is an equivalence of categories \[\mathrm{pr}\M/\mho M \to \mathrm{mod}~\overline{\M}\] via the functor \[\begin{array}{ccccc}
G & : & \E & \to & \mathrm{Mod} \overline{\M} \\
 & & X & \mapsto & \overline{\E}(-,X)/\overline{\M} \\
\end{array}.\] However, there is no known description of this subquotient as a localization of $\E$. Our aim in this paper is to give one in the case where $\E$ is an exact, then a Frobenius category. We are going to show that, under some technical assumptions the localization of some class of weak equivalences is equivalent to $\mathrm{mod}~\overline{\M}$, and that there exists a (homotopy) calculus of fractions.

More precisely, we show the following results: we start with the particular case of a Frobenius category

\begin{theo}\label{th:modfrob}
Let $\E$ be a Frobenius category. Let $\M$ be a full subcategory of $\E$ containing the injectives, and contravariantly finite and let $\W$ be the class of those morphisms whose image under the functor $G$ is an isomorphism. Then, there exist two classes of morphisms, $\Fib$ and $\Cof$, forming a model structure on $\E$. More precisely, all the objects are fibrant, and an object is cofibrant if and only if it belongs to $\mathrm{pr}\M$.
\end{theo}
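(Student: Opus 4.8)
The plan is to build the model structure by hand, using the characterization of weak equivalences in terms of the functor $G$ and the equivalence $\mathrm{pr}\M/\mho\M \simeq \mathrm{mod}\,\overline{\M}$ of Demonet and Liu. Since we are told all objects are fibrant, the natural guess is $\Fib = $ all morphisms that are admissible epimorphisms (or perhaps all morphisms, if we want a trivial fibration class), and $\Cof = $ the admissible monomorphisms with cokernel in $\mathrm{pr}\M$ — or rather, the class forced by the lifting axioms once $\W$ and $\Fib$ are fixed. Concretely, I would first define $\Cof$ to be those admissible monomorphisms $i : A \to B$ such that the cokernel lies in $\mathrm{pr}\M$ (so that the cofibrant objects, the objects $X$ with $0 \to X$ a cofibration, are exactly the objects of $\mathrm{pr}\M$, as claimed), define $\Fib$ appropriately, and then verify Quillen's axioms MC1–MC5.

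**The first substantive step** is to check that $\W$ satisfies 2-out-of-3 and is closed under retracts. This follows because $\W = G^{-1}(\text{isomorphisms})$ and $G$ is an additive functor: isomorphisms in any category satisfy 2-out-of-3 and are retract-closed, and these properties pull back along any functor. Next I would verify that $(\Cof, \Fib \cap \W)$ and $(\Cof \cap \W, \Fib)$ are weak factorization systems. For the factorizations (MC5), given $f : X \to Y$, I expect to use the contravariant finiteness of $\M$ together with the enough-projectives hypothesis to build, functorially or at least existentially, a factorization $X \rightarrowtail X' \twoheadrightarrow Y$ with $X \rightarrowtail X'$ a cofibration and $X' \twoheadrightarrow Y$ a trivial fibration: the point is that one can enlarge $X$ by an object of $\mathrm{pr}\M$ (using a right $\M$-approximation of an appropriate object) so that the map to $Y$ becomes a weak equivalence, exactly mirroring how in Palu's triangulated setting one uses the triangle $R_1 \to R_0 \to X \to \Sigma R_1$. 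The cofibrant replacement functor promised in the prefibration-category part of the paper (which I am allowed to cite) should give precisely the $(\Cof, \Fib\cap\W)$-factorization when $Y = 0$, and the general case should follow by a gluing/pullback argument.

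**The lifting axioms (MC4)** are where I expect the real work. I need: every cofibration has the left lifting property against every trivial fibration, and every trivial cofibration has the left lifting property against every fibration. The first should be comparatively formal once $\Fib$ is defined as the class having the right lifting property against trivial cofibrations (if I take the Smith-style approach) — but then one owes a proof that trivial fibrations, so defined, coincide with $\Fib \cap \W$, and this is the identification that carries all the content. I would prove it by showing that a fibration which is a weak equivalence is, up to the relevant notion of homotopy, a split epimorphism with kernel on which $G$ vanishes, and that $G$ vanishes exactly on the objects killed in passing to $\mathrm{pr}\M/\mho\M$; the Frobenius hypothesis enters here because it makes the stable category triangulated and lets me replace "admissible mono/epi" arguments by mapping-cone arguments, and because injective = projective means $\mho\M$ and the relevant syzygy constructions are well-behaved. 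The delicate point, and the one I would budget the most effort for, is checking that these classes are genuinely closed under the pushouts/pullbacks and transfinite compositions required — in an exact (not abelian) category pushouts of admissible monos along arbitrary maps need not exist, so I anticipate having to restrict attention to pushouts along admissible monos and to verify that $\mathrm{pr}\M$ is closed under extensions and under the cokernels that arise, which is presumably where the rigidity of $\M$ and the weak idempotent completeness of $\E$ get used.

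**Finally**, to pin down the cofibrant and fibrant objects: "all objects fibrant" will be immediate from the definition of $\Fib$ (e.g. every object admits a unique map to a terminal object, or $0$ is terminal in an additive category and every map to it is an admissible epi), and "$X$ cofibrant $\iff$ $X \in \mathrm{pr}\M$" will follow from the definition of $\Cof$ together with the Demonet–Liu description of $\mathrm{pr}\M$ as exactly the objects admitting a presentation $0 \to M_1 \to M_0 \to X \to 0$ with $M_i \in \M$ — i.e. exactly the objects for which a cofibrant replacement can be taken to be the identity. I would close by remarking that $\mathrm{Ho}(\E) \simeq \mathrm{mod}\,\overline{\M}$ then drops out of Demonet–Liu plus the general fact that the homotopy category is the full subcategory of bifibrant objects modulo homotopy, recovering the exact-category analogue of Palu's theorem.
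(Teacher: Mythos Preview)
Your plan has the right overall shape (verify $\W$ via the functor $G$, then build the two weak factorization systems), but it diverges from the paper in a way that matters, and it contains a genuine gap.

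\textbf{Where the paper goes instead.} The paper does not define $\Cof$ as ``admissible monos with cokernel in $\mathrm{pr}\M$'' nor $\Fib$ as ``all deflations''. It fixes explicit generating sets
\[
J=\{0\to \mho M : M\in\M\},\qquad
I=\{M_0\to X\oplus I_0 : X\in\mathrm{pr}\M\}\cup\{0\to M : M\in\M\},
\]
sets $\Fib=J^\square$, and then applies Hovey's recognition criterion. The substantive work is the identification $I^\square=J^\square\cap\W$ (proved by a direct lifting computation using that kernels of trivial fibrations lie in $\overline{\M}^\perp$) and the two factorizations, the first of which is the trivial $X\to X\oplus\mho M\to Y$ using a $\mho\M$-approximation, and the second of which is the more delicate $X\to A\oplus\mho M_1\to Y$ built from a $\mathrm{pr}\M$-approximation of $Y$. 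Fibrations are then characterized (Lemma~\ref{lem:caracfib}) as deflations whose \emph{cone} lies in $\mho\overline{\M}^\perp$; this is a condition on the mapping cone, not on the kernel, so the Hovey abelian/exact cotorsion-pair template you are implicitly invoking does not apply on the nose.

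\textbf{The gap.} Your proposed $\Fib$ (``all admissible epis'', or ``all morphisms'') is not the right class: with either choice the lifting axioms fail against the $\W$ you have fixed. More seriously, you assume a full model structure is obtained, but it is not: the paper only produces the $(\Cof,\Fib\cap\W)$-factorization for morphisms with \emph{cofibrant domain} (Lemma~\ref{lem:fac2}), and this restriction is essential --- the construction uses a presentation $0\to M_1\to M_0\to X\to 0$ of the domain to build the middle term $A\oplus\mho M_1$. Without that, there is no evident way to manufacture the required inflation, and the paper explicitly flags the resulting structure as only ``almost'' a model category. Your sketch also silently assumes MC1, but a general exact category need not have finite limits and colimits, which is another reason the structure is weak. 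So before your argument could succeed you would need (i) to identify the correct $\Fib$ via the cone condition or the generating set $J$, and (ii) to confront the fact that the second factorization genuinely fails for arbitrary domains.
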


We note however that one of the factorizations only exists for morphisms with cofibrant domain.

These results apply to the following classes of examples:
Geiss, Leclerc and Schröer studied module categories over the preprojective algebras in the Dynkin case in \cite{GLS1}, in the non-Dynkin case in \cite{GLS2} and a nice survey can be found in \cite{GLS3}.

Another class of examples is given by the categories of Cohen-Macaulay modules over simple hypersurface singularities studied by Burban, Iyama, Keller and Reiten in \cite{BIKR} (their stable categories also appear in the article of Buan, Palu and Reiten, \cite{BPR}).

As a consequence, results due to Quillen in \cite{Q} can be applied in order to obtain the following:

\begin{coro}\label{coro:pref}
Let $\E$ be a weakly idempotent complete Frobenius category. Let $\M$ be a subcategory of $\E$ which is rigid and contravariantly finite, containing all the injective objects.

Let $\sim$ be the homotopy relation on $\E$ given by the model structure of Theorem \ref{th:modfrob}. Then two morphisms $f$ and $g$ are homotopic if and only if $f-g$ factorizes through $\mho \M$. Let $\mathrm{Ho}~\E$ be the localization of the quasi-isomorphisms of $\E$ at the class $\W$ of weak equivalences. Then there is an equivalence of categories \[\mathrm{pr}\M/(\mho\M) \simeq \mathrm{Ho}~\E.\]
From the equivalence of categories of Demonet and Liu in \cite{DL}, there is an equivalence of categories $\mathrm{Ho}~\E \to \mathrm{mod}~\overline{\M}$.
\end{coro}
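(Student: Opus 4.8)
The plan is to deduce the corollary by feeding Theorem~\ref{th:modfrob} into Quillen's description of the homotopy category of a model category \cite{Q}, and then composing with the equivalence of Demonet--Liu \cite{DL}.

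First, by Theorem~\ref{th:modfrob} the category $\E$ carries a model structure $(\W,\Fib,\Cof)$ in which every object is fibrant and the cofibrant objects are exactly those of $\mathrm{pr}\M$; moreover $\E$ has a choice of cofibrant replacements $QX\xrightarrow{\sim}X$, which is all one needs from the factorization axiom that is only available for morphisms with cofibrant domain. Quillen's theorem then yields an equivalence between $\mathrm{Ho}\,\E=\E[\W^{-1}]$ and the category whose objects are the cofibrant--fibrant objects and whose morphisms are homotopy classes of morphisms. Since every object is fibrant, the cofibrant--fibrant objects are precisely the objects of $\mathrm{pr}\M$, so $\mathrm{Ho}\,\E\simeq\mathrm{pr}\M/{\sim}$ with $\sim$ the model-categorical homotopy relation. (Equivalently, one may restrict from the start to the full subcategory $\mathrm{pr}\M$, on which the whole list of model axioms holds, and compute its homotopy category.)

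The heart of the matter is therefore to identify $\sim$ with the relation ``$f-g$ factors through $\overline{\mho}\M$''. Since the ambient category is additive, so is the homotopy relation, and it suffices to show for $f\colon X\to Y$ with $X,Y\in\mathrm{pr}\M$ that $f\sim 0$ iff $f$ factors (in $\overline{\E}$) through an object of $\overline{\mho}\M$. For this I would build an explicit good cylinder object for a cofibrant $X$: picking an $\M$-presentation $0\to M_1\xrightarrow{\alpha}M_0\xrightarrow{\beta}X\to 0$ and an injective hull $0\to M_1\xrightarrow{\kappa}I\xrightarrow{\pi}\mho M_1\to 0$, one forms $\mathrm{Cyl}(X)$ from these data (e.g.\ from the pushout of $\alpha$ along $\kappa$, suitably doubled), arranging that $X\oplus X\hookrightarrow\mathrm{Cyl}(X)\xrightarrow{\sim}X$ is a cofibration followed by a weak equivalence and that the comparison morphism out of $\mathrm{Cyl}(X)$ controlling left homotopies factors through $\mho M_1\in\mho\M$; the induced ``connecting morphism'' $X\to\mho M_1$ (obtained by lifting $M_0\to X$ against the injective $I$) plays the role of Palu's connecting map in \cite{Palu}. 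A left homotopy $\mathrm{Cyl}(X)\to Y$ from $f$ to $0$ then exhibits $f$ as a composite $X\to\mho M_1\to Y$, and conversely any composite $X\to N\to Y$ with $N\in\mho\M$ produces such a homotopy, after possibly enlarging the chosen presentation so that $N$ is a summand of $\mho M_1$. Because $X$ and $Y$ are cofibrant--fibrant, left homotopy and right homotopy coincide and are equivalence relations, which gives the displayed description of $\sim$ (and, by the same cylinder/path-object argument applied after cofibrant replacement, for morphisms in $\E$ in general) and hence the equivalence $\mathrm{pr}\M/(\overline{\mho}\M)\simeq\mathrm{Ho}\,\E$.

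Finally, the equivalence $\mathrm{Ho}\,\E\to\mathrm{mod}\,\overline{\M}$ is obtained by composing with the Demonet--Liu equivalence $\mathrm{pr}\M/\mho\M\xrightarrow{\ \sim\ }\mathrm{mod}\,\overline{\M}$ of \cite{DL} (this is where the rigidity and weak idempotent completeness of $\M\subseteq\E$ enter), once one checks that the two quotient categories $\mathrm{pr}\M/(\overline{\mho}\M)$ and $\mathrm{pr}\M/\mho\M$ agree: morphisms factoring through an injective are already zero in $\overline{\E}$, and since $\mho\,\mathrm{Inj}\subseteq\mathrm{Inj}\subseteq\mho\M$ the ideal generated by $\mho\M$ is unchanged in passing from $\E$ to $\overline{\E}$, while $G$ factors through $\overline{\E}$ by construction. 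The main obstacle is the cylinder computation above: one must simultaneously verify that $\mathrm{Cyl}(X)$ really is a cylinder object for the specific model structure of Theorem~\ref{th:modfrob} (in particular that $X\oplus X\to\mathrm{Cyl}(X)$ is a cofibration and $\mathrm{Cyl}(X)\to X$ a weak equivalence in the sense of that theorem) and that the resulting homotopy relation does not depend on the choice of $\M$-presentation or of injective hull; everything else is formal.
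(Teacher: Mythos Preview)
Your overall architecture---Quillen's theorem plus Demonet--Liu---matches the paper, but the two substantive steps are carried out differently.

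\textbf{Identification of the homotopy relation.} You propose to build an explicit good cylinder object $\mathrm{Cyl}(X)$ from an $\M$-presentation of $X$ and an injective hull of $M_1$, and then read off that left homotopies correspond to factorisations through $\mho M_1$. The paper instead works with \emph{path objects} and avoids any explicit construction: since a weak equivalence $Y\to Y'$ is a split monomorphism, every path object for $Y$ is isomorphic to $Y\oplus V$ with $V\in\overline{\M}^\perp$, and conversely any such $V$ gives a path object. A right homotopy $X\to Y\oplus V$ then exhibits $f-g$ as factoring through $V\in\overline{\M}^\perp$, and vice versa. Only afterwards does the paper specialise to cofibrant objects via the lemma $\mathrm{pr}\M\cap\overline{\M}^\perp=\mho\M$ to recover the $\mho\M$ description. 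This path-object argument sidesteps exactly the ``main obstacle'' you flag: there is no choice of presentation or injective hull to keep track of, and the characterisation applies to \emph{all} path objects at once rather than to one preferred cylinder.

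\textbf{The equivalence $\mathrm{Ho}\,\E\simeq\mathrm{pr}\M/{\sim}$.} You invoke Quillen's theorem directly, relying on the remark that cofibrant replacements suffice. The paper is more cautious: because the second factorisation only exists for morphisms with cofibrant domain, it does not cite Quillen's theorem as a black box for the surjectivity of $\E(A,B)\to\mathrm{Ho}\,\E(A,B)$. Instead it passes to the quotient $\E/\{f:Gf=0\}$, checks the hypotheses of Radulescu-Banu's homotopy calculus of right fractions, and uses that every morphism in $\mathrm{Ho}\,\E$ is of the form $f\circ s^{-1}$ with $s\in\W$; lifting along the factorisation then produces a genuine morphism $A\to B$. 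Your route is shorter if one accepts (as the paper's Remark after Theorem~\ref{th:ho} asserts) that Hovey's proof only needs the second factorisation on cofibrant objects; the paper's route has the side benefit of yielding the calculus of fractions as a separate corollary.
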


The previous results on Frobenius exact categories can be seen as (non-trivial) adaptations of the results of Palu in \cite{Palu} that hold for triangulated categories. However, we also study the more general of (non-Frobenius) exact categories. This situation turns out to be more complicated to handle: Indeed, it seems difficult to hope for a model structure in that case. However, they carry a prefibration structure with cofibrant replacements. It is sufficient to prove localization results analogous to those in the Frobenius setting.

\begin{theo}\label{th:pref}
Let $\E$ be a weakly idempotent complete exact category with enough injective and projective objects. Assume that $\M \subseteq \E$ is a rigid, contravariantly finite subcategory of $\E$ containing all the injective objects, and stable under taking direct sums and summands and such that $\mho \M$ is contravariatly finite.

Then there exist two classes of morphisms $\Fib$ and $\W$ (the same weak equivalences as in the Frobenius case) such that $(\E,\Fib,\W)$ has a prefibration structure in the sense of Anderson-Brown-Cisinski (see \cite{R} for a deeper study). All the objects are fibrant, and an object is cofibrant if and only if it belongs to $\mathrm{pr}\M$. Moreover, under the assumption that $\M$ contains the projective objects, there exist some cofibrant replacements for any object of $\E$.
\end{theo}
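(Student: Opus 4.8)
The strategy is to check, for $\W$ as in the statement and for $\Fib$ taken to be the class of all deflations (admissible epimorphisms) of $\E$, the axioms of an Anderson--Brown--Cisinski prefibration category, and then to prove separately the characterization of the cofibrant objects and the existence of cofibrant replacements. The point of working with a prefibration, rather than a fibration, category is precisely that no general factorization axiom is imposed; the absence of such factorizations is exactly what, in the Frobenius case, forces the enlargement to a model category. Several axioms are immediate. Both $\W$ and $\Fib$ contain all isomorphisms and are closed under composition — $\W$ because $G$ is a functor, $\Fib$ because a composite of deflations is a deflation. The weak equivalences satisfy $2$-out-of-$6$ because isomorphisms do in $\mathrm{mod}\,\overline{\M}$ and $G$ is a functor. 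The zero object of $\E$ is terminal, and $X\to 0$ is a deflation for every $X$, so all objects are fibrant.

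The first substantial axiom is stability under base change. Pullbacks along deflations exist in any exact category, and the base change of a deflation is a deflation with an isomorphic kernel, so fibrations are stable under base change for free. For trivial fibrations I would first establish an internal description: a deflation $p\colon A\twoheadrightarrow B$ with kernel $K$ lies in $\W$ if and only if $K$ has vanishing image under $G$. Rigidity of $\M$ is the key input: it gives $\mathrm{Ext}^1(M,K)=0$ whenever $K\in\M$, so the long exact sequence attached to $0\to K\to A\to B\to 0$ collapses to a presentation $G(K)\to G(A)\to G(B)\to 0$ of $\overline{\M}$-modules, and a dual argument handles injectivity; vanishing of $G(K)$ then makes $G(p)$ an isomorphism, and conversely. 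Since base change preserves the kernel of a deflation, this defining condition is manifestly preserved, so trivial fibrations are stable under base change as well.

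Next, the cofibrant objects, where $C$ is declared cofibrant when $0\to C$ has the left lifting property against all trivial fibrations. For $C\in\mathrm{pr}\M$, fix a presentation $0\to M_1\to M_0\to C\to 0$ with $M_0,M_1\in\M$; given a trivial fibration $p\colon A\twoheadrightarrow B$ and a map $C\to B$, lift the composite $M_0\to C\to B$ along $p$ — possible because $\ker p$ is $G$-trivial and, together with rigidity of $\M$, the obstruction in $\mathrm{Ext}^1(M_0,\ker p)$ vanishes — and check that the lift kills $M_1$, hence factors through $C$. Conversely, for cofibrant $C$ one shows $C$ is a retract of an object of $\mathrm{pr}\M$ — I would derive this from the lifting property applied to a suitable deflation with $G$-trivial kernel onto $C$ obtained from a right $\M$-approximation — and concludes $C\in\mathrm{pr}\M$ because $\M$, and hence, via the horseshoe lemma applied using rigidity, $\mathrm{pr}\M$, is closed under direct summands; establishing the converse without invoking enough projectives is the subtle point here.

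Finally, under the additional hypothesis $\mathrm{Proj}\,\E\subseteq\M$, the construction of cofibrant replacements, which I expect to be the main obstacle. The difficulty is to produce for each $X$ a single morphism $QX\to X$ that is at once a deflation (a fibration) and a weak equivalence, with $QX\in\mathrm{pr}\M$; the naive remedy of adjoining a projective cover to cure non-surjectivity would destroy the weak equivalence, since projectives are not $G$-trivial. The construction I propose: choose a right $\M$-approximation of $X$ and enlarge it by a projective so it becomes a deflation $g_0\colon M_0\twoheadrightarrow X$ with $M_0\in\M$ and $\overline{\M}(-,M_0)\twoheadrightarrow G(X)$ still onto; take a finite presentation $\overline{\M}(-,M_1)\xrightarrow{\bar\varphi_*}\overline{\M}(-,M_0)\to G(X)\to 0$, lift $\bar\varphi$ to $\varphi\colon M_1\to M_0$, choose an inflation $\iota\colon M_1\rightarrowtail I$ into an injective, and set $QX=\operatorname{coker}\!\bigl(\binom{\varphi}{\iota}\colon M_1\rightarrowtail M_0\oplus I\bigr)$, which lies in $\mathrm{pr}\M$ because $I\in\mathrm{Inj}\subseteq\M$. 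One extends $g_0$ over the pushout to $q\colon QX\to X$: the required map $I\to X$ exists because $g_0\varphi$ factors through an injective and $\iota$ is an inflation into an injective. Then $q$ is a deflation, since $\E$ is weakly idempotent complete and the deflation $g_0$ factors through it; and $q$ is a weak equivalence because, by rigidity, $G$ applied to $0\to M_1\to M_0\oplus I\to QX\to 0$ is left exact, $G(I)=0$, so $G(QX)=\operatorname{coker}(\bar\varphi_*)\cong G(X)$ compatibly with $q$. Identifying $\mathrm{Ho}\,\E$ and comparing with the Demonet--Liu equivalence would then follow, the delicate step remaining the simultaneous realization of one morphism as both a fibration and a weak equivalence.
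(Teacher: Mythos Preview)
Your proposal diverges from the paper in the choice of fibrations: you take $\Fib$ to be all deflations, whereas the paper sets $\Fib = J^\square$ for $J = \{0 \to \mho M : M \in \M\}$. This difference is not cosmetic, and your choice leads to a genuine gap. The claim that a deflation $p\colon A \twoheadrightarrow B$ with kernel $K$ lies in $\W$ if and only if $G(K) = 0$ is not correct. Using the identification $\overline{\E}(M,-)\cong \mathrm{Ext}^1_\E(\mho M,-)$ (valid since $\E$ has enough injectives), the long exact sequence attached to $0\to K\to A\to B\to 0$ reads
\[
\E(\mho M,B)\to G(K)\to G(A)\xrightarrow{G(p)} G(B)\to \mathrm{Ext}^2_\E(\mho M,K).
\]
Thus $G(K)=0$ forces $G(p)$ to be injective but says nothing about the connecting map into $\mathrm{Ext}^2_\E(\mho M,K)$, which governs surjectivity; conversely, $G(p)$ injective only makes the map $G(K)\to G(A)$ vanish, so $G(K)$ is a quotient of $\E(\mho M,B)$, not necessarily zero. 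Your appeal to rigidity --- ``$\mathrm{Ext}^1(M,K)=0$ whenever $K\in\M$'' --- is beside the point, because $K$ is not assumed to lie in $\M$. With the characterization gone, your base-change argument for trivial fibrations collapses, and so does your argument that objects of $\mathrm{pr}\M$ are cofibrant: the obstruction to lifting $M_0\to B$ along $p$ lives in $\mathrm{Ext}^1(M_0,K)$, which you have not shown to vanish. The paper instead lifts morphisms out of $\M$ via Lemma~\ref{lem:relev}, which crucially uses the $J^\square$-lifting property (lifting against $0\to\mho M$, hence against injectives), not merely that $p$ is a deflation.

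Two further remarks. First, ABC prefibration categories in Radulescu-Banu's sense do carry a factorization axiom (F5: every map with fibrant target factors as a weak equivalence followed by a fibration), contrary to your opening paragraph; the paper handles this via $X\to X\oplus \mho M\to Y$ with $\mho M\to Y$ a right $\mho\M$-approximation (Lemma~\ref{cor:fact}). Second, even with $\Fib=J^\square$ the paper does not obtain an unrestricted prefibration structure: base change of trivial fibrations is proved only under the extra hypothesis that the inflation in the base has $G$-monomorphic image, and the result is explicitly called an ``almost'' prefibration category. Your cofibrant-replacement construction, on the other hand, is essentially the paper's Lemma~\ref{lem:prmapp} and is fine.
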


This theorem is sufficient to prove Corollary \ref{coro:pref} in the case where $\E$ is only equipped with an exact structure.

\begin{coro}
We deduce from this that there is a homotopy calculus of fractions in the sense of Radulescu-Banu in \cite{R}.
\end{coro}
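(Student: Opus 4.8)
The plan is to feed the prefibration structure produced by Theorem~\ref{th:pref} into the fraction machinery developed in \cite{R}. Recall that, by that theorem, $(\E,\Fib,\W)$ is an ABC prefibration category: $\W$ has the two-out-of-three property, every object is fibrant, $\Fib$ and $\Fib\cap\W$ are stable under the pullbacks that exist, and — using the standing hypothesis that $\M$ contains the projective objects — every $X\in\E$ admits a cofibrant replacement $QX\xrightarrow{\ \sim\ }X$ with $QX\in\mathrm{pr}\M$. Since all objects are fibrant, $QX$ is in fact bifibrant, and the homotopy relation on maps between such objects is the one made explicit in Corollary~\ref{coro:pref} (the difference factors through $\overline{\mho}\M$), hence additive and in particular a congruence compatible with composition. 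These are exactly the data Radulescu-Banu requires.

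First I would quote from \cite{R} the description of $\mathrm{Ho}\,\E=\E[\W^{-1}]$ attached to a prefibration category with cofibrant replacements: it is equivalent to the category with the same objects in which a morphism $X\to Y$ is a homotopy class of spans
\[
X \xleftarrow{\ \sim\ } X' \xrightarrow{\ s\ } Y,
\]
with $X'\to X$ in $\Fib\cap\W$ (one may always take $X'=QX\in\mathrm{pr}\M$), two spans being identified when they fit into a span of spans commuting up to homotopy, and composition being defined by pulling back the left leg of the second span along the right leg of the first — legitimate precisely because $\Fib$ and $\Fib\cap\W$ are pullback-stable. That the category so constructed is genuinely the localization is the corresponding theorem of \cite{R}, whose hypotheses we have just checked; this is by definition a (right) homotopy calculus of fractions for $\E\to\mathrm{Ho}\,\E$. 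Composing with the equivalences $\mathrm{Ho}\,\E\simeq\mathrm{pr}\M/\overline{\mho}\M\simeq\mathrm{mod}\,\overline{\M}$ of Corollary~\ref{coro:pref} and \cite{DL} then yields the calculus of fractions up to homotopy for $\E\to\mathrm{mod}\,\overline{\M}$ announced in the abstract.

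The one delicate point is that Theorem~\ref{th:pref} supplies the factorization "trivial cofibration followed by fibration" only for morphisms whose domain is cofibrant. I expect this to be harmless here: in the fraction calculus one only ever factors maps out of cofibrant objects — after replacing the source $X$ by $QX\in\mathrm{pr}\M$ — and the prefibration axioms of \cite{R}, as opposed to the full fibration-category axioms, are designed exactly to tolerate this asymmetry (this is why one gets only a \emph{pre}fibration structure in the exact, non-Frobenius case). So the main work is bookkeeping: matching our classes $\Fib$, $\W$ and the homotopy relation $\sim$ to Radulescu-Banu's axiom list and verifying that his notion of "cofibrant replacement functor" is met by the replacements coming from contravariant finiteness of $\M$. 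Once that dictionary is in place, the corollary is a citation of \cite{R}.
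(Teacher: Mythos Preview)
Your outline assumes that Theorem~\ref{th:pref} gives a genuine ABC prefibration category, and in particular that $\Fib\cap\W$ is stable under arbitrary base change. But if you look at the theorem actually proved in Section~\ref{sec:pref}, axiom (iii) is weakened: the pullback of a trivial fibration $f:X\to Y$ along $p:B\to Y$ is only shown to be a trivial fibration when $p$ sits in a short exact sequence $0\to A\xrightarrow{i} B\xrightarrow{p} Y\to 0$ with $Gi$ a monomorphism. So you cannot simply quote the general fraction machinery of \cite{R} for prefibration categories; the hypothesis you need is precisely the one that is missing. Note also that the ``delicate point'' you isolate is the wrong one: the factorization $^\square(J^\square)$ followed by $J^\square$ (which is the one relevant for the prefibration axiom F5) holds for \emph{all} morphisms by Lemma~\ref{cor:fact}; it is the \emph{other} factorization, irrelevant here, that is restricted to cofibrant domain.

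The paper's route is different and bypasses this problem. Rather than invoking the abstract prefibration theory, it passes to the quotient $\E/\{f:Gf=0\}$ and checks by hand the three hypotheses of Theorem~6.4.2 of \cite{R} for the pair $(\E/\{f:Gf=0\},\underline{\W})$ (this is Step~2 in the proof of Lemma~\ref{lem:iso}). The square required by the second hypothesis is \emph{not} produced as a categorical pullback: one factors $a:A'\to A$ as $^\square(J^\square)$ followed by $J^\square$, takes a $\mathrm{pr}\M$-approximation $B'\to B$, and then lifts $B'\to A$ along the trivial fibration using cofibrancy of $B'$. The third (coequalizing) hypothesis holds trivially because one is already working modulo $\{f:Gf=0\}=\overline{\M}^\perp$. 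So the paper obtains the right-fraction description directly from Theorem~6.4.2, without ever needing unrestricted pullback stability of trivial fibrations. Your plan would work in the Frobenius case, where the model structure is complete, but in the exact case you need to follow this more hands-on argument.
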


The latter result gives a theoretical interpretation to the results of Buan and Marsh in \cite{BM1} concerning the calculus of fractions.

The paper is organised as follows: In the first part, we get interested in the exact case. Section $1$ is devoted to the study of $\mathrm{pr}\M$. Then in section $2$, we focus on weak equivalences and fibrations. In section $3$ we give explicit factorizations. Section $4$ is devoted to the cofibrant replacements and the homotopy relation. The last two sections of this part consist in the proof of the prefibration structure (Theorem \ref{th:pref}) and the theorem of Quillen (\ref{coro:pref}).

In the second part, we consider the particular case of Frobenius categories. In section $1$, we give additionnal properties of $\mathrm{pr}\M$ that hold in this more specific setup. Then, in section $2$, we show a caracterization of weak equivalences. Section $3$ deals with fibrations and cofibrations, and in section $4$, we find the second factorization. Finally, in section $5$, we show the model structure on the Frobenius category.

\section{Preliminaries}

\subsection{Conventions and results on exact categories}

In this section, we choose as a reference the article of Bulher \cite{B}.

\begin{defi}
Let $\E$ be an additive category. A short exact sequence $(i, p)$ in $\E$ is a pair of composable morphisms $\xymatrix{A \ar^i[r] & A' \ar^p[r] & A''}$
such that $i$ is a kernel of $p$ and $p$ is a cokernel of $i$. If a class $C$ of short exact sequences on $\E$ is fixed, an inflation is a morphism $i$ for which there exists a morphism $p$ such that $(i, p) \in C$. Deflations are defined dually.

An exact structure on $\E$ is a class $C$ of short exact sequences, which is closed under isomorphisms and satisfies the following axioms:
\begin{description}
\item[E0] The identity morphism is an inflation.
\item[E0'] The identity morphism is a deflation.
\item[E1] The class of inflations is closed under composition.
\item[E1'] The class of deflation is closed under composition.
\item[E2] The push-out of an inflation along an arbitrary morphism exists and yields
an inflation.
\item[E2'] The pull-back of a deflation along an arbitrary morphism exists and yields
a deflation.
\end{description}
\end{defi}

\begin{defi}
We say that $\E$ has enough injective objects if any object $A$ of $\E$ appears in a short exact sequence $A \to I \to B$, where $I$ is injective. Dually, we define what "having enough projective" means.

It is said that $\E$ is a Frobenius category if there are enough projective and injective objects, and if they coincide.
\end{defi}

\begin{lem}[Buhler, \cite{B}]
Let \[ \xymatrix{A~ \ar@{>->}^f[r] \ar_a[d] & B \ar^b[d] \\
C~ \ar@{>->}_g[r] & D}\] be a commutative square, where the horizontal arrows are inflations. The following assertions are equivalent:
\begin{itemize}
\item The square is a pushout
\item The sequence $A \to B \oplus C \to D$ is short exact
\item The square is bicartesian, that is, a pushout and a pullback
\end{itemize}
\end{lem}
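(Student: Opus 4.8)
The plan is to establish the cycle of implications $(1)\Rightarrow(2)\Rightarrow(3)\Rightarrow(1)$, of which $(3)\Rightarrow(1)$ is immediate since ``bicartesian'' includes ``pushout''. Throughout I would write $\iota=\binom{f}{-a}\colon A\to B\oplus C$ and $\pi=(b,g)\colon B\oplus C\to D$, so that the commutativity of the given square is precisely the equation $\pi\iota=bf-ga=0$, and conditions $(2)$ and $(3)$ become statements about this pair $(\iota,\pi)$.

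For $(1)\Rightarrow(2)$, the first point to check is that $\iota$ is an inflation. It factors as $A\xrightarrow{\binom{1_A}{-a}}A\oplus C\xrightarrow{f\oplus 1_C}B\oplus C$, where $\binom{1_A}{-a}$ is a split inflation (the row $(1_A,0)$ is a retraction) and $f\oplus 1_C$ is a direct sum of the inflations $f$ and $1_C$, hence an inflation — one sees this from E2 by identifying $B\oplus C$ with the pushout of $f$ along $\binom{1_A}{0}$. So $\iota$ is an inflation by E1, and in particular it admits a cokernel $q\colon B\oplus C\to Q$. I would then compare $Q$ with $D$: from $\pi\iota=0$ one gets a unique $u\colon Q\to D$ with $uq=\pi$, while $q\iota=0$ says the two composites $A\xrightarrow{f}B\hookrightarrow B\oplus C\xrightarrow{q}Q$ and $A\xrightarrow{a}C\hookrightarrow B\oplus C\xrightarrow{q}Q$ agree, so the pushout property $(1)$ of $D$ produces a map $v\colon D\to Q$ compatible with the structure morphisms $b,g$. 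Uniqueness in the universal property of the cokernel $q$ and in that of the pushout $D$ then forces $uv=1_D$ and $vu=1_Q$ (here one uses that deflations are epimorphisms). Hence $\pi$ is, up to this isomorphism, a cokernel of the inflation $\iota$, which is exactly the assertion that $A\xrightarrow{\iota}B\oplus C\xrightarrow{\pi}D$ is short exact; as a by-product $g$ is an inflation, either directly or again via E2 and uniqueness of pushouts.

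For $(2)\Rightarrow(3)$, everything is formal: in the short exact sequence $A\xrightarrow{\iota}B\oplus C\xrightarrow{\pi}D$ the map $\pi$ is a cokernel of $\iota$ and $\iota$ is a kernel of $\pi$. Given any $X$ with $b'\colon B\to X$, $g'\colon C\to X$ and $b'f=g'a$, the morphism $(b',g')\colon B\oplus C\to X$ kills $\iota$, hence factors uniquely through $\pi$, which yields the unique $h\colon D\to X$ with $hb=b'$ and $hg=g'$: this is the pushout property. The dual diagram chase, using instead that $\iota$ is a kernel of $\pi$, shows the square is also a pullback. Hence it is bicartesian. (The same formal argument also gives $(2)\Rightarrow(1)$ directly, so the logical layout is flexible.)

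I expect the only genuine difficulty to lie in $(1)\Rightarrow(2)$: recognizing $\binom{f}{-a}$ as an inflation — which is what makes ``$A\to B\oplus C\to D$ is short exact'' a meaningful comparison at all — and then checking that the two comparison maps $u$ and $v$ between the abstract cokernel $Q$ and $D$ are mutually inverse. This last point is just a matter of applying each universal property carefully and keeping the signs in $\binom{f}{-a}$ and $(b,g)$ consistent; the rest is bookkeeping with the exact-category axioms E1 and E2.
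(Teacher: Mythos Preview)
Your proof is correct. The paper itself does not prove this lemma: it is simply quoted from B\"uhler's survey \cite{B} (his Proposition~2.12) without argument, so there is nothing to compare against here. Your cycle $(1)\Rightarrow(2)\Rightarrow(3)\Rightarrow(1)$ is the standard route, and the one nontrivial step---showing that $\binom{f}{-a}$ is an inflation via the factorisation through $A\oplus C$ and E1/E2---is exactly how B\"uhler handles it.
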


\begin{lem}[Demonet-Liu, \cite{DL}]
If $0 \to X \to Y \to Z \to 0$ and $0 \to Y \to W \to V \to 0$ are two short exact sequences, then there is a commutative diagram of short exact sequences where the upper-right square is both a push-out and a pull-back:
\[ \xymatrix{ & & 0 \ar[d] & 0 \ar[d] & \\
0 \ar[r] & X \ar@{=}[d] \ar[r] & Y \ar[r] \ar[d] & Z \ar[d] \ar[r] & 0 \\
0 \ar[r] & X \ar[r] & W \ar[r] \ar[d] & U \ar[d] \ar[r] & 0 \\
& & V \ar@{=}[r] \ar[d] & V \ar[d] & \\
& & 0 & 0 &} \]
\end{lem}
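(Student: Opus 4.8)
The plan is to build $U$ in one stroke, as the pushout of the inflation $j$ along the deflation $p$, and then to check that this single square already carries all the rows and columns of the diagram. Write the two hypotheses as conflations $X\overset{i}{\rightarrowtail}Y\overset{p}{\twoheadrightarrow}Z$ and $Y\overset{j}{\rightarrowtail}W\overset{q}{\twoheadrightarrow}V$. First I would form the pushout of $j$ along $p$: by axiom \textbf{E2} it exists and the induced map $k\colon Z\to U$ is again an inflation, so by the pushout lemma of Bühler quoted above the square
\[\xymatrix{Y \ar@{>->}^{j}[r] \ar@{->>}_{p}[d] & W \ar^{l}[d] \\ Z \ar@{>->}_{k}[r] & U}\]
is bicartesian and $Y\xrightarrow{\binom{j}{-p}}W\oplus Z\xrightarrow{(l,\,k)}U$ is a conflation. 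This is exactly the upper-right square of the asserted diagram, already known to be both a push-out and a pull-back.

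Next I would read off the two new short exact sequences from this square. For the third column: in a pushout square the cokernels of the two parallel inflations are canonically isomorphic, so the map $\bar q\colon U\to V$ determined by $\bar q l=q$, $\bar q k=0$ is a cokernel of $k$; since $k$ is an inflation, $Z\overset{k}{\rightarrowtail}U\overset{\bar q}{\twoheadrightarrow}V$ is then a conflation, and $\bar q l=q$ is precisely the commutativity of the lower-right square. For the middle row: $ji\colon X\to W$ is an inflation by \textbf{E1}, and I claim $l$ is a cokernel of it. Indeed $l(ji)=(kp)i=k(pi)=0$; and if $g\colon W\to T$ kills $ji$, then $gj$ kills $i=\ker p$, hence factors as $hp$, so the pair $(g,h)$ is a cocone under the span $W\xleftarrow{j}Y\xrightarrow{p}Z$ and factors uniquely through $U$, giving $g=\psi l$; as $l$ is epi (being the pushout of the epimorphism $p$) this $\psi$ is unique. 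Hence $X\overset{ji}{\rightarrowtail}W\overset{l}{\twoheadrightarrow}U$ is a conflation, the middle row.

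It remains to glue the two given conflations, the pushout square, the column $Z\rightarrowtail U\twoheadrightarrow V$ and the row $X\rightarrowtail W\twoheadrightarrow U$ into the displayed $3\times 3$ diagram; every square commutes essentially by construction ($ji=j\circ i$ on the left, $lj=kp$ upper right, $\bar q l=q$ lower right). I expect no serious obstacle here: the only delicate point is the threefold identification of $U$ — as the pushout, as $\operatorname{coker}(ji)$, and as the object sitting in $Z\rightarrowtail U\twoheadrightarrow V$ — but each of these is a one-line argument from the universal property of the pushout together with the fact that deflations are cokernels of their kernels; alternatively the middle row and the third column can be extracted from Bühler's $3\times 3$-type lemmas. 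Note that no use is made of enough injective or projective objects, or of weak idempotent completeness: the statement holds in any exact category.
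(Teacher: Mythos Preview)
The paper does not give its own proof of this lemma: it is quoted as a result of Demonet and Liu \cite{DL} in the preliminaries, with no argument supplied. So there is no ``paper's proof'' to compare against here.

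That said, your argument is correct and is the standard way one proves this in an exact category. A couple of small remarks. First, when you invoke Bühler's pushout lemma, note that the version quoted in the paper is stated for squares whose \emph{horizontal} arrows are inflations; in your square those are $j$ and $k$, so the lemma applies as written and gives bicartesianness together with the conflation $Y\rightarrowtail W\oplus Z\twoheadrightarrow U$. Second, in the step where you show $X\overset{ji}{\rightarrowtail}W\overset{l}{\twoheadrightarrow}U$ is a conflation, you verify that $l$ is a cokernel of $ji$; strictly speaking one then needs that this kernel--cokernel pair lies in the exact structure, but this follows immediately since $ji$ is already an inflation (by \textbf{E1}) and any cokernel of an inflation is a deflation. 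Your closing remark that neither injectives/projectives nor weak idempotent completeness are used is also correct: this is a statement about arbitrary exact categories.
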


\subsection{Weakly idempotent complete categories}
In this part, we explain the notion of a weakly idempotent complete category, a concept we need all throughout the paper.

\begin{defi}
Let $\mathcal{C}$ be an additive category. Then $\mathcal{C}$ is said to be weakly idempotent complete when any retraction has a kernel (which is equivalent to the fact that any section has a cokernel).
\end{defi}

We also have the following characterization:

\begin{theo}[\cite{B}]
If $\mathcal{C}$ is exact, then the category $\mathcal{C}$ is weakly idempotent complete if and only if any retraction is a deflation (or equivalently, any section is an inflation).
\end{theo}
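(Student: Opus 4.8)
The plan is to prove the two implications separately, the forward one being essentially a diagram chase once the right objects are in play. Throughout, write a \emph{retraction} for a morphism admitting a right inverse (a \emph{section}, equivalently \emph{coretraction}), and recall that by definition a deflation sits in a short exact sequence and hence has a kernel.

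First I would dispose of the direction ``every retraction is a deflation $\Rightarrow$ $\mathcal{C}$ weakly idempotent complete'': if $r$ is a retraction and, by hypothesis, a deflation, then the short exact sequence witnessing this exhibits a kernel of $r$; so every retraction has a kernel, which is exactly weak idempotent completeness. The parenthetical equivalence ``any section is an inflation'' is then just the statement obtained by passing to $\mathcal{C}^{\mathrm{op}}$, using that the opposite of an exact category is exact and that the condition defining weak idempotent completeness is self-dual (retractions becoming sections, kernels becoming cokernels).

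For the substantial direction, assume $\mathcal{C}$ is weakly idempotent complete and let $r\colon B\to C$ be a retraction with section $i\colon C\to B$, so $ri=\mathrm{id}_C$. Since $r$ is a retraction, weak idempotent completeness gives a kernel $k\colon A\to B$ of $r$; in particular $k$ is a monomorphism and $rk=0$. The endomorphism $\mathrm{id}_B-ir$ of $B$ (which is idempotent) satisfies $r(\mathrm{id}_B-ir)=r-r=0$, so it factors through the kernel $k$: there is $p\colon B\to A$ with $kp=\mathrm{id}_B-ir$. Using that $k$ is monic I would then read off the identities $pk=\mathrm{id}_A$ (from $k(pk)=(\mathrm{id}_B-ir)k=k-i(rk)=k$) and $pi=0$ (from $k(pi)=(\mathrm{id}_B-ir)i=i-i(ri)=0$), together with the tautology $kp+ir=\mathrm{id}_B$ and the relations $rk=0$, $ri=\mathrm{id}_C$.

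These relations say precisely that $\begin{pmatrix} p\\ r\end{pmatrix}\colon B\to A\oplus C$ and $\begin{pmatrix} k & i\end{pmatrix}\colon A\oplus C\to B$ are mutually inverse isomorphisms, and that under this identification $k$ becomes the canonical inclusion $A\to A\oplus C$ and $r$ becomes the canonical projection $A\oplus C\to C$. Since the split sequence $A\to A\oplus C\to C$ is short exact and the class of short exact sequences is closed under isomorphism, $A\xrightarrow{k}B\xrightarrow{r}C$ is short exact; hence $r$ is a deflation. Applying this to $\mathcal{C}^{\mathrm{op}}$ gives that every section is an inflation.

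The only genuine idea here is recognizing that weak idempotent completeness should be used to split the kernel of $r$ off as a direct summand of $B$; after that, the main point requiring care is the factorization $kp=\mathrm{id}_B-ir$ and the deduction of $pk=\mathrm{id}_A$, which relies on $k$ being an honest kernel (hence monic) and on additivity of $\mathcal{C}$ so that the idempotent $ir$ makes sense. None of it is deep — the content of the theorem is the clean equivalence itself, and this is the classical argument from Bühler's \cite{B}.
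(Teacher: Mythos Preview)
Your proof is correct and is exactly the classical argument from B\"uhler's paper. Note, however, that the present paper does not give its own proof of this statement: it is quoted as a result from \cite{B}, so there is nothing to compare against beyond the original source, with which your argument agrees.
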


\begin{coro}[\cite{B}]
If $\mathcal{C}$ is exact, then the category $\mathcal{C}$ is weakly idempotent complete if and only if:

for two composable morphisms $f$ and $g$, if $g \circ f$ is a deflation, then so is $g$.
\end{coro}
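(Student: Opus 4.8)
The two implications are of very different natures, and I would handle them separately.

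For the converse implication, suppose the stated cancellation property holds. Given a retraction $r$ with section $s$, the composite $rs$ is the identity, hence a deflation by axiom E0'; applying the hypothesis to the pair $(s,r)$ shows that $r$ itself is a deflation. Thus every retraction is a deflation, and by the characterization of weak idempotent completeness recalled above, $\mathcal{C}$ is weakly idempotent complete.

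For the direct implication, assume $\mathcal{C}$ is weakly idempotent complete and let $f\colon X\to Y$, $g\colon Y\to Z$ be composable with $p:=gf$ a deflation. I would first form the pullback of the deflation $p$ along $g$, which exists by axiom E2'; write $\pi\colon P\to Y$ for the pulled-back deflation and $q\colon P\to X$ for the other projection, so that $g\pi=pq$. The crucial observation is that $q$ is a split epimorphism: the morphism $X\to P$ induced by the pair $(f,\mathrm{id}_X)$ — legitimate since $p\,\mathrm{id}_X=gf$ — is a section of $q$. Weak idempotent completeness then forces $q$ to be a deflation, so in particular $q$ admits a kernel $\kappa\colon\bar K\rightarrowtail P$. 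Next I would check that $\pi\kappa\colon\bar K\to Y$ is a kernel of $g$: it is annihilated by $g$ because $g\pi\kappa=pq\kappa=0$, and whenever $gt=0$ the pair $(t,0)$ lifts uniquely through the pullback, that lift factors through $\kappa$ because its $q$-component vanishes, and the uniqueness of the resulting factorization follows since $\kappa$ is monic. Hence $g$ has a kernel; and since the deflation $p=gf$ factors through $g$, the dual form of Quillen's obscure axiom — which holds in every exact category, see \cite{B} — lets me conclude that $g$ is a deflation.

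The one real obstacle is that, a priori, $g$ need not have a kernel, so the obscure axiom cannot be invoked directly; the whole force of the hypothesis goes into manufacturing this kernel, by transporting along $\pi$ the kernel of the split epimorphism $q$ — an object that exists precisely because, in a weakly idempotent complete exact category, retractions are deflations. The remaining steps (existence of the pullback, the universal property of $\pi\kappa$, and the citation of the obscure axiom) are routine.
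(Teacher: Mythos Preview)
Your proof is correct. The paper does not supply its own argument for this corollary---it is quoted directly from B\"uhler \cite{B}---so there is nothing in the paper to compare against; your reconstruction (pull back the deflation $gf$ along $g$, observe that the projection to $X$ is split epi and hence a deflation by the preceding theorem, transport its kernel along $\pi$ to obtain a kernel for $g$, then invoke the dual obscure axiom) is precisely the standard proof one finds in B\"uhler's survey.
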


We will freely make use of this latter characterization throughout the paper.

\subsection{An equivalence of categories}
We recall the definition of a rigid subcategory.
\begin{defi}
Let $\E$ be an exact category. Then, a subcategory $\M$ of $\E$ is said to be rigid if $\mathrm{Ext}^1_{\E}(\M,\M)$ is zero.
\end{defi}

\begin{defi}
We call by $\mathrm{Mod} ~\overline{\M}$ the category of modules over $\M$. We also call by $\mathrm{mod} ~\overline{\M}$ the category of modules over $\M$ of finite type.
\end{defi}

Demonet and Liu showed in \cite{DL} that there is an equivalence of category that we will try and describe as a localization.
\begin{theo}[\cite{DL}]
Let $\mathcal{E}$ be an exact category. Let $\M$ be a full rigid subcategory which contains the injective objects. Let \[ \mathrm{pr} \M = \{ X \in \E, \exists M_1,M_0 \in \M, 0 \to M_1 \to M_0 \to X \to 0 \} \] and
\[ \mho\M = \{ X \in \E, \exists M \in \M, I \in \mathrm{Inj}, 0 \to M \to I \to X \to 0 \}. \] Let $G$ be the following functor:
\[\begin{array}{ccccc}
G & : & \E & \to & c \\
 & & X & \mapsto & \overline{\E}(-,X)|\overline{\M} \\
\end{array}\]
Then, the functor $G$ induces the following equivalence:
\[\mathrm{pr} \M/\mho \M \simeq \mathrm{mod} ~\overline{\M}. \]
\end{theo}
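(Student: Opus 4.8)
The plan is to analyse the functor $G$ directly and to establish, in this order, that $G$ annihilates $\mho\M$, that $G$ carries $\mathrm{pr}\M$ into $\mathrm{mod}\,\overline\M$, and that the induced functor $\overline G\colon\mathrm{pr}\M/\mho\M\to\mathrm{mod}\,\overline\M$ is dense, full and faithful. Two consequences of the hypotheses do all the work: rigidity gives $\mathrm{Ext}^1_\E(\M,\M)=0$, and since $\mathrm{Inj}\subseteq\M$ this forces $\mathrm{Ext}^1_\E(\mathrm{Inj},\M)=0$ as well; moreover $\mathrm{Inj}\subseteq\M$ together with ``enough injectives'' lets one embed any object of $\M$ into an injective object that again lies in $\M$. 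Note also that $\mathrm{Inj}\subseteq\mho\M$ (via $0\to 0\to I\to I\to 0$) and $\mho\M\subseteq\mathrm{pr}\M$, both immediate from $\mathrm{Inj}\subseteq\M$, so the quotient $\mathrm{pr}\M/\mho\M$ is meaningful.

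First I would check that $G$ is additive and that $G(X)=0$ for $X\in\mho\M$: if $0\to M\to I\to X\to 0$ with $I$ injective and $M\in\M$, then applying $\E(N,-)$ for $N\in\M$ and using $\mathrm{Ext}^1_\E(N,M)=0$ shows every morphism $N\to X$ lifts along $I\to X$, hence vanishes in $\overline\E(N,X)$; consequently $G$ kills every morphism factoring through an object of $\mho\M$. Next, for $X\in\mathrm{pr}\M$ with conflation $0\to M_1\xrightarrow{f}M_0\xrightarrow{g}X\to 0$, $M_i\in\M$, I would show that $\overline\M(-,M_1)\xrightarrow{\bar f_*}\overline\M(-,M_0)\xrightarrow{\bar g_*}G(X)\to 0$ is a projective presentation: $\bar g_*$ is onto and $\bar g_*\bar f_*=0$ by applying $\E(N,-)$ and using $\mathrm{Ext}^1_\E(N,M_1)=0$; for $\ker\bar g_*\subseteq\operatorname{im}\bar f_*$, given $\psi\colon N\to M_0$ with $g\psi=\beta\alpha$ factoring through an injective $I$, one pulls $g$ back along $\beta\colon I\to X$, splits the conflation $0\to M_1\to P\to I\to 0$ using $\mathrm{Ext}^1_\E(I,M_1)=0$ (here $\mathrm{Inj}\subseteq\M$ is essential), lifts $\beta$ to $\beta''\colon I\to M_0$, and writes $\psi=f\gamma+\beta''\alpha$ with $\beta''\alpha$ factoring through $I$, so $\bar\psi\in\operatorname{im}\bar f_*$. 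Thus $G$ takes values in $\mathrm{mod}\,\overline\M$ and induces $\overline G$.

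Density is then easy: a finite presentation $\overline\M(-,M_1)\xrightarrow{\bar\varphi_*}\overline\M(-,M_0)\to F\to 0$ lifts to $\varphi\colon M_1\to M_0$ in $\M$, and replacing $\varphi$ by $\binom{\varphi}{\iota}\colon M_1\to M_0\oplus I_1$ with $\iota\colon M_1\hookrightarrow I_1$ injective turns it into an inflation — by the bicartesian-square lemma recalled above, applied to the pushout of $\iota$ along $\varphi$ — without changing the induced map $\overline\M(-,M_1)\to\overline\M(-,M_0\oplus I_1)=\overline\M(-,M_0)$, so $X:=\operatorname{coker}\binom{\varphi}{\iota}\in\mathrm{pr}\M$ satisfies $G(X)\cong F$. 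For fullness, a morphism $\theta\colon G(X)\to G(Y)$ lifts through the two projective presentations (representable $\overline\M$-modules are projective) to $\bar a\colon M_0\to N_0$ and $\bar b\colon M_1\to N_1$ in $\overline\M$ with $\overline{f'}\,\bar b=\bar a\,\bar f$; lifting $\bar a,\bar b$ to $a,b$ in $\E$, the defect $f'b-af$ factors through an injective $J$ as $\delta c_0$, which one absorbs into the presentation of $Y$ by replacing $N_1\xrightarrow{f'}N_0$ with $N_1\oplus J\xrightarrow{\left(\begin{smallmatrix}f'&-\delta\\0&1_J\end{smallmatrix}\right)}N_0\oplus J$ (same cokernel $Y$, same $G$), and then injectivity of $J$ promotes the square to an honest commuting square of conflations whose induced map $h\colon X\to Y$ on cokernels satisfies $G(h)=\theta$.

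The main obstacle is faithfulness: turning $G(h)=0$ into an actual factorization of $h\colon X\to Y$ through an object of $\mho\M$. I would prove the stronger claim that any $p\colon X\to Y$ for which $pg$ factors through an injective (true for $p=h$, by evaluating $G(h)=0$ at $N=M_0$ on $\bar g\in\overline\E(M_0,X)$) already factors through $\mho\M$. Write $pg=\beta\alpha$ with $\alpha\colon M_0\to I$, $I$ injective; fix $c\colon M_0\to I_1$ with $cf=\iota$ (injectivity of $I_1$, where $\iota\colon M_1\hookrightarrow I_1$) and $\hat\alpha\colon I_1\to I$ with $\hat\alpha\iota=\alpha f$ (injectivity of $I$), and set $\mho M_1:=\operatorname{coker}\iota$ with quotient map $\rho\colon I_1\to\mho M_1$. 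Then $\alpha-\hat\alpha c$ kills $f$, hence equals $\alpha'g$ for some $\alpha'\colon X\to I$; while $\beta\hat\alpha$ kills $\iota$ (because $\beta\alpha f=pgf=0$), hence equals $\gamma\rho$ for some $\gamma\colon\mho M_1\to Y$; feeding these into $pg=\beta\alpha$ and cancelling the epimorphism $g$ gives $p=\beta\alpha'+\gamma\bar c$, where $\bar c\colon X\to\mho M_1$ is the map induced by $\rho c$. The first summand factors through the injective $I\in\mho\M$, the second through $\mho M_1\in\mho\M$, so $p$ factors through $I\oplus\mho M_1$, which lies in $\mho\M$ since $I\oplus I_1$ is injective and $0\oplus M_1=M_1\in\M$. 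Hence $\overline G$ is faithful, and combined with density and fullness this yields the equivalence. Apart from faithfulness, the only delicate point is the bookkeeping that the ``fattened'' presentations used in density and fullness do not alter $G$ — this is routine, using that $\M$ contains all injectives and is closed under the finite direct sums involved.
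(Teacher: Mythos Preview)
The paper does not give its own proof of this statement: it is quoted verbatim from Demonet--Liu \cite{DL} in the preliminaries and used thereafter as a black box, so there is no argument in the paper to compare against. Your proof is essentially the natural one (and, as far as I can tell, close in spirit to what Demonet and Liu do): show $G$ kills $\mho\M$, produce a projective presentation of $G(X)$ from a conflation $M_1\rightarrowtail M_0\twoheadrightarrow X$, and then verify density, fullness and faithfulness of the induced functor. The argument is sound; the faithfulness step, which you rightly flag as the crux, is carried out correctly --- the decomposition $p=\beta\alpha'+\gamma\bar c$ via the auxiliary maps $c,\hat\alpha$ is exactly what is needed, and $I\oplus\mho M_1\in\mho\M$ follows from the conflation $M_1\rightarrowtail I_1\oplus I\twoheadrightarrow \mho M_1\oplus I$.

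Two minor remarks. First, you silently use that $\E$ has enough injectives (to embed $M_1$ into $I_1$) and that $\M$ is closed under finite direct sums (so that $M_0\oplus I_1\in\M$ in the density step); these are part of the paper's standing hypotheses but are not listed in the theorem statement as quoted, so it would be cleaner to state them explicitly. Second, in the fullness step you could avoid the ``fattened'' presentation altogether: once $f'b-af$ factors through an injective $J$ as $\delta c_0$, extend $c_0\colon M_1\to J$ along the inflation $f$ to some $c_0'\colon M_0\to J$, and replace $a$ by $a-\delta c_0'$; then the square commutes on the nose and the induced map on cokernels still represents $\theta$ since $\delta c_0'$ factors through an injective. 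This is cosmetically simpler but equivalent to what you wrote.
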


\subsection{Model categories}
In this part, we recall the definition of a model category due to Quillen in \cite{Q}. For a gentle introduction to model categories, see Dwyer and Spalinski in \cite{DS}. For a deeper study, see Hovey in \cite{Ho}.
\begin{defi}
Let $\mathcal{C}$ be a category equipped with a class W of morphisms called weak-equivalences. It is said that there is a model structure on $\mathcal{C}$ when there exist three classes of morphisms called $(\W, \Fib, \Cof)$ such that:
\begin{description}
\item[MC1] Finite limits and colimits exist in $\mathcal{C}$.
\item[MC2] $\W$ has the "two-out-of-three" property, meaning that, for two composable maps $g$ and $f$, if two of the three maps $f$, $g$ or $gf$ are in $\W$, so is the third.
\item[MC3] The three classes $\Fib$, $\Cof$ and $\W$ are stable under retracts.
\item[MC4] For a commutative diagram \[ \xymatrix{A \ar^a[r] \ar_h[d] & X \ar^f[d] \\ B \ar_b[r] \ar@{.>}[ur] & Y}\] a lift exists if: either $h$ is a cofibration and $f$ is an acyclic fibration (meaning that $f \in \Fib \cap \W$) or $h$ is an acyclic cofibration and $f$ is a fibration.
\item[MC5] Any morphism can be factored in two ways: as a cofibration followed by an acyclic fibration, and as an acyclic cofibration followed by a fibration.
\end{description}
\end{defi}

\subsection{The homotopy category}
In his book, Quillen proved that the localization $\mathrm{Ho}~\mathcal{C}$ of a model category $\mathcal{C}$ obtained by inverting all the weak equivalences is equivalent to the quotient category $\mathcal{C}_{cf}/\sim$ of the cofibrant and fibrant objects by the homotopy equivalence. This proves that the localization is indeed a category. For a detailed proof of this result, see also Hovey in \cite[1.2.9]{Ho}.

\begin{lem}[Quillen]
Let $\mathcal{C}$ be a model category. If $X$ and $Y$ are fibrant and cofibrant, then the homotopy relation is an equivalence relation on $\mathcal{C}(X,Y)$, compatible with the composition.
\end{lem}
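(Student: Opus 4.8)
The plan is to run the standard cylinder/path-object argument, as in \cite{DS} and \cite{Ho}. Recall that a \emph{cylinder object} for $X$ is a factorization of the fold map $X\sqcup X\to X$ as a cofibration $i_0+i_1\colon X\sqcup X\to \mathrm{Cyl}(X)$ followed by a weak equivalence $\sigma\colon \mathrm{Cyl}(X)\to X$ (such factorizations exist by MC5, the finite coproduct by MC1); two maps $f,g\colon X\to Y$ are \emph{left homotopic}, written $f\sim^{\ell}g$, if for some cylinder object there is $H\colon\mathrm{Cyl}(X)\to Y$ with $Hi_0=f$ and $Hi_1=g$. Dually one has \emph{path objects} $Y\to\mathrm{Path}(Y)\xrightarrow{(p_0,p_1)}Y\times Y$ and \emph{right homotopy} $\sim^{r}$. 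Reflexivity of $\sim^{\ell}$ is witnessed by $f\sigma$, and symmetry by interchanging $i_0$ and $i_1$ (which produces another cylinder object); neither uses any (co)fibrancy. The first real point is \emph{transitivity}: given $f\sim^{\ell}g$ via $\mathrm{Cyl}(X)$ and $g\sim^{\ell}h$ via $\mathrm{Cyl}'(X)$, one forms the pushout $\mathrm{Cyl}''(X):=\mathrm{Cyl}(X)\sqcup_X\mathrm{Cyl}'(X)$ glueing $i_1$ to $i_0'$, and checks that it is again a cylinder object for $X$; the two homotopies then glue to give $f\sim^{\ell}h$.

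The verification in that last step is where cofibrancy of $X$ enters, and I expect it to be the main obstacle. When $X$ is cofibrant, $\emptyset\to X$ is a cofibration, hence so is each coprojection $X\to X\sqcup X$, hence so are $i_0,i_1\colon X\to\mathrm{Cyl}(X)$ (as composites with $i_0+i_1$); since $\sigma i_0=\sigma i_1=\mathrm{id}_X$, two-out-of-three makes $i_0$ and $i_1$ \emph{acyclic} cofibrations. Consequently the pushout inclusion $\mathrm{Cyl}'(X)\hookrightarrow\mathrm{Cyl}''(X)$ is an acyclic cofibration, so the induced map $\mathrm{Cyl}''(X)\to X$ is a weak equivalence by two-out-of-three, and one checks by the usual argument that $X\sqcup X\to\mathrm{Cyl}''(X)$ is a cofibration; thus $\mathrm{Cyl}''(X)$ is a cylinder object and $\sim^{\ell}$ is an equivalence relation on $\mathcal{C}(X,Y)$ for every cofibrant $X$. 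Dually, $\sim^{r}$ is an equivalence relation on $\mathcal{C}(X,Y)$ for every fibrant $Y$.

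Next I would show that when $X$ is cofibrant \emph{and} $Y$ is fibrant the two relations coincide. If $f\sim^{\ell}g$ via $H\colon\mathrm{Cyl}(X)\to Y$, choose a path object $\mathrm{Path}(Y)$ with $s\colon Y\to\mathrm{Path}(Y)$ and $(p_0,p_1)$ a fibration; the square with top $sf\colon X\to\mathrm{Path}(Y)$, left $i_0\colon X\to\mathrm{Cyl}(X)$, bottom $(f\sigma,H)\colon\mathrm{Cyl}(X)\to Y\times Y$ and right $(p_0,p_1)$ commutes, $i_0$ is an acyclic cofibration, and MC4 yields a lift $L\colon\mathrm{Cyl}(X)\to\mathrm{Path}(Y)$; then $Li_1$ is a right homotopy from $f$ to $g$. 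The dual lift (using $Y$ fibrant) gives $\sim^{r}\Rightarrow\sim^{\ell}$, so on $\mathcal{C}(X,Y)$ with $X$ cofibrant and $Y$ fibrant we may write a single relation $\sim$, an equivalence relation by the previous paragraph. Finally, compatibility with composition: postcomposing a left homotopy $H$ with $k\colon Y\to Z$ gives a left homotopy, so $f\sim g\Rightarrow kf\sim kg$; precomposing a right homotopy $K\colon X\to\mathrm{Path}(Y)$ with $\ell\colon W\to X$ gives a right homotopy, so $f\sim g\Rightarrow f\ell\sim g\ell$; working throughout in the full subcategory of cofibrant--fibrant objects (where $\sim^{\ell}=\sim^{r}$), these combine to show that $\sim$ passes to the composition, which completes the argument.
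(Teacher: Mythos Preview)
The paper does not give its own proof of this lemma: it is stated in the preliminaries as a background result attributed to Quillen, with the surrounding text pointing to \cite{Q}, \cite{DS} and \cite{Ho} for details. Your argument is exactly the standard cylinder/path-object proof found in those references and is correct, so there is nothing to compare.
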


The consequence is that the category of fibrant and cofibrant objects up to homotopy is well-defined.

\begin{theo}[\cite{Q}]\label{th:ho}
Let $\mathcal{C}$ be a model category. Let $\mathcal{C}_{cf}$ be the full subcategory of fibrant cofibrant objects. Let $\sim$ be the homotopy relation. Let $\mathrm{Ho}~\mathcal{C}$ be the localization of $\mathcal{C}$ obtained by inverting all the weak equivalences. Then we have \[\mathrm{Ho}~\mathcal{C} \simeq \mathcal{C}_{cf}/\sim.\]
\end{theo}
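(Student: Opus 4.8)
The plan is to follow Quillen's original argument (streamlined as in Dwyer--Spalinski and Hovey), working inside $\mathcal{C}$ with the cylinder/path-object technology furnished by \textbf{MC1} and \textbf{MC5}. First I would set up the homotopy calculus. For a cofibrant $X$, a \emph{cylinder object} is obtained by factoring the fold map $X \sqcup X \to X$ as a cofibration $X \sqcup X \to \mathrm{Cyl}(X)$ followed by a weak equivalence $\mathrm{Cyl}(X) \to X$; dually, for a fibrant $Y$ a \emph{path object} $Y^{I}$ comes from factoring the diagonal $Y \to Y \times Y$. One defines left homotopy ($f \sim_{\ell} g$ when $f \sqcup g$ extends over some cylinder object of the source) and right homotopy, and proves the standard package: on $\mathcal{C}(X,Y)$ with $X$ cofibrant and $Y$ fibrant the two notions agree, the relation is an equivalence relation compatible with composition, and it can be tested against \emph{any} cylinder (resp. path) object. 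This is exactly the lemma of Quillen quoted above, and it guarantees that $\mathcal{C}_{cf}/\!\sim$ is a genuine category.

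The second ingredient is the Whitehead theorem: a weak equivalence between objects of $\mathcal{C}_{cf}$ is a homotopy equivalence. I would first handle an acyclic fibration $p$ between bifibrant objects: a section $s$ of $p$ exists by the lifting axiom \textbf{MC4} (lift $\mathrm{id}$ against $p$ along $\emptyset \to Y$), and a lift of $(sp,\mathrm{id})$ against $p$ over a cylinder object of the source shows $sp \sim \mathrm{id}$, so $p$ is a homotopy equivalence; the acyclic-cofibration case is dual. A general weak equivalence between bifibrant objects then factors, via \textbf{MC5} and two-out-of-three, through a bifibrant object as an acyclic cofibration followed by an acyclic fibration, hence is a composite of homotopy equivalences. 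I expect this step and the final faithfulness argument to carry most of the technical weight.

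Next I would assemble the comparison functors. Fix cofibrant and fibrant replacements $Q$ and $R$ (apply \textbf{MC5} to $\emptyset \to X$ and to $X \to \ast$); then $RQX$ is bifibrant, and any $f : X \to Y$ lifts, by \textbf{MC4}, to a map $RQX \to RQY$ well defined up to homotopy. This yields a functor $\gamma : \mathcal{C} \to \mathcal{C}_{cf}/\!\sim$, which by the Whitehead theorem inverts weak equivalences, hence factors through a functor $\bar{\gamma} : \mathrm{Ho}\,\mathcal{C} \to \mathcal{C}_{cf}/\!\sim$. Conversely, the composite $\mathcal{C}_{cf} \hookrightarrow \mathcal{C} \to \mathrm{Ho}\,\mathcal{C}$ identifies homotopic maps: if $f \sim g$ via $\mathrm{Cyl}(X)$, the two structure maps $X \to \mathrm{Cyl}(X)$ are both sections of the weak equivalence $\mathrm{Cyl}(X) \to X$, so they become equal in $\mathrm{Ho}\,\mathcal{C}$, forcing $f = g$ there; hence this composite factors through $\mathcal{C}_{cf}/\!\sim$, giving $\delta : \mathcal{C}_{cf}/\!\sim \,\to \mathrm{Ho}\,\mathcal{C}$.

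It remains to check that $\bar{\gamma}$ and $\delta$ are mutually inverse. The functor $\delta$ is essentially surjective since $X$ is joined to $RQX$ by a zig-zag of weak equivalences, so $X \cong RQX$ in $\mathrm{Ho}\,\mathcal{C}$, and $\bar{\gamma}\delta$ is naturally isomorphic to the identity of $\mathcal{C}_{cf}/\!\sim$ through the replacement maps. The substantive step — which I expect to be the main obstacle — is that $\delta$ is \emph{full and faithful}, i.e.\ that for bifibrant $X,Y$ the canonical map $(\mathcal{C}_{cf}/\!\sim)(X,Y) \to \mathrm{Ho}\,\mathcal{C}(X,Y)$ is a bijection. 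For fullness one shows every morphism of $\mathrm{Ho}\,\mathcal{C}$ between bifibrant objects is represented by a genuine map of $\mathcal{C}$, by replacing each formally inverted weak equivalence (which one may assume is between bifibrant objects, after inserting replacements) by a homotopy inverse supplied by Whitehead's theorem; for faithfulness one shows that two genuine maps agreeing in $\mathrm{Ho}\,\mathcal{C}$ are homotopic, again by tracking the identifications forced by inverting the acyclic structure maps of a cylinder object. As a by-product this settles the implicit set-theoretic point that $\mathrm{Ho}\,\mathcal{C}$ has small hom-sets. Once $\delta$ is an equivalence, $\bar{\gamma}$ is a quasi-inverse and the theorem follows.
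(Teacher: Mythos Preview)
Your proposal is correct and follows the standard Quillen argument (as streamlined by Dwyer--Spalinski and Hovey), which is precisely what the paper invokes: the theorem is stated with a citation to \cite{Q} and the surrounding text points to Hovey \cite[1.2.9]{Ho} for a detailed proof, without reproducing one. There is therefore nothing to compare against---the paper treats this as a black-box result from the literature, and your sketch matches that literature.
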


\begin{rmk}\label{rmk:ho}
In his proof, Hovey only needs the existence of the second factorization on cofibrant objects. So, as we are only going to show this factorization on cofibrant objects, this is sufficient to use this theorem.
\end{rmk}

\subsection{Constructing model structures}
There is a theorem we can use in order to show that a category is equipped with a model structure. But let first recall a definition.

\begin{defi}
Let $f:A \to B$ and $g:X \to Y$ be two morphisms in a category $\C$. We say that $f \square g$ if, for any commutative square \[\xymatrix{A \ar^a[r] \ar_f[d] & X \ar^g[d] \\
B \ar_b[r] \ar@{.>}[ur] & Y}\]
there exists an arrow from $B$ to $X$ such that both triangles commute.

If $\mathcal{A}$ is a class of morphisms, we write \[^\square\mathcal{A}=\{f \text{ morphism of } \C \text{ such that } f \square g,~\forall g \in \mathcal{A}\}.\]
Dually, we write  \[\mathcal{A}^\square=\{g \text{ morphism of } \C \text{ such that } f \square g,~\forall f \in \mathcal{A}\}.\]
\end{defi}

\begin{theo}[\cite{Ho}]\label{th:hov}
Let $\mathcal{C}$ be a category. Let $\W$, $\Fib$, $\Cof$ be three classes of morphisms, where there exists $I$ (respectively $J$) such that $\Fib=J^\square$ (respectively \\ $\Cof=^\square (I^\square)$). Then there is a model structure on $\mathcal{C}$ if and only if:
\begin{enumerate}[label=(\roman*)]
\item The class of morphisms $\W$ has the "two-out-of-three" property, and is stable under retracts.
\item We have $J^\square \cap \W=I^\square$ (elements in this class are called acyclic fibrations).
\item We have $^\square(J^\square) \subseteq \W \cap~^\square(I^\square)$ (elements in this class are called acyclic cofibrations).
\item Any morphism can be factored through an acyclic cofibration followed by a fibration. Any morphism can be factored through a cofibration followed by an acyclic fibration.
\end{enumerate}
\end{theo}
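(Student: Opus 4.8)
The plan is to deduce the ``if'' direction from a direct verification of Quillen's axioms MC1--MC5 for the triple $(\W,\Fib,\Cof)$, reading the conditions (i)--(iv) as a compact reformulation of those axioms; equivalently, one checks that $\W$ has two-out-of-three and that $(\Cof,\Fib\cap\W)$ and $(\Cof\cap\W,\Fib)$ are weak factorization systems. The one ingredient beyond formal manipulation is the classical retract argument: if $u=p\circ i$ with $p$ in a class $\mathcal R$ and $u$ having the left lifting property against $\mathcal R$, then $u$ is a retract of $i$ and hence inherits every retract-stable membership of $i$. Throughout I would use freely that any class of the shape ${}^{\square}\mathcal A$ or $\mathcal A^{\square}$ is closed under retracts, and that the presentations $\Cof={}^{\square}I$ and $\Fib=J^{\square}$ entail $\Cof={}^{\square}(\Cof^{\square})$ and $\Fib=({}^{\square}\Fib)^{\square}$.

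First the easy axioms. MC1 is part of the standing hypotheses (one works with $\C$ bicomplete). MC2 is the two-out-of-three clause of (i), and in MC3 the class $\W$ is retract-closed by the remaining clause of (i) while $\Cof$ and $\Fib$ are retract-closed automatically. For MC5 I would first invoke (ii) to identify the acyclic fibrations $\Fib\cap\W$ with $I^{\square}$, and (iii) to place ${}^{\square}(J^{\square})$ inside $\W\cap{}^{\square}(I^{\square})$, i.e.\ inside the acyclic cofibrations; the two factorizations supplied by (iv) then read as ``a cofibration followed by an acyclic fibration'' and ``an acyclic cofibration followed by a fibration'', which is exactly MC5.

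The core is MC4. For a cofibration $h$ against an acyclic fibration $f$: by (ii) one has $f\in I^{\square}$, and cofibrations lift against $I^{\square}$ by the presentation of $\Cof$, so the lift exists. For an acyclic cofibration $j$ against a fibration $q$: I would factor $j=q'\circ i'$ using (iv), with $i'\in{}^{\square}(J^{\square})$ and $q'\in\Fib$; by (iii), $i'\in\W$, so two-out-of-three forces $q'\in\W$, hence $q'\in\Fib\cap\W=I^{\square}$ by (ii). Since $j$ is a cofibration it lifts against $q'$, and the retract argument then exhibits $j$ as a retract of $i'$; as ${}^{\square}(J^{\square})={}^{\square}\Fib$ is retract-closed this gives $j\in{}^{\square}\Fib$, i.e.\ $j$ lifts against every fibration, in particular against $q$. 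That finishes MC4 and hence the model structure. For the ``only if'' direction one runs the same translation backwards: (i) and (iv) are just MC2, MC3 and MC5, while (ii) and (iii) are the standard internal characterisations of the four distinguished classes of a model category through their lifting properties (cofibrations $={}^{\square}(\Fib\cap\W)$, acyclic cofibrations $={}^{\square}\Fib$, fibrations $=(\Cof\cap\W)^{\square}$, acyclic fibrations $=\Cof^{\square}$).

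The step I expect to be the main obstacle is bookkeeping rather than conceptual: checking that the left and right factors (``weak cofibration'' and ``weak fibration'') produced in (iv) genuinely land in $\Cof\cap\W$ and $\Fib\cap\W$ --- which is precisely what conditions (ii) and (iii) are there to guarantee --- and being careful that the presentations $\Cof={}^{\square}I$, $\Fib=J^{\square}$ are only ever used in the stronger forms $\Cof={}^{\square}(\Fib\cap\W)$ and $\Fib=(\Cof\cap\W)^{\square}$ that MC4 actually requires, which again comes down to combining (ii), (iii) with the retract argument.
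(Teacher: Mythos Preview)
The paper does not give its own proof of this theorem: it is quoted from Hovey as a black box, followed only by a remark explaining that the usual smallness hypotheses (and the small object argument) have been replaced by the explicit factorizations in (iv). There is therefore nothing in the paper to compare your argument against. Your sketch is the standard route one finds in Hovey --- verify MC1--MC5 directly, with the retract argument doing the real work in MC4 --- and that strategy is correct.

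One genuine loose end, which you flag in your last paragraph but do not actually close: from the bare presentation $\Cof={}^{\square}I$ it is \emph{not} automatic that cofibrations lift against $I^{\square}$. You use this twice (once for ``cofibration vs.\ acyclic fibration'' in MC4, and again when you lift $j$ against $q'$ in the acyclic-cofibration case). The retract argument together with the second factorization of (iv) gives the inclusion ${}^{\square}(I^{\square})\subseteq{}^{\square}I$, but the reverse inclusion ${}^{\square}I\subseteq{}^{\square}(I^{\square})$ does not follow from (i)--(iv) as literally written. In Hovey's own formulation this issue does not arise because cofibrations are \emph{defined} as $I\text{-cof}={}^{\square}(I^{\square})$, not as ${}^{\square}I$; the paper's phrasing $\Cof={}^{\square}I$ is a slight imprecision in transcription. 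Under the intended reading $\Cof={}^{\square}(I^{\square})$ (which is also how the paper uses the theorem in its applications), your argument goes through cleanly.
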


\begin{rmk}
The previous theorem is usually stated in a slightly different manner. Factorizations are not required to be given a priori ; instead, one requires that the domains of $I$ (respectively $J$) 
be small relative to $I$-cell (respectively $J$-cell), where, for a class of morphisms $A$, $A$-cell is the collection of $A$-cell complexes, which is a transfinite composition of pushouts of elements of $A$. The factorizations are then given by the small object argument.

Here, we cannot apply the small object argument, because, in most of our examples (for example, see \cite{BIKR}), we do not have all the colimits. We therefore explicitly compute the factorizations without using the small object argument.
\end{rmk}

\subsection{About a weak model structure on triangulated categories}
In his paper \cite{Palu}, Palu has shown that any covariantly finite rigid subcategory of a triangulated category gives rise to a weak model structure. This inspires the idea of the proof in this article, but all proofs are independent.

\begin{theo}[Palu, \cite{Palu}]
Let $\mathcal{C}$ be a triangulated category with a covariantly finite, rigid subcategory $\mathcal{T}$.

There is a left-weak model category structure on $\mathcal{C}$, where:
\begin{itemize}
\item All objects are fibrant.
\item An object is cofibrant if and only if it belongs to $\mathcal{T} * \Sigma \mathcal{T}$.
\item Weak equivalences $\W$ compose the class of morphisms $X \to Y$ such that, for any triangle $Z \to X \to Y \to \Sigma Z$, both other morphisms belong to the ideal $(\mathcal{T}^\perp)$.
\item Two morphisms are homotopic if and only if their difference factors through $\mathrm{add} \mathcal{T}$.
\end{itemize}

It means that $\mathcal{C}$ contains three classes of morphisms $\W$, $\Fib$, $\Cof$ such that:
\begin{enumerate}
\item Pullbacks of acyclic fibrations along deflations exist and are acyclic fibrations.
\item The class $\W$ has the "two-out-of-three" property.
\item The three classes $\W$, $\Fib$, $\Cof$ contain all identities, are stable under retracts and composition.
\item We have $\W \cap \Cof \subseteq ^\square \Fib$ and $\Cof \subseteq ^\square(\W \cap \Fib)$
\item Any morphism can be factored through an acyclic cofibration followed by a fibration. Any morphism with cofibrant domain can be factored through a cofibration followed by an acyclic fibration.
\end{enumerate}
\end{theo}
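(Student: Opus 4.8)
The plan is to verify, one at a time, the five conditions spelled out in the statement, handling them in the order (2), (3), (1), (4), (5) and keeping the two factorizations for last: as in the exact-category part of this paper the small object argument is unavailable, so the factorizations must be produced explicitly. Throughout, the one homological tool is the long exact sequence obtained by applying $\mathcal{C}(T,-)$, $T\in\mathcal{T}$, to a distinguished triangle, used together with the rigidity hypothesis $\mathcal{C}(\mathcal{T},\Sigma\mathcal{T})=0$ and the octahedral axiom; this is the triangulated shadow of the study of $\mathrm{pr}\,\M$ and $\mho\,\M$ carried out later in this paper.

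First one fixes the three classes, in the spirit of Theorem \ref{th:hov}: the weak equivalences $\W$ are prescribed, while $\Fib$ and $\Cof$ are obtained as the right, respectively left, lifting classes of suitable families of morphisms built out of $\mathcal{T}$ and $\Sigma\mathcal{T}$ — arranged so that $X\to 0$ is always a fibration (every object is fibrant) and so that $0\to X$ is a cofibration exactly when $X$ lies in $\mathcal{T} * \Sigma\mathcal{T}$. Concretely, one can take a fibration to be a morphism $p$ for which $\mathcal{C}(\mathcal{T},p)$ is surjective, and a cofibration to be a morphism whose mapping cone belongs to $\mathcal{T} * \Sigma\mathcal{T}$. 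An initial lemma, which I would prove first, records the closure properties of $\mathcal{T} * \Sigma\mathcal{T}$ (stability under summands, and the extension-type closure needed for $\Cof$ to be closed under composition); this is where the octahedral axiom and rigidity are combined, exactly as in the study of $\mathrm{pr}\,\M$.

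The formal part is then a sequence of diagram chases. Condition (2), two-out-of-three for $\W$, together with the identity/composition closure and retract-stability of all three classes in (3), follows from the long exact sequences attached to the three cones of a composable pair via an octahedron. Condition (1), that the (homotopy) pullback of a trivial fibration is again a trivial fibration, is read off the defining triangle of the homotopy pullback, which always exists. Condition (4) is the first substantial point: given a lifting square whose left edge is a cofibration, one extracts from the cone of that cofibration — an object of $\mathcal{T} * \Sigma\mathcal{T}$ — a morphism out of $\mathrm{add}\,\mathcal{T}$, lifts it along the fibration using surjectivity of $\mathcal{C}(\mathcal{T},-)$, and checks that the residual obstruction to a diagonal filler vanishes: by rigidity for $\Cof\subseteq {}^{\square}(\W\cap\Fib)$, and by rigidity together with the defining property of weak equivalences for $\W\cap\Cof\subseteq {}^{\square}\Fib$.

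The main obstacle is condition (5). For the factorization of an arbitrary $f\colon X\to Y$ as a weak cofibration followed by a fibration, the idea is to use covariant finiteness of $\mathcal{T}$ to produce an approximation of the appropriate object, to glue it to $X$ over $f$ by a mapping-cone construction, and to identify the two factors — first a weak cofibration, then a fibration — by the same long exact sequences; this parallels the cofibrant-replacement construction behind Theorem \ref{th:pref}. For the factorization as a cofibration followed by a weak fibration, I expect, exactly as in the exact-category case and as already signalled in Remark \ref{rmk:ho}, that the construction only goes through when the domain $X$ is itself cofibrant — which is precisely why the resulting structure is merely \emph{left}-weak. The longest verification is then checking that the constructed factors really lie in the asserted classes. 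Once the factorizations are in hand, the identification of the homotopy relation — two maps homotopic iff their difference factors through $\mathrm{add}\,\mathcal{T}$ — with the abstract left/right homotopy follows from the usual cylinder- and path-object arguments, using that the cofibrant objects are precisely $\mathcal{T} * \Sigma\mathcal{T}$ and that $\mathrm{add}\,\mathcal{T}$ behaves like a zero object up to homotopy.
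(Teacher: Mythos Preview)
This theorem is not proved in the paper at all: it is quoted from Palu's article \cite{Palu} in the preliminaries section as background for the exact-category analogues that the paper goes on to establish, and the author explicitly notes that Palu's result ``inspires the idea of the proof in this article, but all proofs are independent.'' Consequently there is no proof in the paper to compare your proposal against.

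That said, your sketch is a sensible outline of how the triangulated case goes, and it correctly anticipates the structural parallels with the exact-category arguments developed later in this paper (the closure properties of $\mathcal{T}*\Sigma\mathcal{T}$ mirroring those of $\mathrm{pr}\,\M$, the explicit factorizations in place of the small object argument, the restriction of the second factorization to cofibrant domains). One point of caution: you describe the first factorization as using \emph{covariant} finiteness of $\mathcal{T}$ to approximate ``the appropriate object''; in Palu's setup it is indeed covariant finiteness that is assumed, but you should be explicit about which object is being approximated (a left $\mathcal{T}$-approximation of $X$, dualizing the right-$\M$-approximations used in this paper), since the roles of left and right approximations are swapped relative to the exact-category treatment here. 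If your aim is to supply a self-contained proof, you would need to consult \cite{Palu} directly rather than this paper.
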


\subsection{Prefibration categories}

In his paper \cite{R}, Radulescu-Banu works on prefibration categories.
\begin{defi}
An ABC prefibration category (ABC stands for Anderson-Brown-Cisinski) consists of a category $\E$, with two classes of maps, the fibrations $\Fib$ and the weak equivalences $\W$ satisfying the following axioms:

\begin{description}
\item[F1] $\E$ has a final object which is fibrant. Fibrations are stable under composition. All isomorphisms are weak equivalences, and all isomorphisms with fibrant codomain are acyclic fibration (it means fibrations which are also weak equivalences).
\item[F2] $\W$ has the "two-out-of-three" property, meaning that, for two composable maps $g$ and $f$, if two of the three maps $f$, $g$ or $gf$ are in $\W$, so is the third.
\item[F3] Both classes $\Fib$ and $\W$ are stable under retracts.
\item[F4] For a diagram \[ \xymatrix{A \ar@{.>}^a[r] \ar@{.>}_h[d] & X \ar^f[d] \\ B \ar_b[r] & Y}\] the pullback exists in $\E$. Moreover, if $f$ is a fibration (respectively an acyclic fibration), then $h$ is a fibration (respectively an acyclic fibration).
\item[F5] Any morphism $f:A \to B$, with $B$ fibrant, can be factored through a weak equivalence followed by a fibration.
\end{description}
\end{defi}

He also shows the useful theorem:

\begin{theo}[Radulescu-Banu,\cite{R}, Theorem 6.4.2]
If $\E,\W$ is a category pair satisfying the following:
\begin{enumerate}
\item The two-out-of-three axiom.
\item For a diagram \[ \xymatrix{A \ar@{.>}^a[r] \ar@{.>}_h[d] & X \ar^f[d] \\ B \ar_b[r] & Y}\] the pullback exists in $\E$. Moreover, if $f$ is a weak equivalence, then $h$ is a weak equivalence.
\item For any maps $\xymatrix{A \ar@<1ex>^f[r] \ar@<-1ex>_g[r] & B \ar^t[r] & B'}$ with $t \in \W$, and $t \circ f=t \circ g$, there exists $t':A' \to A \in \W$ such that $f \circ t'=g \circ t'$.
\end{enumerate}
Then, we have the following results:
\begin{enumerate}
\item Each map $h:A \to B$ in the homotopy category can be written as a right fraction $f \circ s^{-1}$, with $s \in \W$.
\item Two fractions $f \circ s^{-1}$ and $g \circ s^{-1}$ are equal in the homotopy category if and only if there exist weak equivalences $s'$, $t'$, as in the diagram below, and such that $s \circ s'=t \circ t'$ and $f \circ s'=g \circ t'$:

\[ \xymatrix{ & & A''' \ar_{s'}[dl] \ar^{t'}[dr] & & \\
& A' \ar_s[dl] \ar_f[drrr] & & A'' \ar^g[dr] \ar_t[dlll] & \\
A & & & & B}\]

\end{enumerate}
\end{theo}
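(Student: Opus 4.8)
The plan is to recognise hypotheses (1)--(3) as precisely the axioms for a \emph{calculus of right fractions} on the category pair $(\E,\W)$, and then to appeal to the Gabriel--Zisman description of the localisation $\mathrm{Ho}\,\E = \E[\W^{-1}]$. Recall that such a calculus asks that $\W$ contain all identities and be closed under composition; that the right Ore condition hold (for every $u:X\to Y$ and every $s:Y'\to Y$ in $\W$ there are $v:X'\to Y'$ and $t:X'\to X$ with $t\in\W$ and $u t = s v$); and that right cancellation hold (if $s f = s g$ with $s\in\W$, then $f t = g t$ for some $t\in\W$). Once these are in place, the classical theory produces the localisation together with exactly the kind of fractional description asked for in the conclusion.

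First I would verify the three requirements. That $\W$ contains the identities is part of the notion of a category pair, and closure under composition is the special case $f,g\in\W\Rightarrow gf\in\W$ of the two-out-of-three axiom (1). The right Ore condition follows from (2): given $u:X\to Y$ and $s:Y'\to Y$ in $\W$, regard $s$ as the map to be pulled back along $u$; the pullback $X'$ exists by (2), the projection $t:X'\to X$ is the pullback of the weak equivalence $s$ and hence lies in $\W$, and the other projection $v:X'\to Y'$ satisfies $u t = s v$ by the universal property. Right cancellation is exactly hypothesis (3). Hence $(\E,\W)$ admits a calculus of right fractions.

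Next I would invoke the theorem of Gabriel and Zisman: the localisation $\E[\W^{-1}]$ exists, has the same objects as $\E$, and every morphism $A\to B$ is represented by a right fraction, i.e.\ a span $A\xleftarrow{s}A'\xrightarrow{f}B$ with $s\in\W$, composition being computed by filling Ore squares; this is conclusion (1). For conclusion (2), the Gabriel--Zisman equivalence relation on fractions states that $A\xleftarrow{s}A'\xrightarrow{f}B$ and $A\xleftarrow{t}A''\xrightarrow{g}B$ represent the same morphism of $\mathrm{Ho}\,\E$ if and only if there exist an object $A'''$ and maps $s':A'''\to A'$, $t':A'''\to A''$ with $s s' = t t'$, this common composite lying in $\W$, and $f s' = g t'$. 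It then remains only to promote $s'$ and $t'$ to weak equivalences: since $s\in\W$ and $s s'\in\W$, two-out-of-three gives $s'\in\W$, and likewise $t'\in\W$; conversely, if $s',t'\in\W$ satisfy $s s' = t t'$ and $f s' = g t'$ then $s s'\in\W$ automatically, so the two formulations coincide. This recovers the criterion and the commutative diagram in the statement.

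I expect that the only genuine work, if one prefers a self-contained argument to citing Gabriel--Zisman, is the classical verification that the relation on fractions is transitive and that the Ore-square composition is well defined and associative; both arguments go through unchanged once (1)--(3) are known, and both use right cancellation in an essential way. No feature peculiar to prefibration categories is needed here: hypotheses (1)--(3) alone suffice, and in particular the existence of the factorisations (axiom F5) plays no role in this statement.
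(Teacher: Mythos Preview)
The paper does not prove this theorem at all: it is stated with attribution to Radulescu-Banu \cite{R}, Theorem 6.4.2, and used later as a black box in the proof of Lemma \ref{lem:iso}. So there is no ``paper's own proof'' to compare your attempt against.

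That said, your argument is correct and is essentially the standard one. Hypotheses (1)--(3) are precisely the axioms for a right calculus of fractions in the sense of Gabriel--Zisman: (1) gives closure of $\W$ under composition (and identities are in $\W$ by convention for a category pair), (2) yields the right Ore condition via pullback, and (3) is right cancellation verbatim. Once this is recognised, conclusions (1) and (2) are the classical description of morphisms in $\E[\W^{-1}]$ as equivalence classes of right fractions, and your use of two-out-of-three to upgrade $s'$ and $t'$ from arbitrary maps (with $s s' \in \W$) to weak equivalences is exactly right. Your closing remark that no prefibration-specific axioms are needed is also accurate; this is a purely Gabriel--Zisman statement, which is why the paper can later apply it to the quotient pair $(\E/\{f : Gf = 0\}, \underline{\W})$ rather than to $\E$ itself.
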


\subsection{Some examples of such categories}

As it has been said in the introduction, there are several examples of applications of this article.

\begin{defi}
Let $A$ be a finite-dimensional algebra over an algebraically closed field $K$. Let $Q=(Q_0,Q_1,s,t)$ be a Dynkin quiver. Let $\overline{Q}$ be the quiver obtained from $Q$ by adding, for each arrow $\alpha$ of $Q$ from $i$ to $j$, an arrow $\alpha^*$ from $j$ to $i$. Let $K\overline{Q}$ be the path algebra over $\overline{Q}$. Then the preprojective algebra associated with $Q$ is defined as \[ \Lambda_Q = K\overline{Q}/\mathcal{I}\] where $\mathcal{I}$ is the ideal generated by the element\[ \sum_{\alpha \in Q_1} (\alpha^* \circ \alpha-\alpha \circ \alpha^*).\]
\end{defi}

\begin{lem}
Let $Q$ and $Q'$ be two quivers of the same Dynkin type. Then $\Lambda_Q$ is isomorphic to $\Lambda_{Q'}$.
\end{lem}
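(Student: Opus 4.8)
The plan is to show that the preprojective algebra depends only on the underlying graph and not on the chosen orientation, and to realize the isomorphism by an explicit relabelling of arrows. Since $Q$ and $Q'$ have the same Dynkin type, they share the same underlying (simply laced, acyclic) graph $\Gamma$, so $Q'$ is obtained from $Q$ by reversing the arrows lying in some subset of the edge set of $\Gamma$. Reversing these arrows one at a time produces a finite chain of quivers $Q=Q^{(0)},Q^{(1)},\dots,Q^{(r)}=Q'$ in which consecutive quivers differ by the reversal of a single arrow; composing the isomorphisms obtained at each step, it suffices to treat the case in which $Q'$ is obtained from $Q$ by replacing one arrow $\alpha_0\colon i\to j$ with an arrow $\beta_0\colon j\to i$ and leaving every other arrow unchanged.

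In that case the double quivers $\overline{Q}$ and $\overline{Q'}$ have the same vertices, and their arrow sets are matched by the bijection that is the identity away from $\alpha_0$ and sends $\alpha_0\mapsto\beta_0^{*}$, $\alpha_0^{*}\mapsto\beta_0$ (one checks sources and targets agree). I would define $\phi\colon K\overline{Q}\to K\overline{Q'}$ on generators by $e_v\mapsto e_v$ for every vertex $v$, by $\alpha\mapsto\alpha$ and $\alpha^{*}\mapsto\alpha^{*}$ for $\alpha\neq\alpha_0$, by $\alpha_0\mapsto\beta_0^{*}$, and by $\alpha_0^{*}\mapsto-\beta_0$. This assignment respects the grading of paths by their source and target vertices, so by the universal property of the path algebra it extends to a well-defined homomorphism of algebras; it is bijective (the evident assignment in the other direction is inverse to it), hence an isomorphism of path algebras. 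The key computation is that $\phi$ carries the preprojective relation $\rho_Q=\sum_{\alpha\in Q_1}(\alpha^{*}\circ\alpha-\alpha\circ\alpha^{*})$ of $Q$ to that of $Q'$: the summand indexed by $\alpha_0$ maps to $(-\beta_0)\circ\beta_0^{*}-\beta_0^{*}\circ(-\beta_0)=\beta_0^{*}\circ\beta_0-\beta_0\circ\beta_0^{*}$, which is exactly the summand of $\rho_{Q'}$ indexed by $\beta_0$, while every other summand is fixed. Therefore $\phi$ maps the two-sided ideal $\mathcal{I}_Q=\langle\rho_Q\rangle$ onto $\mathcal{I}_{Q'}=\langle\rho_{Q'}\rangle$ and descends to an isomorphism $\Lambda_Q=K\overline{Q}/\mathcal{I}_Q\xrightarrow{\ \sim\ }K\overline{Q'}/\mathcal{I}_{Q'}=\Lambda_{Q'}$. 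Iterating over the chain $Q^{(0)},\dots,Q^{(r)}$ gives $\Lambda_Q\cong\Lambda_{Q'}$.

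The only delicate point is the sign, which is the step I expect to require genuine attention. The naive relabelling $\alpha_0\mapsto\beta_0^{*}$, $\alpha_0^{*}\mapsto\beta_0$ sends $\rho_Q$ to $\rho_{Q'}$ with its $\beta_0$-term negated, and a priori such an element need not generate the same ideal; inserting the factor $-1$ in $\phi(\alpha_0^{*})$ repairs this. Equivalently, one may first pass through the automorphism of $K\overline{Q'}$ that rescales each starred arrow by a nonzero scalar — which fixes $\Lambda_{Q'}$ up to isomorphism, since this rescaling only multiplies each summand of $\rho_{Q'}$ by that scalar — in order to absorb the sign. All remaining verifications (that $\phi$ is a well-defined algebra map, that it is invertible, and that it induces a homomorphism on the quotient algebras) are routine.
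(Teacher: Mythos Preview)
Your argument is correct and is the standard proof of this well-known fact about preprojective algebras: the essential point is precisely the sign you insert when relabelling $\alpha_0^{*}\mapsto -\beta_0$, which turns the $\alpha_0$-summand of $\rho_Q$ into the $\beta_0$-summand of $\rho_{Q'}$ rather than its negative. The reduction to a single arrow reversal and the verification that $\phi$ is an isomorphism of path algebras carrying $\mathcal{I}_Q$ onto $\mathcal{I}_{Q'}$ are both fine.

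As for comparison: the paper does not actually give a proof of this lemma. It is stated in the preliminaries as a known result (it goes back at least to Gelfand--Ponomarev and is folklore in the representation theory of quivers), and serves only to justify that the Frobenius categories built from preprojective algebras of Dynkin quivers are well-defined examples for the main theorems. So there is nothing to compare; your write-up simply supplies the standard argument that the paper omits.
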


In the Dynkin case, the category $\mathrm{mod}\Lambda$ is Frobenius, Hom-finite, stably $2$-Calabi-Yau and has cluster-tilting objects.

\begin{defi}
Let $Q$ be a finite connected quiver with $n$ vertices and without oriented cycles, and let $\Lambda$ be its associated preprojective algebra. As $KQ$ is a subalgebra of $\Lambda$, we denote by $\pi_Q:\mathrm{mod}(\Lambda) \to \mathrm{mod}(KQ)$ as the restriction functor.

Let $M$ be a $KQ$-module. Then $M$ is called terminal if the following hold:
\begin{itemize}
\item $M$ is preinjective
\item If $X$ is an indecomposable $KQ$-module, with $\mathrm{Hom}_{KQ}(M,X) \neq 0$, then $X \in \mathrm{add}~M$.
\item Any indecomposable injective $KQ$-module belongs to $\mathrm{add}~M$.
\end{itemize}
\end{defi}

\begin{theo}[Geiss, Leclerc, Shröer, \cite{GLS2}, Theorem 2.1]
Let $\mathcal{C}_M$ be the subcategory of all $\Lambda$-modules whose image under $\pi_Q$ belongs to $\mathrm{add}~M$. Then the following holds:
\begin{itemize}
\item The category $\C_M$ is Hom-finite.
\item The category $\mathcal{C}_M$ is Frobenius with $n=|Q_0|$ indecomposable projective objects.
\item The stable category of $\mathcal{C}_M$ is $2$-Calabi-Yau.
\end{itemize}
\end{theo}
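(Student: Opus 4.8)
This is a theorem of Geiss--Leclerc--Schröer, and the plan is to reproduce the structure of their argument, treating the three assertions separately. Hom-finiteness is the easy part: the restriction functor $\pi_Q$ merely forgets part of the module structure, so it does not change the underlying $K$-vector space, whence $\dim_K X = \dim_K \pi_Q(X)$ for every $X \in \mathcal{C}_M$; since a terminal $KQ$-module is preinjective and therefore finite-dimensional, every object of $\mathcal{C}_M$ is a finite-dimensional $\Lambda$-module and all $\mathrm{Hom}$-spaces in $\mathcal{C}_M$ are finite-dimensional. For the underlying exact structure, I would first check that $\mathrm{add}\,M$ is closed under extensions in $\mathrm{mod}\,KQ$: this follows from the terminality of $M$ together with the standard facts that over a hereditary algebra preinjective modules are closed under extensions and that $\mathrm{add}\,M$ is closed under the Auslander--Reiten translate $\tau$ in the relevant range and contains all injectives. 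Since $\pi_Q$ is exact, $\mathcal{C}_M = \pi_Q^{-1}(\mathrm{add}\,M)$ is then closed under extensions in $\mathrm{mod}\,\Lambda$, so it inherits an exact structure in which $\mathrm{Ext}^1_{\mathcal{C}_M}(X,Y)=\mathrm{Ext}^1_\Lambda(X,Y)$ for all $X,Y \in \mathcal{C}_M$.

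The crucial input for everything that follows is the bifunctorial, non-degenerate pairing $\mathrm{Ext}^1_\Lambda(X,Y) \times \mathrm{Ext}^1_\Lambda(Y,X) \to K$ for finite-dimensional $\Lambda$-modules, which encodes the symmetry of the defining relation of the preprojective algebra (equivalently, the appropriate Calabi--Yau property of $\Lambda$). Restricting this to $\mathcal{C}_M$ and using the identification of Ext-groups above produces a bifunctorial isomorphism $\mathrm{Ext}^1_{\mathcal{C}_M}(X,Y) \cong D\,\mathrm{Ext}^1_{\mathcal{C}_M}(Y,X)$.

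For the Frobenius property, for each vertex $i$ of $Q$ one constructs a distinguished object $I_i \in \mathcal{C}_M$ (built from the injective $KQ$-module at $i$ together with the preprojective structure) and shows that $\bigoplus_i I_i$ is a relative projective generator of $\mathcal{C}_M$: given $X \in \mathcal{C}_M$, one uses Hom-finiteness to produce a deflation $P \twoheadrightarrow X$ with $P \in \mathrm{add}(\bigoplus_i I_i)$ and kernel again in $\mathcal{C}_M$, so $\mathcal{C}_M$ has enough relative projectives (and dually enough relative injectives). An object $P$ is relatively projective if and only if $\mathrm{Ext}^1_{\mathcal{C}_M}(P,-)$ vanishes on $\mathcal{C}_M$, and by the Ext-duality of the previous paragraph this holds if and only if $\mathrm{Ext}^1_{\mathcal{C}_M}(-,P)$ vanishes on $\mathcal{C}_M$, i.e.\ if and only if $P$ is relatively injective; hence the relative projectives and the relative injectives of $\mathcal{C}_M$ coincide, and $I_1,\dots,I_n$ are exactly the $n$ indecomposable ones. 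Thus $\mathcal{C}_M$ is Frobenius.

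Finally, since $\mathcal{C}_M$ is Frobenius its stable category $\underline{\mathcal{C}_M}$ is triangulated with shift $\Sigma$, and $\underline{\mathcal{C}_M}(X,\Sigma Y) \cong \mathrm{Ext}^1_{\mathcal{C}_M}(X,Y)$ bifunctorially. Feeding the Ext-duality into this isomorphism and checking its compatibility with the triangulated structure --- that is, bilinearity with respect to composition of morphisms --- one obtains a Serre functor on $\underline{\mathcal{C}_M}$ isomorphic to $\Sigma^2$, so $\underline{\mathcal{C}_M}$ is $2$-Calabi--Yau. I expect the main obstacle to be precisely this last point: establishing the functorial non-degenerate $\mathrm{Ext}^1$-pairing and, above all, verifying that it is compatible with composition so that it genuinely upgrades to a Serre duality. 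This is exactly where the special structure of preprojective algebras is indispensable, and it is also the ingredient that forces the relative projectives and relative injectives of $\mathcal{C}_M$ to agree.
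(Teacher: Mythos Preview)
The paper does not give any proof of this statement: it is quoted verbatim from \cite{GLS2} (as Theorem~2.1 there) purely to furnish examples of Frobenius categories to which the main results of the present article apply. There is therefore nothing in the paper to compare your proposal against.

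That said, your outline is a faithful sketch of the Geiss--Leclerc--Schr\"oer argument. The three pillars you identify --- extension-closure of $\mathrm{add}\,M$ making $\mathcal{C}_M$ an exact subcategory with $\mathrm{Ext}^1_{\mathcal{C}_M}=\mathrm{Ext}^1_\Lambda$, the bifunctorial non-degenerate pairing $\mathrm{Ext}^1_\Lambda(X,Y)\times\mathrm{Ext}^1_\Lambda(Y,X)\to K$ coming from the preprojective relations (due to Crawley--Boevey), and the construction of the $n$ relative projective-injectives $I_i$ from the $KQ$-injectives --- are exactly the ingredients used in \cite{GLS2}. Your own diagnosis of the delicate point is accurate: the serious work lies in checking that the $\mathrm{Ext}^1$-duality is genuinely bifunctorial and compatible with composition, so that it upgrades to a Serre duality on the stable category; everything else is comparatively formal once that is in hand.
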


We can consequently apply the results of part II.

\section{The case of exact categories}
\subsection{Study of the properties of $\mathrm{pr}\M$}

We show some preliminary lemmas which will be used in section \ref{sec:pref} in order to associate a prefibration structure with a given rigid subcategory.

We note that Lemma 1.1, which is an analogue for exact categories of Lemma 3.3 shown by Buan and Marsh in \cite{BM2}, might be of independent interest.

Let $\E$ be a weakly idempotent complete exact category with enough injective and projective objects. Assume that $\M \subseteq \E$ is a rigid, contravariantly finite subcategory of $\E$ containing all the injective objects, and stable under taking direct sums and summands. Let
\[ \mathrm{pr} \M = \{ X \in \E, \exists M_1,M_0 \in \M, 0 \to M_1 \to M_0 \to X \to 0 \} \] and
\[ \mho\M = \{ X \in \E, \exists M \in \M, I \in \mathrm{Inj}, 0 \to M \to I \to X \to 0 \}. \]

Let \[\begin{array}{ccccc}
G & : & \E & \to & \mathrm{Mod} \overline{\M} \\
 & & X & \mapsto & \overline{\E}(-,X)/\overline{\M} \\
\end{array}\] which induces the following equivalence of categories \[ \mathrm{pr}\M/\mho \M \simeq \mathrm{mod}~\overline{\M}. \]

For more details, see the article of Demonet and Liu, \cite{DL}.

In the following lemma, we prove that if M is contravariantly finite, then so is $\mathrm{pr}\M$, provided that $\M$ also contains all projective objects.

This lemma tells us that we can replace each object by a cofibrant replacement.

\begin{lem}\label{lem:prmapp}
Assume that the rigid subcategory $\M$, contains all injectives and all projectives. Then, for any X in E, there exist A in $\mathrm{pr}\M$ and a right $\mathrm{pr}\M$ approximation $A \to X$.
\end{lem}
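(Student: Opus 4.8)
The plan is to build the approximation $A \to X$ in two stages, using the hypotheses that $\M$ is contravariantly finite and contains all projectives. First I would take a right $\M$-approximation $M_0 \to X$; such a map exists because $\M$ is contravariantly finite in $\E$. The key point is that I want this map to be a \emph{deflation}, not merely a right approximation. To arrange this, I would use that $\E$ has enough projectives: pick a deflation $P \twoheadrightarrow X$ with $P$ projective, note $P \in \M$ by hypothesis, and then form $M_0' = M_0 \oplus P \to X$ (the map being the approximation on the first summand and the deflation on the second). This $M_0' \to X$ is still a right $\M$-approximation (adding a summand to the source of an approximation keeps it one), and it is now a deflation since it factors a deflation through itself; more precisely $P \hookrightarrow M_0' \to X$ is the original deflation, and by the weak idempotent completeness characterization recalled in the preliminaries (if $g\circ f$ is a deflation then so is $g$), $M_0' \to X$ is a deflation. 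Relabel $M_0' $ as $M_0$.

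Next I would take the kernel $K$ of the deflation $M_0 \twoheadrightarrow X$, so that $0 \to K \to M_0 \to X \to 0$ is a short exact sequence. Now I want to present $K$ as a quotient of an object of $\M$ as well: again using that $\M$ is contravariantly finite, take a right $\M$-approximation of $K$, and again thicken it by a projective cover of $K$ so that it becomes a deflation $M_1 \twoheadrightarrow K$ with $M_1 \in \M$. Let $A$ be the pullback (or rather, I would instead simply use the composition): consider the short exact sequence $0 \to K' \to M_1 \to K \to 0$ and splice it with $0 \to K \to M_0 \to X \to 0$. Using the splicing lemma of Demonet--Liu recalled in the preliminaries, or directly by forming the pushout, I obtain a short exact sequence $0 \to M_1 \to A \to X \to 0$ where $A$ fits into $0 \to K' \to A \to M_0 \to 0$... wait — the cleaner route is: take $A$ to be the middle term of a short exact sequence $0 \to M_1 \to A \to M_0 \to 0$ obtained as the pullback of $M_1 \twoheadrightarrow K \hookleftarrow$ is not quite it either. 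The correct construction: form the pullback of $M_0 \twoheadrightarrow X$ does not help; instead splice $0\to K'\to M_1 \to K \to 0$ into $0 \to K \to M_0 \to X \to 0$ to get, by the Demonet--Liu $3\times 3$ lemma, a short exact sequence $0 \to M_1 \to A \to X \to 0$ with $A$ sitting in $0 \to K' \to A \to M_0 \to 0$. Hmm, that gives $A \in \mho$-type data, not $\mathrm{pr}\M$. Let me instead just do the following direct thing: I claim $A$ should be defined so that $0 \to M_1 \to M_0 \to X$... no. The right statement is: we want $0 \to M_1 \to M_0' \to A \to 0$ with $M_0', M_1 \in \M$ and a map $A \to X$. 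So actually set $M_1 \twoheadrightarrow K \hookrightarrow M_0$; the composite $M_1 \to M_0$ is a morphism in $\M$, and since $\M$ is rigid its cokernel lives in $\mathrm{pr}\M$ — but this composite need not be an inflation. To fix this, replace $M_1$ by $M_1 \oplus (\text{injective})$ if needed, or rather use that $M_1 \to M_0$ factors as the section-like map into a suitable object; the clean fix is to take $M_1 \to M_0$ and complete it to an inflation by adding an injective summand to $M_0$ (injectives are in $\M$), using the standard trick that any morphism becomes an inflation after such a modification when the category has enough injectives. Then let $A$ be the cokernel of the resulting inflation $M_1 \hookrightarrow M_0 \oplus I$; by construction $A \in \mathrm{pr}\M$, and the map $A \to X$ is induced by $M_0 \oplus I \to M_0 \to X$ (which kills $M_1$ since $M_1 \to M_0 \to X$ is zero, as $M_1 \to M_0$ lands in $K = \ker(M_0 \to X)$).

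Finally I would verify that $A \to X$ is a right $\mathrm{pr}\M$-approximation. Given any $B \in \mathrm{pr}\M$, write $0 \to N_1 \to N_0 \to B \to 0$ with $N_i \in \M$, and suppose $f : B \to X$. I need to lift $f$ through $A \to X$. The composite $N_0 \to B \xrightarrow{f} X$ factors through the $\M$-approximation $M_0 \to X$ (since $N_0 \in \M$), giving $N_0 \to M_0$; chasing, the composite $N_1 \to N_0 \to M_0 \to X$ is zero, so $N_1 \to M_0$ factors through $K$, hence (using that $M_1 \twoheadrightarrow K$ and $N_1 \in \M$, so $N_1 \to K$ lifts to $N_1 \to M_1$) through $M_1$; this gives a morphism of the two presentations and hence an induced map $B \to A$ over $X$. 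The main obstacle I anticipate is precisely this bookkeeping: making every approximation map into an honest deflation/inflation (repeatedly invoking enough projectives, enough injectives, and the weak idempotent completeness characterization from the preliminaries) and then checking that the lifting diagrams genuinely commute over $X$, including the subtle point that rigidity of $\M$ (vanishing of $\mathrm{Ext}^1_\E(\M,\M)$) is what guarantees the relevant $\mathrm{Hom}$-liftings exist. Everything else is diagram-chasing of short exact sequences of the kind already used by Demonet and Liu.
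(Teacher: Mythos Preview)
Your proposal, once the exploratory detours settle, is exactly the paper's construction: take the cokernel $A$ of the inflation $M_1 \hookrightarrow M_0 \oplus I_{M_1}$ (equivalently, the pushout of $M_1 \to M_0$ along $M_1 \to I_{M_1}$), and check the approximation property by lifting a presentation $0 \to N_1 \to N_0 \to B \to 0$ through the two $\M$-approximations. Two minor corrections: the thickening by a projective is unnecessary, since any right $\M$-approximation of $X$ is already a deflation once $\M$ contains the projectives (the projective cover factors through it, then weak idempotent completeness); and rigidity of $\M$ is \emph{not} what makes the liftings work here---$N_0 \to M_0$ and $N_1 \to M_1$ come from the $\M$-approximation property, while the missing piece in your sketch, a map $N_0 \to I_{M_1}$ extending $N_1 \to M_1 \to I_{M_1}$, comes from injectivity of $I_{M_1}$.
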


\begin{proof}
Let $X \in \E$. Let $M_0 \to X$ be an $\M$-approximation of $X$. Since $\E$ is weakly idempotent complete, with enough projectives and $\M$ contains the projective objects, the morphism $M_0 \to X$ is a deflation. Let \[0 \to K_0 \to M_0 \to X \to 0\] be the associated short exact sequence. Similarly, let \[0 \to K_1 \to M_1 \to K_0 \to 0\] be a short exact sequence coming from an $\M$-approximation of $K_0$. Then we have the following diagram:
	
\[ \xymatrix{& 0 \ar[d] & & & \\
	& K_1 \ar^\beta[d] & & & \\
	& M_1 \ar^\alpha[d] \ar@{=}[r] & M_1 \ar[d] & & \\
	0 \ar[r] & K_0 \ar^b[r] \ar[d] & M_0\ar^a[r] & X \ar[r] & 0 \\
	& 0 & & &} \]
	
We have $a \circ b \circ \alpha=0$. Let $A$ be the push-out of the square:
	
\[ \xymatrix{
	M_1 \ar^{b\alpha}[r] \ar[d] & M_0 \ar^r[d] \ar@/^1pc/^a[ddr] & \\
	I_{M_1} \ar[r] \ar@/_1pc/^0[drr] & A \ar^{\exists \varphi}[dr] & \\
	& & X} \]
Then there exists a morphism $\varphi:A \to X$ such that $\varphi \circ r=a$ and the other triangle commutes.
	
We have $A \in \mathrm{pr} \M$. Indeed, we have the following short exact sequence: \[0 \to M_1 \to I_{M_1} \oplus M_0 \to A \to 0.\]
	
Moreover, $A \to X$ is an approximation. Indeed, let $i:B \to X$ be a morphism, with $B \in \mathrm{pr} \M$. Let us show that there exists $B \to A$ which makes the triangle commute.
	
Let \[0 \to M_1' \to M_0' \to B \to 0\] be a short exact sequence with $M_0,M_1 \in \M$. We have the following diagram:
\[ \xymatrix{
	& & & M'_1 \ar[d] \ar[dl] \ar@{.>}_{\exists \delta}[dlll]\\
	M_1 \ar_{\iota_{M_1}}[d] \ar^{ab\alpha}[drr] \ar^{\M\text{-app}}[rr] & & K_0 \ar^b[d] & M'_0 \ar^{\M\text{-app}}[d] \ar^j[dl] \ar@{.>}_{\exists \varepsilon}[dlll] \\
	I_{M_1} \ar[r] \ar[d] \ar_0[drr] & A \ar^l[dr] & M_0 \ar[l] \ar_a[d] & B \ar@/^1pc/@{.>}[ll] \ar^i[dl]\\
	\mho M_1 & & X &} \]
	
As $a$ is an $\M$-approximation, then there exists a morphism $j:M'_0 \to M_0$ which makes the lower-right square commute. Then, there exists a morphism $M'_1 \to K_0$ which makes it a morphism of short exact sequences.
	
As $M_1 \to K_0$ is an $\M$-approximation, then there exists $\delta : M'_1 \to M_1$ which makes the upper triangle commute. Since $I_{M_1}$ is injective and $M'_1 \to M'_0$ is an inflation, there exists a morphism $\varepsilon: M'_0 \to I_{M_1}$ which leads to a morphism of short exact sequences.
	
All the conditions are required to build a morphism $k:B \to A$ such that $l \circ k=i + 0$ since $I_{M_1} \to A \to X=0$. Then we have shown the result.
\end{proof}

\subsection{Weak equivalences and fibrations}

\begin{defi}
We recall that $G$ is the functor \[\begin{array}{ccccc}
G & : & \E & \to & \mathrm{Mod} \overline{\M} \\
 & & X & \mapsto & \overline{\E}(-,X)|\overline{\M} \\
\end{array}\]
We call by $\W$, the weak equivalences, the class of morphisms $f$ for which $Gf$ is an isomorphism.
\end{defi}

\begin{defi}
Let $f : X \to Y$ and $g : A \to B$ be two morphisms. We say that $f \square g$ when, for any commutative square
\[ \xymatrix{ A \ar[r] \ar[d] & X \ar[d] \\ B \ar[r] \ar@{.>}[ur] & Y} \] there exists a morphism $B \to X$ such that both triangles commute.
For a class of morphisms $\mathcal{A}$, we call by \[A^\square=\{ g,~\forall f \in \mathcal{A}, f \square g \} \text{ and } ^\square A=\{ f,~\forall g \in \mathcal{A}, f \square g \}.\]
\end{defi}

Let \[J= \{ f : 0 \to \mho M, M \in \M \}.\] The morphisms of $J^\square$ are called fibrations and compose the class $\Fib$.

The next lemma shows that the $\mathrm{pr}\M$-approximation constructed in lemma \ref{lem:prmapp} is actually a weak equivalence. This permits to take cofibrant replacements as we will see later.

\begin{lem}\label{lem:prmfib}
Assume that $\M$ contains the projective objects. Let $h:A \to X$ be the $\mathrm{pr}\M$-approximation constructed in Lemma \ref{lem:prmapp}. Then $h \in J^\square \cap \W$.
\end{lem}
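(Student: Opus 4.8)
The plan is to verify the two membership conditions separately. First I would show $h \in J^\square$; that is, $h$ has the right lifting property against every $f : 0 \to \mho M$ with $M \in \M$. A commutative square with such an $f$ on the left amounts to a single morphism $t : \mho M \to X$ together with the data that the composite $0 \to \mho M \to X$ agrees with $0 \to A \to X$, which is automatic. So I must produce a lift $\mho M \to A$ whose composite with $h$ is $t$. Writing $0 \to M \to I \to \mho M \to 0$ for the defining sequence of the object $\mho M$, and recalling that $h : A \to X$ was built as (essentially) the map $\varphi$ out of a push-out in the construction of Lemma~\ref{lem:prmapp}, I would pull back the sequence $0 \to M \to I \to \mho M \to 0$ along $t$ and use injectivity of $I$ (since $M \in \M$, and $\M$ contains all injectives — wait, here one needs that $I$ is injective, which it is by definition of $\mho\M$) together with the approximation property of $h$ proved in Lemma~\ref{lem:prmapp} to lift $t$ through $h$. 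Concretely, $\mho M$ itself lies in $\mathrm{pr}\M$ when $M \in \M$ (it appears in $0 \to M \to I \to \mho M \to 0$ with $M, I \in \M$), so $t : \mho M \to X$ is a map from an object of $\mathrm{pr}\M$ to $X$, and since $h$ is a right $\mathrm{pr}\M$-approximation, $t$ factors through $h$. That factorization is exactly the desired lift (the lower triangle commutes by construction, the upper one vacuously, since the domain of $f$ is $0$). This handles $h \in J^\square = \Fib$.

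Next I would show $h \in \W$, i.e. that $Gh$ is an isomorphism in $\mathrm{mod}\,\overline{\M}$. The cleanest route is to examine the short exact sequence produced in Lemma~\ref{lem:prmapp}, namely $0 \to M_1 \to I_{M_1} \oplus M_0 \to A \to 0$ together with $0 \to K_0 \to M_0 \to X \to 0$ and the push-out relating them, and to compare the two via the map $r : M_0 \to A$ and $h = \varphi : A \to X$ with $\varphi r = a$. Applying the functor $\overline{\E}(M, -)|_{\overline\M}$ for $M \in \M$ and using rigidity of $\M$ ($\mathrm{Ext}^1_\E(\M,\M) = 0$) to kill the relevant $\mathrm{Ext}$-terms, the long exact sequences attached to these short exact sequences should show that $\overline{\E}(-, A)|_{\overline\M} \to \overline{\E}(-, X)|_{\overline\M}$ is both epi and mono, i.e. that $G$ sends the kernel and cokernel of $h$ (computed appropriately) to zero. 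Alternatively — and this is probably what the paper intends — one observes that $A$ and $X$ differ by the "$\mho\M$-part" $I_{M_1} \to A$ which is killed by $G$ since $G$ factors through $\mathrm{pr}\M/\mho\M \simeq \mathrm{mod}\,\overline\M$ by the Demonet–Liu equivalence; the map $h$ becomes an isomorphism after passing to that quotient because the discrepancy between $A$ and $X$ is precisely absorbed in $\mho\M$ (the composite $I_{M_1} \to A \to X$ is $0$, and $A/\!\!\sim$ maps isomorphically to the image of $X$). I would make this precise by chasing the diagram in the proof of Lemma~\ref{lem:prmapp}.

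The main obstacle I anticipate is the $\W$-part: establishing that $Gh$ is genuinely an isomorphism (not merely epi or mono) requires carefully exploiting rigidity and the specific push-out structure of $A$, and in particular controlling the interaction between the two approximation steps ($M_0 \to X$ and $M_1 \to K_0$). One must be careful that $G$ is only well-behaved on $\mathrm{pr}\M$, so all diagram chases showing $Gh$ is invertible should be set up with both source and target (or at least a cofibrant replacement of the target) inside $\mathrm{pr}\M$, or else routed through the Demonet–Liu equivalence. The $J^\square$-part, by contrast, should be a short and essentially formal consequence of the approximation property already in hand from Lemma~\ref{lem:prmapp}.
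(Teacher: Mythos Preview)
Your argument for $h \in J^\square$ is exactly the paper's: since $\mho M \in \mathrm{pr}\M$ and $h$ is a right $\mathrm{pr}\M$-approximation, any $\mho M \to X$ lifts. Nothing to add there.

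For $h \in \W$, your instinct to prove $Gh$ is epi and mono by chasing the push-out diagram from Lemma~\ref{lem:prmapp} using rigidity is also what the paper does, though the paper is more concrete than your sketch. Surjectivity of $Gh$ is again the approximation property (any $M \to X$ with $M \in \M \subseteq \mathrm{pr}\M$ lifts through $h$). For injectivity the paper takes $a : M \to A$ with $h \circ a$ zero in $\overline{\E}$, uses $\mathrm{Ext}^1(\M,\M)=0$ on the sequence $0 \to M_1 \to I_{M_1}\oplus M_0 \to A \to 0$ to write $a = \pi b_1 + r b_2$, then uses that $M_1 \to K_0$ is an $\M$-approximation to replace $b_2$ and conclude $a$ factors through $I_{M_1}$. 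This is precisely the ``careful exploitation of rigidity and the specific push-out structure'' you anticipate, so your plan would converge to the paper's argument once written out.

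One caution: your proposed shortcut via the Demonet--Liu equivalence $\mathrm{pr}\M/\mho\M \simeq \mathrm{mod}\,\overline\M$ does not work directly here, because the target $X$ is an arbitrary object of $\E$ and need not lie in $\mathrm{pr}\M$; the equivalence only controls $G$ on $\mathrm{pr}\M$. You flag this yourself as an obstacle, and the paper indeed avoids it by arguing directly at the level of $\overline{\E}(M,-)$ rather than invoking the equivalence. So drop that alternative and stick with the elementwise chase.
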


\begin{proof}
We first show that $h \in J^\square$. Let $\mho M \to X$ be a morphism. Since $\mho \M \in \mathrm{pr}\M$, and $A \to X$ is a $\mathrm{pr}\M$-approximation, there automatically exists a lift as wanted.

Next, we have to show that the morphism $\overline{\E}(-,A)|\overline{\M} \to \overline{\E}(-,X)|\overline{\M}$ is an isomorphism. It is surjective, since if we take a morphism $M \to X$, since $M \in \mathrm{pr}\M$, there exists a lift wanted.

Then, if $a:M \to A$ is a morphism such that $h \circ a=0$ (we will see later the case where this morphism factorizes through an injective module). Using the same notations as in Lemma \ref{lem:prmapp}, as $M$ is rigid, there exists $\begin{pmatrix} b_1 \\ b_2 \end{pmatrix}:M \to I_1 \oplus M_0$ such that \[\pi \circ b_1 + r \circ b_2=a\] where $\pi:I_1 \to A$. As $\alpha:M_1 \to K_0$ is an $\M$-approximation, there exists $c:M \to M_1$ such that $b \circ \alpha \circ c=b_2$. Then we have \[a=\pi \circ b_1 + r \circ b_2\] so \[a=\pi \circ b_1 + r \circ b \circ \alpha \circ c.\] By the pushout of Lemma \ref{lem:prmapp}, $r \circ b \circ \alpha=\pi \circ \iota_1$ where $\iota_1:M_1 \to I_1$. Then \[a=\pi \circ b_1 + \pi \circ \iota_1 \circ c.\] This shows that $a$ factorizes through an injective module.

Finally, if we suppose that $h \circ a$ factorizes through an injective $J$, for example $h \circ a= \mu \circ \nu$, as $J \in \mathrm{pr}\M$, there exists $\tilde{a}$ such that $h \circ \tilde{a}=\mu$. We then proceed as above with the morphism $a-\tilde{a} \circ \nu$. This finishes to show the result.
\end{proof}

We recall that acyclic fibrations are those morphisms that are both fibrations and weak equivalences.

\begin{lem}\label{lem:relev} 
Let $f:X \to Y$ be an acyclic fibration. If $\alpha$ is a morphism from an element $M$ of $\M$ to $Y$, then there exists $\beta:M \to X$ such that $f \circ \beta=\alpha$.
\end{lem}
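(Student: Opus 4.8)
The plan is to reduce the statement to the defining lifting property of a fibration, using the fact that $M \in \M$ lies in $\mathrm{pr}\M$ and that a weak equivalence induces an isomorphism after applying $G$. Since $M \in \M$, any $M$-approximation is trivially the identity, so in particular $M \in \mathrm{pr}\M$ (writing $0 \to 0 \to M \xrightarrow{\mathrm{id}} M \to 0$, or more honestly taking a projective presentation if one insists on both terms being in $\M$; either way $M \in \mathrm{pr}\M$ because $\M$ contains the projectives and is closed under the relevant extensions). Actually the cleanest route is: since $M\in\M\subseteq\mathrm{pr}\M$ and $0\to \mho M'$ with $M'\in\M$ has the lifting property against every fibration, and $M$ is a quotient of an object of $\M$ by an object of $\M$, one sees directly that any morphism $0 \to M$ is a (trivial) cofibration, i.e.\ $M$ is cofibrant.

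First I would set up the square that expresses what we want to lift: consider
\[ \xymatrix{ 0 \ar[r] \ar[d] & X \ar^f[d] \\ M \ar_\alpha[r] \ar@{.>}[ur] & Y } \]
A dotted arrow $\beta : M \to X$ making the lower triangle commute (the upper one commutes automatically) is exactly what the lemma asks for. So it suffices to show that the map $0 \to M$ has the left lifting property against every trivial fibration, i.e.\ that $(0 \to M) \in\ ^\square(\Fib\cap\W)$. Since $M\in\mathrm{pr}\M$, this is precisely the cofibrancy of $M$, which is part of the content of Theorems \ref{th:pref}/\ref{th:modfrob}; if I want a self-contained argument at this point in the paper (before those theorems are proved), I would instead argue as follows. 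Pick a short exact sequence $0 \to M_1 \to M_0 \to M \to 0$ with $M_0,M_1 \in \M$ (using that $\M$ contains the projectives and that $M$ is a quotient of $M_0 = M$ itself, or genuinely a projective presentation). Given the square above, I would first lift $M_0 \to M \xrightarrow{\alpha} Y$ through $f$: since $f\in\W$, $Gf$ is an isomorphism, so $\overline{\E}(M_0,X)\to\overline{\E}(M_0,Y)$ is surjective, giving $\gamma_0 : M_0 \to X$ with $f\gamma_0 = \alpha\pi$ up to a map factoring through an injective; absorbing that correction term using injectivity (as in the proof of Lemma \ref{lem:prmfib}), one gets genuine equality $f\gamma_0 = \alpha\pi$. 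Then $f\circ(\gamma_0\circ(M_1\to M_0)) = \alpha\pi(M_1\to M_0) = 0$, and again using that $Gf$ is a monomorphism, $\gamma_0$ composed with $M_1\to M_0$ factors through an injective, hence through $\mho M_1$; the fibration lifting property against $J = \{0\to\mho M_1\}$ and a diagram chase let me correct $\gamma_0$ to a map that kills $M_1\to M_0$ exactly, which therefore descends along the deflation $M_0 \to M$ to the desired $\beta : M \to X$ with $f\beta = \alpha$.

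The main obstacle, as in Lemma \ref{lem:prmfib}, is bookkeeping the ``up to a map factoring through an injective'' ambiguity: $G$ lands in $\overline{\E}$, so equalities produced by $Gf$ being iso only hold modulo morphisms through injectives, and each time one must use injectivity of the relevant $\mho$-term (or the fibration property against $J$) to promote these to honest equalities in $\E$. Closing the diagram chase so that the correction terms are compatible across the two stages (lifting $M_0\to Y$, then killing $M_1\to M_0$) is the delicate point; everything else is the standard manipulation of push-outs and pull-backs of short exact sequences recalled in the preliminaries.
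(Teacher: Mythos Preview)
Your self-contained argument is correct in spirit, but it is doing far more work than the lemma requires. Because the domain is $M\in\M$ itself (not merely an object of $\mathrm{pr}\M$), there is no need to pass to a presentation $0\to M_1\to M_0\to M\to 0$ and run a two-stage lift-and-correct argument. The paper's proof is the degenerate case $M_0=M$, $M_1=0$ of what you sketch: since $Gf$ is an isomorphism, surjectivity of $\overline\E(M,X)\to\overline\E(M,Y)$ gives $\tilde\beta:M\to X$ with $f\tilde\beta=\alpha+\gamma\iota_M$ for some $\gamma:I_M\to Y$; since $I_M\in\mho\M$ and $f\in J^\square$, one lifts $\gamma$ to $\delta:I_M\to X$ with $f\delta=\gamma$; then $\beta=\tilde\beta-\delta\iota_M$ works. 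That is the whole proof.

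What you wrote is essentially the argument for Lemma~\ref{lem:cof} (cofibrancy of arbitrary objects of $\mathrm{pr}\M$), specialized to $M$. That is valid but circular as a reference (Lemma~\ref{lem:cof} uses Lemma~\ref{lem:relev}), and as a direct argument your ``second correction'' step---modifying $\gamma_0$ so that it kills $M_1\to M_0$ \emph{while preserving} $f\gamma_0=\alpha\pi$---is the genuinely delicate point you flag, and your sketch does not close it: subtracting $\sigma\tau'$ to kill $M_1$ perturbs the equation $f\gamma_0=\alpha\pi$ by $f\sigma\tau'$, and fixing that perturbation needs exactly the one-step argument the paper uses. So rather than running the two-stage argument and then patching it, just apply the one-step argument directly at $M$.
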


\begin{proof}
As $f$ is a weak equivalence, there exists $\tilde{\beta}:M \to X$, $\iota_M:M \to I_M$ and $\gamma:I_M \to Y$ such that $\alpha+\gamma \circ \iota_M=f \circ \tilde{\beta}$. As $f \in J^\square$ and $I_M \in \mho \M$, there exists $\delta:I_M \to X$ such that $f \circ \delta=\gamma$. Then we have \[\alpha=f \circ (\tilde{\beta}-\delta \circ \iota_M).\]
\end{proof}

\begin{lem}\label{lem:caracfibex}
Assume that $\M$ contains the projective objects. Let $f:X \to Y$ be an acyclic fibration. Then it is automatically a deflation.
\end{lem}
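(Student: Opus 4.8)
The plan is to show that a trivial fibration $f : X \to Y$ is a deflation by exploiting the defining lifting property against the generating set $J = \{0 \to \mho M : M \in \M\}$, together with the structure of $\E$ as a weakly idempotent complete exact category with enough projectives, and the fact that $f$ being a weak equivalence gives us surjectivity-type information at the level of $\overline{\E}(-,-)|\overline{\M}$. The key idea is that, since $\M$ is contravariantly finite and contains the injective and projective objects, I can build a deflation onto $Y$ out of an object in $\mathrm{pr}\M$ (or simply from a projective), and then use the lifting property of $f$ to factor that deflation through $f$; by the weak idempotent completeness characterization (if $g \circ f$ is a deflation then $g$ is), this will force $f$ to be a deflation.

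First I would take a projective object $P$ together with a deflation $p : P \twoheadrightarrow Y$ (this exists since $\E$ has enough projectives, or more directly by taking an $\M$-approximation of $Y$ which is a deflation by the argument used at the start of Lemma \ref{lem:prmapp}, using that $\M$ contains the projectives). Next I would want to lift $p$ through $f$, i.e.\ find $q : P \to X$ with $f \circ q = p$. To get this lift I would set up the commutative square with $0 \to P$ on the left — but that is not in $J$, so instead I would argue via Lemma \ref{lem:relev}: since $P$ can be taken inside $\M$ (as $\M$ contains all projectives) and $f$ is a trivial fibration, Lemma \ref{lem:relev} gives for the map $p : P \to Y$ (viewing $P$ as an object of $\M$) a morphism $q : P \to X$ with $f \circ q = p$. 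Then $f \circ q = p$ is a deflation, and by the weak idempotent completeness of $\E$ (the corollary of Bühler's theorem quoted in the preliminaries: if $g \circ f$ is a deflation then so is $g$), it follows that $f$ is a deflation.

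The step I expect to be the main obstacle is the very first reduction: making sure one can choose the covering object of $Y$ to lie in $\M$ (not merely a projective, but an object against which Lemma \ref{lem:relev} applies), and checking that the resulting map is genuinely a deflation. If $\M$ is only assumed to contain the injectives in the statement of this lemma, then one cannot use projectives directly and must instead produce a deflation onto $Y$ from an $\M$-approximation; one then needs that an $\M$-approximation $M_0 \to Y$ is a deflation, which requires either $\M \supseteq \mathrm{Proj}$ or an argument that the cokernel of the relevant map vanishes — this is the delicate point and the place where the hypotheses in force must be invoked carefully. Once a deflation $M_0 \twoheadrightarrow Y$ with $M_0 \in \M$ is in hand, the rest is immediate: Lemma \ref{lem:relev} lifts it through $f$, and the weakly idempotent complete cancellation property for deflations finishes the proof.
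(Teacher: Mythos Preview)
Your proposal is correct and follows essentially the same route as the paper: take a projective cover $P_Y \twoheadrightarrow Y$, note $P_Y \in \M$, apply Lemma~\ref{lem:relev} to lift it through $f$, and conclude via the weak idempotent completeness cancellation property (Bühler, Proposition~7.6). Your worry about needing $\M \supseteq \mathrm{Proj}$ is legitimate and is exactly the implicit hypothesis the paper also uses here; indeed, the places where this lemma is later invoked (e.g.\ Lemma~\ref{lem:cof}) explicitly add that assumption.
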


\begin{proof}
Let $P_Y$ be a projective cover of $Y$. As $P_Y \in \M$, and $f$ is an acyclic fibration, from the previous lemma, there exists a lift from $P_Y \to X$. From Buhler in \cite[Proposition 7.6, (ii)]{B}, as $P_Y \to Y$ is a deflation, then $f$ is a deflation.
\end{proof}

\subsection{Factorization}

Let us now show a characterization of the morphisms of $^\square (J^\square)$.

\begin{lem}\label{lem:jperpperp}
Suppose that $\mho M \to Y$ is a right $\mho \M$-approximation. A morphism $f : X \to Y$ is in $^\square (J^\square)$ if and only if it is a retract of the canonical injection $X \to X \oplus \mho M$.
\end{lem}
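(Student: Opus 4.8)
The plan is to run the classical \emph{retract argument}. The only input specific to our situation is an explicit factorization of $f$ as (canonical injection) followed by a fibration, built from the given right $\mho\M$-approximation; everything else is formal.

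\medskip
\noindent\textbf{The ``if'' direction.} Write $\iota\colon X\to X\oplus\mho M$ for the canonical injection. It lies in ${}^\square(J^\square)$: the map $(0\to\mho M)$ belongs to $J$, a biproduct is the pushout of $0\to\mho M$ along $0\to X$, and ${}^\square(J^\square)\supseteq J$ is closed under pushouts. Equivalently one checks the lifting property by hand --- given a fibration $q\colon P\to Q$ and a commutative square with legs $a\colon X\to P$ (top) and $b\colon X\oplus\mho M\to Q$ (bottom), use $q\in J^\square$ against $(0\to\mho M)$ to lift $b|_{\mho M}$ to some $c\colon\mho M\to P$, and then $[\,a\ \ c\,]\colon X\oplus\mho M\to P$ is the required lift. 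Since classes of the form ${}^\square(-)$ are closed under retracts, every retract of $\iota$ lies in ${}^\square(J^\square)$.

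\medskip
\noindent\textbf{Factoring $f$.} Let $g\colon\mho M\to Y$ be the given right $\mho\M$-approximation and put $p:=[\,f\ \ g\,]\colon X\oplus\mho M\to Y$, so that $p\circ\iota=f$. I claim $p\in J^\square=\Fib$. A lifting problem for $p$ against an element $(0\to\mho M')$ of $J$ (with $M'\in\M$) is simply the demand that a given $\beta\colon\mho M'\to Y$ factor through $p$. As $\mho M'\in\mho\M$ and $g$ is a right $\mho\M$-approximation, $\beta=g\circ h$ for some $h\colon\mho M'\to\mho M$; then $\binom{0}{h}\colon\mho M'\to X\oplus\mho M$ satisfies $p\circ\binom{0}{h}=g\circ h=\beta$. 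Hence $p$ is a fibration and $f=p\circ\iota$ is the desired factorization of $f$ into a member of ${}^\square(J^\square)$ followed by a fibration.

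\medskip
\noindent\textbf{The ``only if'' direction (retract argument).} Assume $f\in{}^\square(J^\square)$. Applying the lifting property of $f$ against the fibration $p$ of the previous step to the square with top $\iota$, left $f$, right $p$, bottom $\mathrm{id}_Y$ produces $s\colon Y\to X\oplus\mho M$ with $s\circ f=\iota$ and $p\circ s=\mathrm{id}_Y$. Then
\[\xymatrix{
X \ar@{=}[r] \ar_f[d] & X \ar^\iota[d] \ar@{=}[r] & X \ar^f[d] \\
Y \ar_s[r] & X\oplus\mho M \ar_p[r] & Y
}\]
commutes (using $s\circ f=\iota$ and $p\circ\iota=f$), exhibiting $f$ as a retract of $\iota\colon X\to X\oplus\mho M$. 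Together with the first paragraph this proves the equivalence.

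\medskip
\noindent\textbf{Main obstacle.} There is no serious difficulty beyond the bookkeeping: the heart of the matter is recognizing that $p=[\,f\ \ g\,]$ is a fibration, and this uses nothing more than the defining property of a right $\mho\M$-approximation together with the fact that every object of $\mho\M$ has the form $\mho M'$ with $M'\in\M$. The one point to watch is that the hypothesis is precisely that \emph{$\mho M\to Y$ is a right $\mho\M$-approximation} (not merely, say, a deflation), since this is exactly what is needed to solve all lifting problems coming from $J$.
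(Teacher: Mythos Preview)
Your proof is correct and follows essentially the same approach as the paper: both factor $f$ as $p\circ\iota$ with $p=[\,f\ \ g\,]$, verify $p\in J^\square$ using the $\mho\M$-approximation property, and then run the retract argument by lifting $f$ against $p$ over $\mathrm{id}_Y$. Your treatment of the ``if'' direction is actually more complete than the paper's, which simply invokes closure of ${}^\square(-)$ under retracts without explicitly noting that $\iota$ itself lies in ${}^\square(J^\square)$.
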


\begin{proof}
Let $f:X \to Y \in ^\square(J^\square)$. Let $\alpha:\mho M \to Y$ be a $\mho \M$-approximation of $Y$. Then, we have the following commutative square:

\[ \xymatrix{X \ar^{(^1_0)}[r] \ar_f[d] & X \oplus \mho M \ar^{(f~\alpha)}[d] \\ Y \ar@{=}[r] \ar@{.>}^s[ur] & Y}\]
The morphism $(f~\alpha)$ belongs to $J^\square$. Indeed, if $\mho M' \to Y$ is a morphism, as $\alpha$ is a $\mho \M$-approximation, there exists a lift as wanted (which is zero on $X$).
As $f \in ^\square(J^\square)$, there exists $s:Y \to X \oplus \mho M$ which makes both triangles commute. Then, $f$ is a retract of the canonical injection $X \to X \oplus \mho M$.

Conversely, it is well-known that the lifting property $\square$ is stable under retract.

%
%
\end{proof}

\begin{lem}\label{cor:fact}
Under the assumption that there exist some $\mho \M$-approximations, any morphism can be factorized through a morphism in $^\square (J^\square)$ followed by a morphism in $J^\square$.
\end{lem}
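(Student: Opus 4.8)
The statement to prove: any morphism $f : X \to Y$ factors as $X \xrightarrow{g} Z \xrightarrow{p} Y$ with $g \in {}^\square(J^\square)$ and $p \in J^\square$.

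Given the previous lemma (jperpperp), which characterizes $^\square(J^\square)$ as retracts of canonical injections $X \to X \oplus \mho M$ (where $\mho M \to Y$ is a $\mho\M$-approximation), the natural candidate for the factorization is staring at us: take $Z = X \oplus \mho M$ where $\alpha : \mho M \to Y$ is a right $\mho\M$-approximation of $Y$, let $g = \binom{1}{0} : X \to X \oplus \mho M$, and let $p = (f \; \alpha) : X \oplus \mho M \to Y$. So the plan is to verify this works.

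First I would check $g \in {}^\square(J^\square)$: but $g$ is literally the canonical injection $X \to X \oplus \mho M$ with $\mho M \to Y$ a $\mho\M$-approximation, so by Lemma \ref{lem:jperpperp} it lies in $^\square(J^\square)$ — it is a retract of itself. Second, I would check $p = (f \; \alpha) \in J^\square$: this is exactly the computation already performed inside the proof of Lemma \ref{lem:jperpperp}. Given a morphism $j : 0 \to \mho M'$ in $J$ and a commutative square with bottom map $\mho M' \to Y$, since $\alpha : \mho M \to Y$ is a $\mho\M$-approximation the map $\mho M' \to Y$ factors through $\alpha$, giving a lift $\mho M' \to X \oplus \mho M$ landing in the $\mho M$ summand (zero on $X$); one checks both triangles commute. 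Hence $p \in J^\square = \Fib$. Finally $p \circ g = (f\;\alpha)\binom{1}{0} = f$, so this is the desired factorization.

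I don't expect a serious obstacle here: the essential content was already extracted in Lemma \ref{lem:jperpperp}, and this lemma is essentially assembling those pieces. The only point requiring the hypothesis ``there exist some $\mho\M$-approximations'' is precisely the choice of $\alpha$, and the only subtlety is making sure the lift constructed for $p \in J^\square$ really is compatible with both commutativity constraints of the lifting square (the upper triangle is automatic since the domain of $j$ is $0$). So the proof is short: state the factorization $f = (f\;\alpha)\circ\binom{1}{0}$, invoke Lemma \ref{lem:jperpperp} for $g \in {}^\square(J^\square)$, and repeat (or cite) the one-line verification that $(f\;\alpha) \in J^\square$.
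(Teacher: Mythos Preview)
Your proposal is correct and follows essentially the same approach as the paper: the paper also factors $f$ as $X \xrightarrow{\binom{1}{0}} X \oplus \mho M \xrightarrow{(f\;\alpha)} Y$ with $\alpha$ a $\mho\M$-approximation, invokes Lemma~\ref{lem:jperpperp} for the first map (as a retract of itself), and uses the approximation property of $\alpha$ to show $(f\;\alpha) \in J^\square$.
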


\begin{proof}
Let $f:X \to Y$ be a morphism. It factorizes through $X \to X \oplus \mho M \to Y$ by $\begin{pmatrix}1 \\ 0 \end{pmatrix}$ and $\begin{pmatrix}f & \alpha \end{pmatrix}$, where $\alpha$ is a $\mho \M$-approximation. The first morphism is in $^\square(J^\square)$. As $\alpha$ is a $\mho \M$-approximation, then $\begin{pmatrix}f & \alpha \end{pmatrix}$ satisfies the lifting property defining $J^\square$. Then $\begin{pmatrix}f & \alpha \end{pmatrix} \in J^\square$.
\end{proof}

\subsection{Cofibrant objects and homotopy}

\subsubsection{Cofibrant objects}

We call an object "cofibrant" objects if it lifts along all morphisms in $J^\square \cap \W$. Fibrant objects are defined dually.

In this subsection, we characterize fibrant and cofibrant objects.

\begin{lem}\label{lem:fib}
Any object is fibrant.
\end{lem}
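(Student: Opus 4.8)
The plan is to show that for every object $X\in\E$, the unique morphism $X\to 0$ to the terminal (= zero) object lies in $J^\square$. Unwinding the definition of $\Fib=J^\square$, this amounts to checking the lifting property against the generating set $J=\{0\to \mho M\mid M\in\M\}$: given any commutative square
\[ \xymatrix{ 0 \ar[r]\ar[d] & X \ar[d] \\ \mho M \ar[r] \ar@{.>}[ur] & 0,} \]
one must produce a morphism $\mho M\to X$ making the upper triangle commute (the lower triangle commutes automatically since $0$ is terminal). So concretely the task reduces to: for every $M\in\M$ and every $N\in\mho\M$, every such square admits a diagonal filler, i.e. we simply need \emph{some} morphism $N\to X$ — with no constraint from the bottom map, since it is forced to be zero.

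First I would observe that $N=\mho M$ sits in a short exact sequence $0\to M\to I\to N\to 0$ with $I$ injective, by definition of $\mho\M$; but in fact nothing more is needed. The key point is trivial once the square is inspected: the map $0\to X$ along the top of the square is the zero map, and the square commutes with the diagonal chosen to be the zero morphism $\mho M\to X$. Indeed the upper triangle reads $0\to \mho M\xrightarrow{0} X$ versus $0\to X$, and both composites out of the initial object $0$ agree automatically. Hence $0$ is a diagonal filler. Therefore every square of the required form lifts, so $X\to 0\in J^\square=\Fib$, i.e. $X$ is fibrant.

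There is essentially no obstacle here: the statement is a formality, reflecting the fact that the source of every morphism in $J$ is the zero object, so the lifting condition imposes nothing. The only thing worth being careful about is matching conventions — namely that "fibrant" is defined (as in axiom \textbf{F1} and the surrounding discussion) as having fibrant structure map to the terminal object $0$, and that $0$ is indeed terminal in $\E$ (it is the zero object of the additive category $\E$, which exists and is both initial and terminal). Granting that, the proof is the one-line argument above: the filler is the zero morphism.
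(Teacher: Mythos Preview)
Your proof is correct and follows the same approach as the paper, which simply states that $X\to 0$ is a fibration for any $X$. You have merely unpacked this one-line observation by explicitly verifying the lifting property against $J$ and noting that the zero morphism serves as a filler.
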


\begin{proof}
For any $X \in \E$, the map $X \to 0$ is a fibration.
\end{proof}

\begin{lem}\label{lem:cof}
Suppose that the subcategory $\M$ contains the projective objects. Let $C \in \E$. Then $C$ is cofibrant if and only if $C \in \mathrm{pr}\M$.
\end{lem}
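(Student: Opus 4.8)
The plan is to unwind the definition of cofibrancy recorded just above: $C$ is cofibrant exactly when $0\to C$ has the left lifting property against all trivial fibrations, i.e. when for every trivial fibration $f:X\to Y$ and every $g:C\to Y$ there is $\bar h:C\to X$ with $f\bar h=g$; I will prove the two implications separately. Suppose first that $C\in\mathrm{pr}\M$, and fix a short exact sequence $0\to M_1\xrightarrow{i}M_0\xrightarrow{p}C\to 0$ with $M_0,M_1\in\M$. Given a trivial fibration $f:X\to Y$ and $g:C\to Y$, Lemma~\ref{lem:relev} applied to $gp:M_0\to Y$ (legitimate since $M_0\in\M$) produces $h_0:M_0\to X$ with $fh_0=gp$. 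Then $fh_0i=gpi=0$; since $f\in\W$ the induced map $\overline{\E}(M_1,X)\to\overline{\E}(M_1,Y)$ is injective, so $h_0i$ factors through an injective object, and, extending that factorization along an inflation $\iota:M_1\hookrightarrow I$ of $M_1$ into an injective $I$, we may write $h_0i=\beta\iota$ with $\beta:I\to X$. Since $I$ is injective and $i$ is an inflation, extend $\iota$ along $i$ to $j:M_0\to I$ with $ji=\iota$, and set $h_0':=h_0-\beta j$; then $h_0'i=0$, so $h_0'=\bar h p$ for a unique $\bar h:C\to X$.

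It remains to check that the discrepancy $g-f\bar h$ is correctable. From $fh_0'=gp-(f\beta)j$ and $p$ epic we get $f\bar h=g-\psi$, where $\psi:C\to Y$ is the unique map with $\psi p=(f\beta)j$ (it exists because $(f\beta)ji=(f\beta)\iota=fh_0i=0$). Now $(f\beta)\iota=0$ forces $f\beta$ to factor as $f\beta=\lambda\pi_L$ through the deflation $\pi_L:I\to L:=\mathrm{coker}(\iota)$, and $L$ is of the form $\mho M_1$, hence $L\in\mho\M$. Moreover $(\pi_Lj)i=\pi_L\iota=0$, so $\pi_Lj=\tilde j p$ for some $\tilde j:C\to L$, and therefore $\psi p=(f\beta)j=\lambda\pi_Lj=\lambda\tilde j p$, whence $\psi=\lambda\tilde j$ factors through $L\in\mho\M$. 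Since $f\in\Fib=J^\square$ and the morphism $0\to L$ belongs to $J$, the map $\lambda:L\to Y$ lifts along $f$ to some $\mu:L\to X$ with $f\mu=\lambda$; putting $\psi^{\wedge}:=\mu\tilde j:C\to X$ gives $f\psi^{\wedge}=\psi$, so $f(\bar h+\psi^{\wedge})=g$. Thus every $g:C\to Y$ lifts, and $C$ is cofibrant.

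Conversely, suppose $C$ is cofibrant; here the hypothesis that $\M$ contains the projective objects is used. By Lemmas~\ref{lem:prmapp} and~\ref{lem:prmfib} there exist $A\in\mathrm{pr}\M$ and a trivial fibration $h:A\to C$. Cofibrancy of $C$ lifts $\mathrm{id}_C$ along $h$ to a section $s:C\to A$ with $hs=\mathrm{id}_C$. Since $\E$ is weakly idempotent complete, $s$ is an inflation and the resulting short exact sequence $0\to C\xrightarrow{s}A\to\mathrm{coker}(s)\to 0$ splits, so $C$ is a direct summand of $A\in\mathrm{pr}\M$. As $\mathrm{pr}\M$ is closed under direct summands — a standard consequence of $\M$ being rigid and stable under direct sums and summands — we conclude $C\in\mathrm{pr}\M$.

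The delicate point is the first implication, and within it the assertion that the error term $\psi$ arising from the naive lift can be taken to factor through $\mho\M$: this is precisely what makes it removable via the fibration property $f\in J^{\square}$. The rest is routine bookkeeping with short exact sequences together with the lifting of trivial fibrations along objects of $\M$ (Lemma~\ref{lem:relev}).
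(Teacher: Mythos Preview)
Your proof is correct and follows the same overall strategy as the paper's: lift along $f$ at $M_0$ via Lemma~\ref{lem:relev}, use injectivity of $Gf$ at $M_1$ to kill $h_0i$ modulo injectives, descend to a map $C\to X$, and then correct the residual error using $f\in J^\square$; for the converse, both arguments split the $\mathrm{pr}\M$-approximation of Lemmas~\ref{lem:prmapp}--\ref{lem:prmfib} to exhibit $C$ as a summand. The one noteworthy difference is that where the paper invokes the Demonet--Liu equivalence $\mathrm{pr}\M/\mho\M\simeq\mathrm{mod}\,\overline{\M}$ to see that the error term lies in the ideal $(\mho\M)$, you instead show directly that $\psi$ factors through $\mho M_1=\mathrm{coker}(\iota)$, making your forward implication self-contained.
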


\begin{proof}
Let $C \in \mathrm{pr}\M$. We introduce $\xymatrix{0 \ar[r] & M_1 \ar^{h'}[r] & M_0 \ar^h[r] & C \ar[r] & 0}$. Let $f:X \to Y$ be an acyclic fibration and $b:C \to Y$. As $f \in \W$, $Gf$ is an isomorphism, and there exists from Lemma \ref{lem:relev} a morphism $a:M_0 \to X$ such that \[f \circ a = b \circ h.\] Since $\M$ contains all the projective objects, Lemma \ref{lem:caracfibex} shows that $f$ is a deflation. Let $k:K \to X$ be the kernel of $f$. We then have a morphism of short exact sequences:
\[ \xymatrix{M_1 \ar^c[r] \ar_{h'}[d] & K \ar^k[d] \\ M_0 \ar_h[d] \ar^a[r] & X \ar^f[d] \\ C \ar_b[r] & Y}\]
As $k \in \overline{\M}^\perp$, there exists $I$ an injective object, $\alpha:M_1 \to I$ and $\beta:I \to X$ such that \[k \circ c= \beta \circ \alpha.\]
As $h'$ is an inflation, there exists $\beta':M_0 \to I$ such that \[\beta' \circ h'=\alpha.\]
\[ \xymatrix{M_1 \ar^c[rr] \ar_{h'}[dd] \ar_\alpha[dr] & & K \ar^k[dd] \\ & I \ar_\beta[dr] & \\ M_0 \ar_h[dd] \ar^a[rr] & &  X \ar^f[dd] \\ & & \\ C \ar_\gamma[uurr] \ar_b[rr] & & Y}\]
So, \[h' \circ a =\beta \circ \beta' \circ h'\] and there exists $\gamma:C \to X$ such that \[\gamma \circ h= \beta \circ \beta' + a.\]
Then, \[f \circ \gamma \circ h=f \circ a + f \circ \beta \circ \beta'\] and \[(f \circ \gamma-b) \circ h=f \circ \beta \circ \beta'.\] Then, in $\mathrm{mod}\overline{\M}$, we have the good lifting. By Demonet and Liu in \cite{DL}, we have that \[\mathrm{mod}\overline{\M} \simeq \mathrm{pr}\M/\mho \M.\] As $C \in \mathrm{pr}\M$, there exists $M \in \M$ such that the morphism $b$ factorizes through $\mho M$, let us say \[\xymatrix{& \mho M \ar^\varepsilon[dr] & \\ C \ar_b[rr] \ar^\delta[ur] & & Y}\] such that $\varepsilon \circ \delta=b$.
As $f \in J^\square$, there exists $\iota:\mho M \to X$ such that $f \circ \iota=\varepsilon$. Then $\iota \circ \delta:C \to X$ is the good candidate to lift $b$, it means that \[f \circ \iota \circ \delta=\varepsilon \circ \delta=b.\]

On the other hand, let $X$ be a cofibrant object.

We can say that the $\mathrm{pr} \M$-approximation is an acyclic fibration from Lemma \ref{lem:prmfib}, then a retraction, and $X$ is a direct summand of $A$.

But, let us draw another proof. Let \[0 \to Y \to M_0 \to X \to 0\] be an $\M$-approximation of $X$. We have the following diagram:
\[ \xymatrix{
0 \ar[r] & Y \ar@{=}[d] \ar[r] & M_0 \ar[d] \ar[r] & X \ar[r] \ar@{.>}[d] & 0 \\
0 \ar[r] & Y \ar[r] & I_Y \ar[r] & \mho Y \ar[r] & 0} \]
There exists a morphism $X \to \mho Y$ which induces a morphism of short exact sequences. Let \[0 \to \mho M_Y \to \mho Y \to \mho^2 Z \to 0\] be an $\mho \M$-approximation of $\mho Y$. We can suppose that the morphism $\mho M_Y \to \mho Y$ is an inflation (we can add a copy of the injective envelope $I_{\mho M_Y}$ of $\mho M_Y$ if necessary to $\mho Y$). We have:
\[ \xymatrix{
0 \ar[r] & Y \ar@{=}[dd] \ar[r] & M_0 \ar[dd] \ar[r] & X \ar[r] \ar@{.>}[dd]  \ar@{.>}^{\exists \psi}[dr] & 0 & 0 \ar[dl] \\
& & & & \mho M_Y \ar_a[dl] & \\
0 \ar[r] & Y \ar[r] & I_Y \ar[r] & \mho Y \ar[r] \ar^b[dl] \ar[d] & 0 & \\
& & \mho^2 Z \ar[dl] & \mho M_0 \ar@{.>}^{\exists \varphi}[l] & & \\
& 0 & & & &} \]
We can add a copy of $\mho Y$ if necessary to have $a$ a weak equivalence.

As $X$ is cofibrant, there exists $\psi:X \to \mho M_Y$ such that $a \circ \psi=\alpha$. Then \[b \circ \alpha=b \circ a \circ \psi=0.\] Then there exists $\varphi: \mho M_0 \to \mho^2 Z$, which makes the triangle commute. However, from Wakamatsu's lemma, the object $\mho Z \in \overline{\M}^\perp$, so $\mho^2 Z \in \mho \overline{\M}^\perp$. Then $\varphi=0$ and $\mho Y \to \mho^2 Z=0$ and there is a section to the morphism $a$. Then $Y \in \M$ and $X \in \mathrm{pr}\M$.
\end{proof}

\subsubsection{Homotopy}

We recall here cylinder objects and left homotopies.

\begin{defi}
Let $X \in \E$. A cylinder object for $X$ is a factorization of the morphism $\nabla:X \oplus X \to X$ (which is the identity on each copy of $X$) through $X'$, where $X' \to X$ is a weak equivalence.

Let $f,g:X \to Y$ be two morphisms. A left homotopy from $f$ to $g$ is a morphism $h:X' \to Y$, where $X'$ is a cylinder object for $X$, such that $h \circ (\nabla_1 ~\nabla_2)=(f~g)$ where $(\nabla_1 ~\nabla_2)$ is the morphism $X \oplus X \to X'$ in the factorization of $\nabla$.
\end{defi}

Dually, we can define path objects and right homotopies.

\begin{defi}
Let $Y \in \E$. A path object for $Y$ is a factorization of the morphism $\Delta:Y \to Y \oplus Y$ (which is the identity on each copy of $Y$) through $Y'$, where $Y \to Y'$ is a weak equivalence.

Let $f,g:X \to Y$ be two morphisms. A right homotopy from $f$ to $g$ is a morphism $k:X \to Y'$, where $Y'$ is a path object for $Y$, such that $\begin{pmatrix}\Delta_1 \\ \Delta_2\end{pmatrix} \circ k=\begin{pmatrix}f\\g\end{pmatrix}$ where $\begin{pmatrix}\Delta_1 \\ \Delta_2\end{pmatrix}$ is the morphism $Y' \to Y$ in the factorization of $\Delta$.
\end{defi}

\begin{lem}\label{lem:htp}
For two morphisms $f$ and $g$ from an object $X$ to $Y$,$f$ and $g$ are homotopic if and only if $f-g$ factorizes through $\overline{\M}^\perp$ (meaning that there exists $X' \in \overline{\M}^\perp$ and $\alpha:X' \to Y$ and $\beta : X \to X'$ such that $f-g=\alpha \circ \beta$).
\end{lem}

\begin{proof}
We begin by noting this fact: in the next diagram, a factorization of $\Delta$ is a path object if and only if it is isomorphic to $\xymatrix{Y \ar^{\left(^1_0\right)}[r] & Y \oplus V \ar^{\left(^{1~1}_{c~d}\right)}[r] & Y \oplus Y}$ for a $V \in \overline{\M}^\perp$.

Indeed, let $\xymatrix{Y \ar^r[r] & Y' \ar[r] & Y \oplus Y}$ be a factorization of $\Delta$. As $r$ is a section and $\E$ is weakly idempotent complete, it is isomorphic to some $Y \to Y \oplus V$. Then $r \in \W$ if and only if $V \in \overline{\M}^\perp$.

Now, let us suppose that $f-g$ factorizes in the following way:
\[ \xymatrix{X \ar^{f-g}[r] \ar_{\alpha}[dr] & Y \\ & A \ar_{\beta}[u]}\] with $A \in \overline{\M}^\perp$. The object $Y \oplus A$ is a path object. Then we have
\[ \xymatrix{
X \ar^{\left(^g_\alpha\right)}[rr] \ar_{\left(^f_g\right)}[drr] & & Y \oplus A=Y' \ar_{\left(^{1~\beta}_{1~0}\right)}[d] & & Y \ar_{\left(^1_0\right)}[ll] \ar^{\Delta}[dll] \\
& & Y \oplus Y & & } \]
and the morphism $\begin{pmatrix} 1 & \beta \\ 1 & 0 \end{pmatrix}$ gives a homotopy between $f$ and $g$.

Conversely, if $f$ is homotopic to $g$, then there exists $k:X \to Y'$ such that \[\begin{pmatrix}\Delta_1 \\ \Delta_2 \end{pmatrix} \circ k= \begin{pmatrix} f \\ g \end{pmatrix}.\] Then, $f-g=(\Delta_1 - \Delta_2) \circ  k$. As $Y'=Y \oplus A$, and $A \in \overline{\M}^\perp$, this finishes to show the result.
\end{proof}

\begin{rmk}
The notions of left and right homotopy are the same for the cofibrant objects.
\end{rmk}

The following lemma is a corollary of the theorem of Demonet and Liu in \cite{DL}. However, we give a direct proof here.

\begin{lem}\label{lem:mho}
With the above notations, we have \[\mathrm{pr}\M \cap \overline{\M}^\perp = \mho \M.\]
\end{lem}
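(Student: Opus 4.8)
The plan is to prove the set-equality $\mathrm{pr}\M \cap \overline{\M}^\perp = \mho\M$ by a double inclusion, using the characterizations of the two sides in terms of short exact sequences and the rigidity of $\M$.

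For the inclusion $\mho\M \subseteq \mathrm{pr}\M \cap \overline{\M}^\perp$: an object $X \in \mho\M$ fits into a short exact sequence $0 \to M \to I \to X \to 0$ with $M \in \M$ and $I \in \mathrm{Inj} \subseteq \M$, so $X \in \mathrm{pr}\M$ immediately (take $M_1 = M$, $M_0 = I$). To see $X \in \overline{\M}^\perp$, I must show $\overline{\E}(N,X) = 0$ for every $N \in \M$, i.e.\ every morphism $N \to X$ factors through an injective object. Given $g : N \to X$, pull back the deflation $I \to X$ along $g$ to get a short exact sequence $0 \to M \to E \to N \to 0$; since $\mathrm{Ext}^1_\E(N,M) = \mathrm{Ext}^1_\E(\M,\M) = 0$ by rigidity (here $M \in \M$), this sequence splits, so $N$ is a summand of $E$ and hence $g$ lifts to a morphism $N \to I$, showing $g$ factors through the injective $I$. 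Thus $X \in \overline{\M}^\perp$.

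For the reverse inclusion $\mathrm{pr}\M \cap \overline{\M}^\perp \subseteq \mho\M$: let $X \in \mathrm{pr}\M \cap \overline{\M}^\perp$, with a presentation $0 \to M_1 \xrightarrow{h'} M_0 \xrightarrow{h} X \to 0$, $M_0, M_1 \in \M$. Choose an injective embedding $M_0 \xrightarrow{\iota} I$ with cokernel $\mho M_0 \in \mho\M$. Since $X \in \overline{\M}^\perp$ and $M_0 \in \M$, the morphism $h : M_0 \to X$ factors through an injective, hence (enlarging $I$ if necessary, or using that $I$ is injective) through $\iota$: there is $g : I \to X$ with $g \circ \iota = h$. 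Now form the pushout of $h'$ along $\iota$; using the dual of the Demonet--Liu lemma / the pushout lemma in the excerpt, one obtains a commutative diagram of short exact sequences showing that $X$ is the cokernel of an inflation from $M_1$ into $I$ (after absorbing $M_0$ into the pushout), i.e.\ $X \in \mho\M$. Concretely: the pushout square gives $0 \to M_1 \to I \oplus X \to P \to 0$ and, chasing the maps, $X$ is a summand of something of the form $\mho M_1$, and since $\M$ (hence $\mho\M$) is closed under summands, $X \in \mho\M$.

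The main obstacle I expect is the bookkeeping in the reverse inclusion: one must correctly arrange the pushout diagram so that the third object is visibly of the form $\mathrm{coker}(M_1 \to I)$ with $I$ injective, rather than merely an extension of $X$ by something. The cleanest route is probably to invoke the Demonet--Liu equivalence $\mathrm{pr}\M/\mho\M \simeq \mathrm{mod}\,\overline{\M}$ directly: an object $X \in \mathrm{pr}\M$ lies in $\mho\M$ iff it becomes zero in the quotient iff $G(X) = \overline{\E}(-,X)|_{\overline{\M}} = 0$ iff $X \in \overline{\M}^\perp$ --- which is exactly the claim. Since the statement explicitly says "we give a direct proof here," I would present the hands-on pushout argument above, but keep this conceptual remark as a sanity check. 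Rigidity of $\M$ (to split the relevant $\mathrm{Ext}^1$) and the closure of $\M$ under summands are the two structural hypotheses doing the real work.
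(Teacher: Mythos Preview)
Your argument for the inclusion $\mho\M \subseteq \mathrm{pr}\M \cap \overline{\M}^\perp$ is correct and is exactly what the paper means by ``the indirect inclusion is obvious since $\M$ is rigid.'' Your remark that the whole lemma follows conceptually from the Demonet--Liu equivalence is also correct; the paper explicitly says the same thing just before the statement.

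The reverse inclusion, however, has a genuine gap, not merely bookkeeping. You take an inflation $\iota : M_0 \to I$ with $I$ injective and (correctly) factor the deflation $h : M_0 \to X$ as $h = g\circ\iota$ using $X \in \overline{\M}^\perp$. But your claim that ``$X$ is the cokernel of an inflation from $M_1$ into $I$'' is false in general: the kernel of $g : I \twoheadrightarrow X$ sits in a short exact sequence $0 \to M_1 \to \ker g \to \mho M_0 \to 0$, and this extension need not split (its class lives in $\mathrm{Ext}^1(\mho M_0, M_1) \cong \overline{\E}(M_0,M_1)$, which has no reason to vanish). So $\ker g$ is typically not $M_1$, and $X$ is not directly exhibited as an object of $\mho\M$ this way. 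Your fallback sentence ``$X$ is a summand of something of the form $\mho M_1$'' is the right target, but nothing in the pushout you describe produces that splitting.

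The paper's direct argument fills precisely this gap. Setting $\mho M_1 := I/M_1$ and $\mho M_0 := I/M_0$, one gets a short exact sequence $0 \to X \xrightarrow{k} \mho M_1 \xrightarrow{\xi} \mho M_0 \to 0$ from the $3\times 3$ diagram. The factorization $c = \beta\circ\alpha$ (your $h = g\circ\iota$) is then used, together with the deflation $\psi : I \to \mho M_0$, to construct an explicit section $\varphi : \mho M_0 \to \mho M_1$ of $\xi$: one checks $(h - k\circ\beta)$ factors through $\mho M_0$ via some $\varphi$, and then $\xi\circ\varphi\circ\psi = \psi$ forces $\xi\circ\varphi = 1$ since $\psi$ is a deflation. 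This splitting exhibits $X$ as a direct summand of $\mho M_1 \in \mho\M$, and closure of $\mho\M$ under summands finishes the proof. That explicit section is the missing idea in your sketch.
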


\begin{rmk}
This is true even when $\M$ is not contravariantly finite. In addition, the direct inclusion is true even if $\M$ is not rigid.
\end{rmk}

\begin{proof}
The indirect inclusion only uses the rigidity of $\M$. Indeed, let $\mho M \in \mho \M$. As there exists a short exact sequence $0 \to M \to I \to \mho M$, then $\mho \M \in \mathrm{pr}\M$. Moreover, for any $M' \in \M$, $\overline{\E}(M',\mho M)|\overline{\M}=0$ since $\M$ is rigid.
Now, let  $X \in \mathrm{pr}\M \cap \overline{\M}^\perp$. As $X \in \mathrm{pr}\M$, we have a short exact sequence \[0 \to M_1 \to M_0 \to X \to 0,\] with $M_1,M_0 \in \M$. We have the following diagram:
\[\xymatrix{ M_1 \ar@{=}[d] \ar[r] & M_0 \ar^c[r] \ar^\alpha[d] & X \ar^k[d] \\
M_1 \ar[r] & I \ar_\psi[d] \ar_h[r] \ar@{.>}^\beta[ur] & \mho M_1 \ar^\xi[d] \\
& \mho M_0 \ar@{=}[r] \ar@{.>}_\varphi[ur] & \mho M_0}\]
In order to show that $X \in \mho \M$, we will show that the short exact sequence \[0 \to X \to \mho M_1 \to \mho M_0 \to 0\] splits. As $X \in \overline{\M}^\perp$, the morphism $c:M_0 \to X$ factorizes through $I$. We call by $\alpha:M_0 \to I$. There exists $\beta : I \to X$ such that $\beta \circ \alpha = c$. Let $k:X \to \mho M_1$ and $h:I \to \mho M_1$. Then, \[k \circ \beta \circ \alpha=k \circ c = h \circ \alpha.\] Then, $h-k \circ \beta$ factorizes through $\mho M_0$. There exists $\varphi:\mho M_0 \to \mho M_1$ and, if we call by $\psi:I \to \mho M_0$, then, we have \[h=k \circ \beta+\varphi \circ \psi.\]
Let $\xi:\mho M_1 \to \mho M_0$ in the short exact sequence (see diagram for sake of clarity). Then we have \[\xi \circ \varphi \circ \psi = \xi \circ (h-k \circ \beta) = \xi \circ h - \xi \circ k \circ \beta = \psi - 0.\] As $\psi$ is a deflation, we can conclude that $\xi \circ \varphi=1$.
Then, there is a section to the short exact sequence and $X \in \mho \M$ (which is stable under direct summands since $\M$ is stable under direct summands.
\end{proof}

\begin{rmk}
We have the inclusion $\mathrm{pr}\M \subseteq \mathrm{copr} \mho \M$. See Lemma \ref{lem:copr}.
\end{rmk}

\subsection{Prefibration structures from rigid subcategories}\label{sec:pref}

In this section, we show that an exact category $\E$ is nearly equipped with a structure of a prefibration category in the sense of Anderson-Brown-Cisinski (for more details, see the book of Radulescu-Banu, \cite{R}).

We recall from Demonet and Liu in \cite{DL} that \[\begin{array}{ccccc}
G & : & \E & \to & \mathrm{Mod} \overline{\M} \\
 & & X & \mapsto & \overline{\E}(-,X)|\overline{\M} \\
\end{array}\] induces the following equivalence of categories \[ \mathrm{pr}\M/\mho \M \simeq \mathrm{mod}~\overline{\M}. \]

\begin{theo}\label{th:prefib}
Let $\E$ be a weakly idempotent complete exact category with enough injective and projective objects. Assume that $\M \subseteq \E$ is a rigid, contravariantly finite subcategory of $\E$ containing all the injective and projective objects, and stable under taking direct sums and summands. Suppose moreover that $\mho \M$ is contravariantly finite. Let \[J= \{ f : 0 \to \mho M, M \in \M \}\] and $J^\square$ be the class of fibrations. Let $\W$ be the class of morphisms whose images under the functor $G$ are isomorphisms.

Then $\E$ is almost equipped with the structure of a prefibration category, meaning that there exist two classes of morphisms, $\W$, the weak equivalences, and $\Fib$ the fibrations, such that:
\begin{enumerate}[label=(\roman*)]
\item The space $\W$ is stable under retracts and satisfies the two out of three axiom.
\item The space $\Fib$ is stable under composition, and all isomorphisms are fibrations.
\item Pullbacks exist along fibrations, and the pull-back of a fibration is a fibration. Moreover, if $\xymatrix{0 \ar[r] & A \ar^i[r] & B \ar^p[r] & Y \ar[r] & 0}$ is a short exact sequence, if $Gi$ is a monomorphism and $f:X \to Y$ is an acyclic fibration ($f \in \Fib \cap \W$), then $h$ defined by the following pull-back is an acyclic fibration.

\[\xymatrix{& E \ar_h[d] \ar^a[r] & X \ar^f[d] \\
A \ar_i[r] & B \ar_p[r] & Y}\]

\item There exist path objects : for any object $X$, the diagonal map $X \to X \oplus X$ can be factorized through $X'$, where the first morphism is a weak equivalence.

\item For any object $B$, the morphism $0 \to B$ is a fibration.
\end{enumerate}
\end{theo}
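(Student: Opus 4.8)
## Proof Proposal

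\textbf{Overall strategy.} The statement is a package of five axioms, and the plan is to assemble them from the lemmas already proved in Sections 1--4. Items (i), (ii), (iv), (v) are essentially immediate; the real content is item (iii), the stability of (trivial) fibrations under pullback along deflations. I would organize the proof as: first dispose of the easy axioms by citation, then treat the pullback axiom in two stages (existence of the pullback and fibration-stability first, then the trivial-fibration refinement).

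\textbf{The easy axioms.} For (v), $0 \to B$ lies in $J^\square$ trivially since there is nothing to lift against (this is Lemma \ref{lem:fib}). For (ii), $\Fib = J^\square$ is closed under composition and contains all isomorphisms by the standard formal properties of the class $\mathcal{A}^\square$ for any $\mathcal{A}$ — so I would just invoke that. For (i), the two-out-of-three property of $\W$ follows from the fact that $G$ is a functor and isomorphisms in $\mathrm{Mod}\,\overline{\M}$ satisfy two-out-of-three; stability under retracts holds because $G$ preserves retracts and a retract of an isomorphism is an isomorphism. For (iv), I would invoke Lemma \ref{lem:htp} (and the remark preceding it in its proof): for any $V \in \overline{\M}^\perp$ — e.g. $V$ injective, which always exists by "enough injectives" — the object $Y' = Y \oplus V$ with $Y \to Y \oplus V$ the canonical section and $Y \oplus V \to Y \oplus Y$ the map $\left(\begin{smallmatrix}1 & 1\\ 0 & 0\end{smallmatrix}\right)$ factors the diagonal, and the first map is a weak equivalence precisely because $V \in \overline{\M}^\perp$ so $G$ kills it.

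\textbf{The pullback axiom (iii).} This is where the work is. For the existence of the pullback of $f:X\to Y$ along the deflation $p:B\to Y$: since $p$ is a deflation, axiom E2' of the exact structure gives that the pullback $E$ exists and $h:E\to X$ is again a deflation, fitting into a short exact sequence $0\to A\to E\to X\to 0$ with the same kernel $A$. I would next check $h\in J^\square$: given a square with left edge $0\to \mho M$ and right edge $h$, one must lift $\mho M \to E$; composing to $X$ and using that $\mho M \in \mathrm{pr}\M$ together with... actually the cleaner route is the pullback universal property: a map $\mho M \to E$ is the same as a compatible pair $(\mho M \to X,\ \mho M \to B)$, and since $f$ is a fibration one first lifts the square against $f$ to get $\mho M \to X$, pairs it with $0\to\mho M\to\cdots$ composed appropriately through $B$, then uses the universal property of $E$. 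I expect the bookkeeping here to be the first genuine obstacle — making the two lifts compatible so they glue through the pullback. For the trivial-fibration refinement: assuming $f\in\Fib\cap\W$ and $Gi$ mono, I must show $Gh$ is an isomorphism. Applying $G$ to the short exact sequences $0\to A\to E\to X\to 0$ and $0\to A\to B\to Y\to 0$ and using that $G$ is (left exact on these, or at least) sends these to exact-enough sequences in $\mathrm{Mod}\,\overline{\M}$, together with $Gf$ iso and $Gi$ mono, a diagram chase (five-lemma style) forces $Gh$ iso. Surjectivity of $Gh$ uses $Gf$ surjective plus the pullback; injectivity uses $Gf$ injective plus $Gi$ injective.

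\textbf{Main obstacle.} The crux is verifying that $G$ behaves well enough on the relevant short exact sequences to run the diagram chase in (iii) — i.e. controlling the failure of exactness of $\overline{\E}(-,-)|\overline{\M}$ — and simultaneously that the gluing of lifts through the pullback really produces an element of $J^\square$. I would isolate this as the one place needing care; everything else is formal nonsense about the classes $J^\square$ and $\W$ plus the exact-category axiom E2'.
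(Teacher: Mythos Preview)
Your proposal is correct and follows essentially the same route as the paper. The paper also disposes of (i), (ii), (v) formally, and for (iii) proves exactly the lemma you sketch: $h\in J^\square$ via the pullback universal property together with $f\in J^\square$, then $Gh$ mono by a left-exactness diagram chase using $Gf$ iso and $Gi$ mono, and $Gh$ epi by lifting $M\to B\to Y$ through $f$ (Lemma~\ref{lem:relev}) and gluing via the pullback --- which is precisely your ``$Gf$ surjective plus the pullback''.

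Two small points. First, in (iv) your matrix $\left(\begin{smallmatrix}1&1\\0&0\end{smallmatrix}\right)$ does not compose with $\left(\begin{smallmatrix}1\\0\end{smallmatrix}\right)$ to give the diagonal; you want the first column to be $\left(\begin{smallmatrix}1\\1\end{smallmatrix}\right)$. Second, the paper obtains (iv) by applying the general factorization of Lemma~\ref{cor:fact} (hence invoking the contravariant-finiteness of $\mho\M$), whereas you build the path object directly with an injective $V$; your shortcut is fine for the bare statement of (iv), but note that the paper's route also yields the second map in $J^\square$, which is the form actually used downstream. Finally, watch your labels in (iii): in the theorem's diagram $h\colon E\to B$ and $a\colon E\to X$, and it is $a$ (the pullback of the deflation $p$) that sits in the short exact sequence $0\to A\to E\to X\to 0$.
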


Let us first show the following lemma:

\begin{lem}
Let \[\xymatrix{& E \ar_h[d] \ar^a[r] & X \ar^f[d] \\
A \ar_i[r] & B \ar_p[r] & Y}\] where $(i,p)$ is a short exact sequence, $Gi$ a monomorphism and $f \in J^\square \cap \W$. Then $h \in J^\square \cap \W$.
\end{lem}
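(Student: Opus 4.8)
My plan is to dispense with the fibration part formally and then check the weak‑equivalence part by hand, using the lifting characterisation of fibrations rather than any exactness (note that $h$, like $f$, need not be a deflation).

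\emph{Setup and $h\in J^\square$.} First I would note that since $(i,p)$ is a short exact sequence, $p$ is a deflation, so the pullback $E=X\times_Y B$ exists, $a\colon E\to X$ is again a deflation, and its kernel is canonically $\ker p=A$; this gives a short exact sequence $0\to A\xrightarrow{\tilde{\imath}}E\xrightarrow{a}X\to 0$ with $h\tilde{\imath}=i$, and $fa=ph$. Then $h\in J^\square$ because right lifting classes are stable under base change: a lifting problem for some $j\in J$ against $h$ becomes, after composing with $a$ and $p$, a lifting problem against $f\in J^\square$; solving the latter and recombining via the universal property of $E$ produces the required lift for $h$.

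\emph{$Gh$ is surjective.} For $M\in\M$ and $b\colon M\to B$, I would apply Lemma \ref{lem:relev} to the trivial fibration $f$ and the map $pb\colon M\to Y$ to obtain $x\colon M\to X$ with $fx=pb$; then $(x,b)\colon M\to E$ lies over $b$, so $\overline{\E}(M,h)$ is surjective.

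\emph{$Gh$ is injective — the crux.} Given $g\colon M\to E$ with $hg$ factoring through an injective, say $hg=\iota\alpha$, I would first observe that $f(ag)=p(hg)$ also factors through an injective, hence $ag$ factors through an injective too, say $ag=\delta\zeta$, using that $Gf$ is injective (as $f\in\W$). Using enough injectives, I would pick an inflation $\iota_N\colon M\hookrightarrow N$ into an injective $N$ with cokernel $\nu\colon N\to Z$ (so $Z\in\mho\M$ and $0\to Z$ lies in $J$), and extend $\alpha,\zeta$ along $\iota_N$ to $\tilde\alpha,\tilde\zeta$, yielding $u:=\iota\tilde\alpha\colon N\to B$ and $v:=\delta\tilde\zeta\colon N\to X$ with $u\iota_N=hg$, $v\iota_N=ag$. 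The comparison map $(u,v)$ need not land in the pullback $E$; but the obstruction $pu-fv\colon N\to Y$ vanishes on $M$, hence factors through $\nu$ as $\mu\nu$, and the fibration property of $f$ lifts $\mu$ to $\tilde\mu\colon Z\to X$ with $f\tilde\mu=\mu$. Now $f(v+\tilde\mu\nu)=pu$, so there is $\Theta\colon N\to E$ with $a\Theta=v+\tilde\mu\nu$ and $h\Theta=u$; since $\nu\iota_N=0$, the maps $\Theta\iota_N$ and $g$ agree after applying both $a$ and $h$, hence $\Theta\iota_N=g$ by the universal property of the pullback. Thus $g$ factors through the injective $N$, i.e.\ $\overline g=0$, and $\overline{\E}(M,h)$ is injective.

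\emph{Conclusion and main difficulty.} Together these give that $Gh$ is an isomorphism of $\overline{\M}$‑modules, i.e.\ $h\in\W$; with $h\in J^\square$ this yields $h\in J^\square\cap\W$. I expect the injectivity step to be the only genuine obstacle: the point is precisely to repair the naive comparison map $(u,v)$ by the correction term $\tilde\mu\nu$ extracted from the fibration property of $f$, the weak equivalence property of $f$ being what makes $ag$ vanish modulo injectives so that this correction exists. (The hypothesis that $Gi$ is a monomorphism is what matches this lemma to axiom (iii) of the theorem; it can also be fed into an alternative four‑lemma chase along the morphism of short exact sequences $0\to A\to E\to X\to 0$, $0\to A\to B\to Y\to 0$ displayed above.)
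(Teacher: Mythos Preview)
Your proof is correct, and the fibration and surjectivity parts coincide with the paper's. The injectivity argument, however, is genuinely different. The paper argues entirely inside $\mathrm{Mod}\,\overline{\M}$: it notes (using that $f$ and $h$ share a kernel) that there is a short exact sequence $0\to A\to E\to X\to 0$, applies left-exactness of $G$ to it, and then runs a four-lemma style chase, using $Gf$ mono to push a test element into $GA$ and then the hypothesis $Gi$ mono to kill it. Your argument instead stays in $\E$: you upgrade the two injective factorisations of $hg$ and $ag$ to maps out of a single injective $N$, measure the failure of the resulting pair to land in the pullback, kill that obstruction via the $J^\square$-lifting property of $f$ against the cokernel $Z\in\mho\M$, and conclude by uniqueness in the pullback.

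What this buys you is notable: your injectivity step never invokes the hypothesis that $Gi$ is a monomorphism, nor does it need $h$ (or $f$) to be a deflation or $G$ to be left exact. So you have in fact proved the unrestricted statement that pullbacks of trivial fibrations along deflations are trivial fibrations, which is stronger than what the paper establishes and would remove the extra clause ``$Gi$ monomorphism'' from axiom~(iii) of the prefibration theorem. The paper's approach, by contrast, is shorter once one accepts the module-category machinery, and makes the role of the hypothesis $Gi$ mono transparent; but it also leans on $h$ being a deflation (to identify $\ker h$ with $\ker f$), which in the prefibration theorem's generality (where $\M$ need not contain the projectives) is not obviously available at that stage of the argument.
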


\begin{proof}
First, we have, without using the fact that $Gi$ is a monomorphism, that $h \in J^\square$. 

Second, let us show that $Gh$ is a monomorphism. As $h$ is a deflation (since it belongs to $J^\square$, and from lemma \ref{lem:caracfibex}), the morphisms $f$ and $h$ have the same kernel. Then we have the following diagram:

\[\xymatrix{& K \ar@{=}[r]\ar[d] & K \ar[d] \\ A \ar@{=}[d] \ar^k[r] & E \ar_h[d] \ar^a[r] & X \ar^f[d] \\ A \ar_i[r] & B \ar_p[r] & Y}\]

We have a short exact sequence $0 \to A \to E \to X \to 0$. Let us show that $Gh$ is a monomorphism. Let $\beta:D \to GE$ be such that $Gh \circ \beta=0$. We show that $\beta=0$. We have \[ Gp \circ Gh \circ \beta=0.\] Then \[G(p \circ h) \circ \beta=0\] and \[ G(f \circ a) \circ \beta=0.\] Thus, we have \[ Gf \circ Ga \circ \beta=0.\] As $Gf$ is a monomorphism since $f \in \W$, we have \[Ga \circ \beta=0.\] The fact that $G$ is left exact shows that there exists $c:D \to GA$ such that \begin{equation}
\label{eq:c}\beta=Gk \circ c.\end{equation} Moreover, by hypothesis \[Gi \circ c = Gh \circ \beta=0.\] As $Gi$ is a monomorphism, $c=0$ and from (\ref{eq:c}) we have $\beta=0$. This shows that $Gh$ is a monomorphism.

Now we show that $Gh$ is an epimorphism. Let $b:M \to B$ be a morphism, where $M \in \M$. Then we have $p \circ b:M \to Y$. From Lemma \ref{lem:relev}, there exists $\alpha:M \to X$ such that \[ f \circ \alpha=p \circ h.\] Then, from the pullback property, there exists $\varphi:M \to E$ such that $h \circ \varphi=b$ and this shows that $Gh$ is an epimorphism.
\end{proof}

\begin{proof}[Proof of the theorem]
\begin{enumerate}[label=(\roman*)]
\item The first item is well-known.
%
%
%
%
%
\item This follows from the fact that fibrations are defined by a lifting property.
\item Fibrations are deflations (this is because $\E$ is weakly idempotent complete, see lemma \ref{lem:caracfibex}), then pullbacks exist along fibrations. The rest of the item is the previous lemma.
\item We have the factorization from lemma \ref{cor:fact}. It means that any morphism can be factorised through a morphism of $^\square(J^\square)$ followed by a morphism of $J^\square$. Indeed, it only uses the fact that there exist some $\mho \M$-approximations, which we suppose in the hypotheses of this theorem. Then for any $X$, the diagonal $X \to X \oplus X$ can be factorised $X \to X' \to X \oplus X$, where the first morphism is a weak equivalence (we have seen that the morphisms of $^\square(J^\square)$ are weak equivalences from lemma $\ref{lem:jperpperp}$).
\item By definition of $J$, any object is fibrant, then $0 \to B$ is a fibration for any object $B$ of $\E$.
\end{enumerate}
\end{proof}

\subsection{Theorem of Quillen}

In this section, we show that the theorem of Quillen is satisfied. We recall that $\W$ is the class of morphisms whose images under $G$ are isomorphisms.

\begin{theo}
Let $\E$ be a weakly idempotent complete exact category with enough injective and projective objects. Assume that $\M \subseteq \E$ is a rigid, contravariantly finite subcategory of $\E$ containing all the injective and projective objects, and stable under taking direct sums and summands. Suppose moreover that $\mho \M$ is contravariantly finite.

Let $\mathrm{Ho}~\E$ be the localization of $\E$ at the class $\W$ of weak equivalences. Let $\mathrm{mod} ~\overline{\M}$ be the category of finitely presented $\overline{\M}$ modules. There is an equivalence of categories \[\mathrm{Ho}~\E \simeq \mathrm{mod}~ \overline{\M}. \]
\end{theo}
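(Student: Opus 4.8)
The plan is to combine the prefibration structure established in the previous section with Quillen's description of the homotopy category (Theorem \ref{th:ho}, together with Remark \ref{rmk:ho}, which allows us to use only the second factorization on cofibrant objects) and the equivalence of Demonet and Liu. By Theorem \ref{th:ho} and the remark, the localization $\mathrm{Ho}~\E$ is equivalent to the full subcategory of fibrant and cofibrant objects modulo the homotopy relation. By Lemma \ref{lem:fib} every object is fibrant, and by Lemma \ref{lem:cof} (using that $\M$ contains the projectives, which is guaranteed since $\E$ has enough projectives and we may assume $\M$ contains them, or else via the cofibrant replacements provided by Lemma \ref{lem:prmapp} and Lemma \ref{lem:prmfib}) the cofibrant objects are exactly $\mathrm{pr}\M$. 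Hence $\mathrm{Ho}~\E \simeq \mathrm{pr}\M/\!\sim$.

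The next step is to identify the homotopy relation on $\mathrm{pr}\M$ with the ideal of morphisms factoring through $\mho\M$. By Lemma \ref{lem:htp}, two morphisms $f,g$ between objects of $\E$ are homotopic if and only if $f-g$ factors through $\overline{\M}^\perp$. For morphisms with domain and codomain in $\mathrm{pr}\M$, I would argue that such a factorization can be taken through an object of $\mho\M$: given $X \xrightarrow{\alpha} A \xrightarrow{\beta} Y$ with $X,Y \in \mathrm{pr}\M$ and $A \in \overline{\M}^\perp$, one replaces $A$ by an object of $\mho\M$ using Lemma \ref{lem:mho}, which gives $\mathrm{pr}\M \cap \overline{\M}^\perp \simeq \mho\M$; concretely one pushes the factorization through $\mho M$ by taking an $\M$-approximation of $X$ (or using that $\mho\M$ is a right $\overline{\M}^\perp$-"core" inside $\mathrm{pr}\M$). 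Conversely any morphism factoring through $\mho\M \subseteq \overline{\M}^\perp$ is a homotopy. Therefore the homotopy relation on $\mathrm{pr}\M$ is precisely the ideal $(\mho\M)$, and $\mathrm{pr}\M/\!\sim \;=\; \mathrm{pr}\M/(\mho\M)$.

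Finally, by the theorem of Demonet and Liu recalled above, the functor $G$ induces an equivalence $\mathrm{pr}\M/\mho\M \simeq \mathrm{mod}~\overline{\M}$. Composing the three identifications gives
\[
\mathrm{Ho}~\E \;\simeq\; \mathrm{pr}\M/\!\sim \;\simeq\; \mathrm{pr}\M/\mho\M \;\simeq\; \mathrm{mod}~\overline{\M},
\]
and one checks that under the localization functor $\E \to \mathrm{Ho}~\E$ this composite equivalence is induced by $G$, since $G$ sends weak equivalences to isomorphisms by definition of $\W$ and hence factors through $\mathrm{Ho}~\E$.

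I expect the main obstacle to be the careful verification that the abstract homotopy relation coming from the model/prefibration structure (Lemma \ref{lem:htp}, phrased via $\overline{\M}^\perp$) agrees with the ideal $(\mho\M)$ when restricted to $\mathrm{pr}\M$, i.e.\ the step invoking Lemma \ref{lem:mho}: one must ensure that a homotopy between maps of $\mathrm{pr}\M$ can always be normalized to pass through $\mho\M$ rather than an arbitrary object of $\overline{\M}^\perp$, and that this normalization is compatible with composition so that the quotient categories genuinely coincide. A secondary point requiring care is the applicability of Theorem \ref{th:ho} in the prefibration (rather than full model) setting, which is precisely what Remark \ref{rmk:ho} is there to license, so this should be invoked explicitly.
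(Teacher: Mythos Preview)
Your three-step chain $\mathrm{Ho}~\E \simeq \mathrm{pr}\M/\!\sim \simeq \mathrm{pr}\M/\mho\M \simeq \mathrm{mod}~\overline{\M}$ is exactly the shape of the paper's argument, and your Step~2 and Step~3 are essentially correct (for Step~2 the clean route is: $f\sim g \Rightarrow f-g$ factors through $\overline{\M}^\perp \Rightarrow G(f-g)=0 \Rightarrow f-g\in(\mho\M)$ by faithfulness of the Demonet--Liu equivalence on $\mathrm{pr}\M$, rather than trying to use Lemma~\ref{lem:mho} directly on the intermediate object).

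The genuine gap is in Step~1. You invoke Theorem~\ref{th:ho} via Remark~\ref{rmk:ho}, but that remark only says Hovey's proof goes through once one has the \emph{second} factorisation (cofibration followed by trivial fibration) on cofibrant domains. In the present exact (non-Frobenius) setting no class of cofibrations is defined and no such factorisation is established anywhere in Part~I; Lemma~\ref{lem:fac2} is a Part~II (Frobenius) result. Moreover the prefibration structure obtained here is only ``almost'' an ABC prefibration category: item~(iii) holds only under the extra hypothesis that $Gi$ be a monomorphism. So you cannot black-box Quillen's theorem, nor an off-the-shelf prefibration analogue.

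The paper's fix is to reprove the Hom-set identification $\mathrm{Ho}~\E(A,B)=\E(A,B)/\!\sim$ for cofibrant $A,B$ by hand. First one shows that a weak equivalence between cofibrant objects is a homotopy equivalence (this uses the first factorisation and the lifting property of cofibrant objects against trivial fibrations, both available in Part~I). For fullness/surjectivity the paper does \emph{not} use a model-categorical factorisation at all: it passes to the quotient $\E/\{f:Gf=0\}$ and applies Radulescu--Banu's Theorem~6.4.2 to get a right calculus of fractions, so that every morphism in $\mathrm{Ho}~\E(A,B)$ is of the form $f\circ s^{-1}$ with $s\in\W$; one then uses a cofibrant replacement and the first factorisation to rectify this to an honest morphism $A\to B$. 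This is the substantive extra input you are missing.
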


\begin{rmk}
In order to prove this theorem, we need the following lemmas. The proofs of these lemmas are well-known, but we give here some details in order to show that the restriction of (iii) in Theorem \ref{th:prefib} does not affect the results. Then, some parts of the proofs simplify due to the particular shape of the relation of homotopy.
\end{rmk}

\begin{lem}
Let $A,B,X$ be three cofibrant objects. Let $F$ be the functor $\E(-,X)/\sim$ where $\sim$ is the right homotopy relation. Then $F$ sends weak equivalences in $\E$ to isomorphisms.
\end{lem}

\begin{proof}
This functor is well-defined because the relation $\sim$ behaves well with the right composition. Let now $f:A \to B$. We factor $f$ through a morphism $g:A \to C \in ^\square(J^\square)$ followed by a morphism $p:C \to B \in J^\square$. As $g \in \W$ and $f \in \W$, then by the two-out-of-three property, $p$ is also a weak equivalence. In addition, $B$ is cofibrant, then there exists $w:B \to C$ such that $p \circ w=1$.

Then, we have \[1=F(1)=F(p \circ w)=F(p) \circ F(w).\] Then $F(p) \in \W$. This shows by the two-out-of-three property, that $F(f) \in \W$, and $Ff$ is surjective.

Let us now show that it is injective. If $\alpha:B \to X$ is such that $F\alpha=0$, then $\alpha \circ f \sim 0$. As $A$ is cofibrant, we have $\alpha \circ f \in (\mho \M)$ (from Lemma \ref{lem:htp}). This shows that $G(\alpha \circ f)=0$ since $\M$ is rigid. But $f \in \W$, then $Gf$ is an isomorphism, then $G\alpha=0$. As $B$ is cofibrant, $\alpha \in (\mho \M)$. Then $\alpha \sim 0$. Then $Ff$ is injective and this shows the lemma.
\end{proof}

Now we can show the following result:

\begin{lem}\label{lem:iso}
If $A$ and $B$ are two cofibrant objects, then \[\mathrm{Ho}~\E(A,B) \simeq \E(A,B)/\sim.\]
\end{lem}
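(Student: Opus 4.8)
The plan is to realize $\mathrm{Ho}~\E(A,B)$ as a localization computed "on the nose" in the full subcategory of cofibrant objects, using the fact that every object of $\E$ admits a cofibrant replacement that is a weak equivalence (Lemmas \ref{lem:prmapp} and \ref{lem:prmfib}), and then to show that among cofibrant objects every weak equivalence becomes invertible after passing to the quotient by homotopy. The first step is to record that the canonical functor $\E \to \mathrm{Ho}~\E$ sends weak equivalences to isomorphisms, and that, since for any $X \in \E$ there is a cofibrant $A_X \in \mathrm{pr}\M$ and a weak equivalence $A_X \to X$, the full subcategory $\E_{cf}$ of cofibrant objects has the property that $\E_{cf} \to \mathrm{Ho}~\E$ is essentially surjective; so it suffices to compute hom-sets between cofibrant objects.

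Next I would use the previous lemma (the one asserting that $Ff:\E(B,X)/\sim \to \E(A,X)/\sim$ is an isomorphism when $f$ is a weak equivalence between cofibrant objects) as the key input. The point is that $\E_{cf}/\sim$ together with the class of weak equivalences between cofibrant objects admits a calculus of (left) fractions, and that previous lemma is exactly the statement that weak equivalences act invertibly on the represented functors $\E(-,X)/\sim$. Concretely, given a morphism $h:A \to B$ in $\mathrm{Ho}~\E$, lift it to a zig-zag in $\E$; using cofibrant replacements one may assume all objects in the zig-zag are cofibrant and all backward maps are weak equivalences between cofibrant objects; then the previous lemma lets one contract each backward weak equivalence $s:A' \to A''$ to an honest morphism modulo $\sim$ (apply $Ff$ with $X=B$ and invert). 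This gives a well-defined map $\E(A,B)/\sim~\to~\mathrm{Ho}~\E(A,B)$ inverse to the natural map induced by $\E \to \mathrm{Ho}~\E$; one checks it is well-defined on $\sim$-classes because homotopic maps become equal in $\mathrm{Ho}~\E$ (a homotopy is a map into a path object $Y\oplus V$ with $V \in \overline{\M}^{\perp}$, and the structure map $Y \to Y\oplus V$ is a weak equivalence, so the two composites with the two projections agree in $\mathrm{Ho}~\E$; this uses Lemma \ref{lem:htp}).

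Then I would verify the two composites are mutually inverse: the natural map $\E(A,B)/\sim~\to~\mathrm{Ho}~\E(A,B)$ is surjective by the fraction-contraction argument above, and injective because if two maps $f,g:A\to B$ become equal in $\mathrm{Ho}~\E$ then, again by a calculus-of-fractions argument using cofibrant replacements and the previous lemma (equalizing them after precomposition with a weak equivalence from a cofibrant object, which by that lemma can be stripped off), they are already homotopic. The main obstacle I expect is the careful bookkeeping in this last injectivity step: one must produce, from an equality in the localization, a single cofibrant object $A'''$ with a weak equivalence $A''' \to A$ coequalizing $f$ and $g$, and then invoke the previous lemma (with $X=B$) to conclude $f\sim g$; this is where the hypothesis that $\M$ contains the projectives (hence cofibrant replacements exist and are weak equivalences) and the precise shape of the homotopy relation are both essential, and where one has to be sure the restriction of the second factorization to cofibrant domains (Remark \ref{rmk:ho}) is all that is used.
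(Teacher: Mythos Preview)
Your proposal is correct in outline and reaches the same conclusion, but the paper organizes the argument rather differently and leans on an external result where you do things by hand.

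The paper splits the proof into two explicit steps. Step~1 shows, using the previous lemma with $X=A$ and then $X=B$, that a weak equivalence between cofibrant objects is a homotopy equivalence (and conversely via $G$); this gives injectivity of $\E(A,B)/\!\sim\,\to \mathrm{Ho}~\E(A,B)$. Step~2 handles surjectivity not by contracting zig-zags directly, but by verifying the hypotheses of Radulescu-Banu's Theorem~6.4.2 for the category pair $(\E/\{f: Gf=0\},\,\underline{\W})$: two-out-of-three, existence of weak pullbacks along weak equivalences (built from the factorization of Lemma~\ref{cor:fact} together with a $\mathrm{pr}\M$-approximation and a lift), and the coequalizing condition. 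That theorem then yields the right-fraction description $f\circ s^{-1}$, and a final lift through the factorization (using that $A$ is cofibrant) produces an actual map $a:A\to B$ representing the given class.

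Your route instead argues that in $\E_{cf}/\!\sim$ weak equivalences are already isomorphisms (this is Step~1 in disguise), so the universal property of localization gives $\E_{cf}/\!\sim \simeq \mathrm{Ho}~\E_{cf}$, and cofibrant replacement gives $\mathrm{Ho}~\E_{cf}\simeq \mathrm{Ho}~\E$. This is the standard Quillen-style reduction and is conceptually cleaner, but the second equivalence (that the inclusion $\E_{cf}\hookrightarrow\E$ induces an equivalence of homotopy categories) is exactly where the paper invokes Radulescu-Banu rather than performing your zig-zag contraction. Your approach works, but you should be aware that the ``contracting each backward weak equivalence'' step amounts to checking an Ore-type condition, and the paper outsources precisely this verification.
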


\begin{proof}
Let $A$ and $B$ be two cofibrant objects.

\bigskip

Step 1: We show that, $f:A \to B$ is a weak equivalence if and only if it is a homotopy equivalence.

Suppose $f$ is a weak equivalence. We use the previous lemma with $X=A$. From the surjectivity of $Ff$, there exists $g:B \to A$ such that $g \circ f \sim 1$. Then, $f \circ g \circ f \sim f$. Now we apply the result to $X=B$ and have that \[Ff(f g)=Ff(1).\] However, $Ff$ is injective, then $f \circ g \sim 1$ and $f$ is a homotopy equivalence.

Suppose now that $f$ is a homotopy equivalence. Let $f'$ be a homotopy inverse for $f$. We have $f \circ f' \sim 1$ and $f' \circ f \sim 1$. Then $f \circ f'-1 \in (\mho \M)$ and $f' \circ f-1 \in (\mho \M)$. Then $G(f \circ f')=1$ and $G(f' \circ f)=1$. Then $Gf$ is an isomorphism and $f \in \W$.

This shows that $f=g$ in $\mathrm{Ho}\E(A,B)$ if and only if $f \sim g$.

\bigskip

Step 2: We now show the surjectivity of $\E(A,B) \to \mathrm{Ho}(A,B)$. We are going to use the book of Radulescu (see \cite[Theorem 6.4.2]{R}). Let us check the hypothesis with the pair of categories $(\E/\{f,Gf=0\},\underline{\W})$, where $\underline{\W}$ is the image of $\W$ in the quotient of $\E$ by $\{f, Gf=0\}$.

\begin{itemize}
\item The two out of three property is automatically checked.
\item If we have a pair of morphisms \[\xymatrix{& A' \ar_a[d] \\ B \ar_p[r] & A}\] such that $a \in \W$, then there exists $B',h:B' \to B$ and $k:B' \to A'$ such that the following square commutes:

\[\xymatrix{B' \ar^k[r] \ar_h[d] & A' \ar_a[d] \\ B \ar_p[r] & A}\]

Indeed, we introduce the factorization of $A' \to A$ by $b \in ^\square(J^\square)$ followed by $c \in J^\square$. As $a,b \in \underline{W}$, we also have $c \in \underline{W}$. Then $c$ is an acyclic fibration. Let $B'$ be a $\mathrm{pr}\M$-approximation of $B$. We lift the morphism $p \circ h$ to $c$, let us say $\begin{pmatrix}h_1 \\ h_2 \end{pmatrix}:B' \to A' \oplus \mho M$ (which is the shape of the factor in the factorization we have). Then $h_1$ is a lift from $B'$ to $A'$. We can check that the square commutes and $h \in \underline{W}$.
\item Suppose that we have \[\xymatrix{A \ar@<1ex>^f[r] \ar@<-1ex>_g[r] & B \ar^t[r] & B'}\] with $t \in \underline{W}$ and $t \circ f = t \circ g$. Then $Gf=Gg$, so, if $t':A' \to A \in \underline{W}$, we have $G(f \circ t')=G(g \circ t')$, then $f \circ t' - g \circ t' \in \overline{\M}^\perp$. Then $f \circ t'=g \circ t'$ in $\E/\{f,Gf=0\}$.
\end{itemize}
We can now apply Theorem 6.4.2 in \cite{R} and any morphism in $\mathrm{Ho}(\E(A,B))$ can be written as $f \circ s^{-1}$ with $s \in \underline{\W}$.
As $A$ and $B$ are cofibrant, we factor through $A' \to A$ in this way:
\[\xymatrix{& A' \ar^f[ddr] \ar_s[ddl] \ar[d] & \\ & A' \oplus \mho M \ar^{\alpha \in J^\square}[dl] \ar[dr] & \\ A \ar_a[rr] & & B}\] As $s \in \underline{\W}$, then $\alpha \in J^\square \cap \W$. Then we can lift $A' \oplus \mho M$ to $B$ by $(f~0)$.

From the theorem of Radulescu, if $\alpha \in \mathrm{Ho}(A,B)$, then there exists $s \in \W$ and $f \in \E(A,B)$ such that \[\alpha = \overline{f} \circ \overline{s}^{-1}.\] Then $\alpha=\overline{a}$ (since $\overline{a} \circ \overline{s}=\overline{f}$).

Then we have shown the surjectivity and then the lemma.
\end{proof}

\begin{proof}[Proof of the theorem of Quillen]
First of all, the functor is well-defined, since \[0 \to \mho M \in \W\] for any $M \in \M$ (because $\mathrm{Ext}^1(-,M)|\M=0$ implies $\overline{\E}(-,\mho M)|\M=0$).

Then, the functor is essentially surjective, since there exist some $\mathrm{pr}\M$-approximations, which are weak equivalences.

Next, from lemma \ref{lem:iso}, we have that $f=g$ in $\mathrm{Ho}\E(A,B)$ if and only if $f \sim g$ which immediately shows that the functor is faithful.

Finally, the functor is full, from the surjectivity of lemma \ref{lem:iso}.

The theorem of Demonet and Liu in \cite{DL} finishes to show the result.
\end{proof}

\subsection{The example of $A_7$}

In this subsection, we draw an example of the Theorem of case $A_7$. We are going to define a subcategory $\M$ which satisfies the hypotheses, and find $\mathrm{pr} \M$, $\mho\M$, the localization of $\E$, and remark that they are composed of the same objects.

Let $k$ be an algebraically closed field. Let $Q$ be a quiver of type $A_7$:
\[\xymatrix{1 & 2 \ar[l] & 3 \ar[l] & 4 \ar[l] & 5 \ar[l] & 6 \ar[l] & 7 \ar[l]}\]

Let $I$ be the ideal $\mathrm{rad}^2(kQ)$. Let $\E=\mathrm{mod}~{kQ/I}$ be the exact category we work with.

Let us draw the Auslander-Reiten quiver of $Q$:

\[ \scalebox{0.5}{\xymatrix{
	& & & P_3=I_2 \ar[dr] & & & & P_5=I_4 \ar[dr] & & & & P_7=I_6 \ar[dr] & \\
	P_1=S_1 \ar[dr] & & S_2 \ar[ur] & & S_3 \ar[dr] & & S_4 \ar[ur] & & S_5 \ar[dr] & & S_6 \ar[ur] & & I_7=S_7 \\
	& P_2=I_1 \ar[ur] & & & & P_4=I_3 \ar[ur] & & & & P_6=I_5 \ar[ur] & & & \\
}}\]
Let $\M$ be the subcategory additively generated by all the projective and injective modules, plus $S_3$.

It means $\M$ is generated by the set $\{P_i,I_i,S_3, i \in \{1,\cdots,7\}\}$.

First, we are going to identify $\mathrm{pr} \M$, then $\mho\M$. Second, we identify the localization of $\E$ with respect to $\W$ and remark that it is the same as the quotient of $\mathrm{pr}\M$ by $\mho \M$.

\begin{itemize}
\item Note that all objects in $\M$ belong to $\mathrm{pr}\M$. Moreover, $S_2 \in \mathrm{pr}\M$.
	
Indeed, there is a short exact sequence $0 \to P_1 \to P_2 \to S_2 \to 0$. In the same way, we can show that $S_4 \in \mathrm{pr}\M$. Indeed, there exist a short exact sequence $0 \to S_3 \to P_4 \to S_4 \to 0$.
	
In addition, $S_5$ and $S_6$ do not belong to $\mathrm{pr}\M$.

Then, $\mathrm{pr}\M$ is generated by the objects $\{M \in \M, S_2,S_4\}$.
\item Let us now identify $\mho\M$. It is obvious that all the injective objects belong to $\mho\M$. Moreover, $S_4 \in \mho\M$.
	
Indeed, there is a short exact sequence $0 \to S_3 \to P_4=I_3 \to S_4 \to 0$.
	
In the same way, we can show that $S_2 \in \mho\M$. Indeed, there exist a short exact sequence $0 \to P_1 \to P_2=I_1 \to S_2 \to 0$.

Then, $\mho\M$ is generated by the injective objects plus $S_2$,$S_4$.

If we take the quotient of $\mathrm{pr} \M$ by $\mho\M$, we obtain a category of type $A_1 \times A_1$, generated by $P_1 \oplus S_3$. By Demonet and Liu, the quotient of $\mathrm{pr} \M$ by $\mho\M$ is equivalent to $\mathrm{mod}~\overline{\M}$. Then, $\mathrm{mod}~\overline{\M}$ is generated by $P_1 \oplus S_1$.

\item Finally, let us find the localization of $\E$ with respect to $\W$. For this, we have to inverse the quasi-equivalences. We recall that a morphism $f$ lies in $\W$ if and only if $\overline{\E}(-,f)|\overline{\M}$ is an isomorphism.

Is is easy to check that the morphism $f:P_5 \to S_5$ is a weak equivalence.

Indeed, let $g:M \to S_5$ be a nonzero morphism (where $M \in \M$ is indecomposable). Then, $M= P_5$ because, $f$ is the only nonzero morphism from an object of $\M$ to $S_5$. Then $\overline{\E}(-,f)|\overline{\M}$ is surjective.

Moreover, if there is a morphism $M \to P_5 \to S_5$ factorizing through an injective object, as $P_5$ is also injective, the injectivivty of $\overline{\E}(-,f)|\overline{\M}$ is automatic.

Then $f$ is a weak equivalence. In the localization, $f$ becomes an isomorphism from $P_5$ to $S_5$. As $P_5$ is also injective and, in the stable category, the injective objects become isomorphic to $0$, $S_5$ becomes isomorphic to $0$.

In a similar way, we can show that $S_6$ becomes isomorphic to $0$ in the localization. We also note that $S_2$ and $S_4$ become isomorphic to $0$ as they belong to $\mho \M$.

Finally, we show that $P_1$ and $S_3$ are not zero in the localization.

Indeed, let $g:P_3 \to S_3$ and $h:S_3 \to P_4$. We show that $\overline{\E}(-,h)|\overline{\M}$ is not injective. We have
\[\begin{array}{ccccc}
 & \overline{\E}(S_3,S_3)|\overline{\M} & \to & \overline{\E}(S_3,P_4)|\overline{\M} \\
 & \mathrm{Id}_{S_3} & \mapsto & h \\
\end{array}\]
The codomain turns to be isomorphic to $0$, and $h$ turns to $0$. However, $\mathrm{Id}_{S_3}$ is not $0$. Then the application is not injective.

In a similar way, we can show that $\overline{\E}(-,g)|\overline{\M}$ is not injective.

We show the same result for $P_1$ and the localization of $\E$ with respect to $\W$ is also generated by $P_1 \oplus S_3$.

\end{itemize}

\newpage

\section{The particular case of Frobenius categories}
We recall the notations:

Let $\mathcal{E}$ be a Frobenius category. Let $\M$ be a strictly full rigid subcategory which contains the injective objects, stable under taking sums and summands, and contravariantly finite. Let
\[ \mathrm{pr} \M = \{ X \in \E, \exists M_1,M_0 \in \M, 0 \to M_1 \to M_0 \to X \to 0 \} \] and
\[ \mho\M = \{ X \in \E, \exists M \in \M, I \in \mathrm{Inj}, 0 \to M \to I \to X \to 0 \}. \]

Let \[\begin{array}{ccccc}
G & : & \E & \to & \mathrm{Mod} \overline{\M} \\
 & & X & \mapsto & \overline{\E}(-,X)/\overline{\M} \\
\end{array}\] which induces the following equivalence of categories \[ \mathrm{pr}\M/\mho \M \simeq \mathrm{mod}~\overline{\M}. \]

For more details, see the article of Demonet and Liu, \cite{DL}.

As the injective objects are also projective, note that $\M$ contains the projective objects.

\subsection{A deeper study of $\mathrm{pr}\M$}

In this section, we show that the subcategory $\mathrm{pr}\M$ is better behaved when $\E$ is Frobenius.

\begin{lem}\label{lem:approx}
Let $X \in \E$. If there is an $\M$-approximation of $\Omega X$, then there exists a $\mho \M$-approximation of $X$. In particular, if $\M$ is contravariantly finite, then so is $\mho \M$.
\end{lem}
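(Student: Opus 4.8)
The plan is to manufacture a right $\mho\M$-approximation of $X$ out of a right $\M$-approximation of $\Omega X$, essentially by "applying $\mho$" to it; the ambiguity of $\mho$ (which is only defined up to injective summands in $\E$) will be repaired by enlarging the approximation with an injective object sitting above $X$.

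Concretely, I would fix the short exact sequence $0 \to \Omega X \xrightarrow{u} I_X \xrightarrow{v} X \to 0$ with $I_X$ injective — hence also projective, since $\E$ is Frobenius, so that $v$ is a deflation out of a projective — and take a right $\M$-approximation $\alpha \colon M' \to \Omega X$, together with a short exact sequence $0 \to M' \xrightarrow{i} I_{M'} \xrightarrow{p} \mho M' \to 0$ with $I_{M'}$ injective. Since $I_X$ is injective and $i$ is an inflation, $u\alpha$ extends along $i$ to some $\gamma \colon I_{M'} \to I_X$, which induces a morphism $\bar\gamma \colon \mho M' \to X$ on cokernels (so that $\bar\gamma p = v\gamma$). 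The claim is then that
\[
g \;=\; \begin{pmatrix}\bar\gamma & v\end{pmatrix} \colon \mho M' \oplus I_X \longrightarrow X
\]
is a right $\mho\M$-approximation of $X$. Its source does lie in $\mho\M$, since $0 \to M' \xrightarrow{\binom{i}{0}} I_{M'} \oplus I_X \to \mho M' \oplus I_X \to 0$ is a short exact sequence with $M' \in \M$ and $I_{M'}\oplus I_X$ injective.

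To check the lifting property I would take an arbitrary $C \in \mho\M$, presented by a short exact sequence $0 \to M \xrightarrow{j} I \xrightarrow{q} C \to 0$ with $M \in \M$ and $I$ injective (hence projective), and an arbitrary $h \colon C \to X$, and then run the following chase. Lift $hq$ along the deflation $v$ to $\tilde h \colon I \to I_X$, using projectivity of $I$; since $v\tilde h j = hqj = 0$, write $\tilde h j = u\mu$ with $\mu \colon M \to \Omega X$; as $\alpha$ is a right $\M$-approximation and $M \in \M$, write $\mu = \alpha\nu$; extend $i\nu$ along $j$ to $\rho \colon I \to I_{M'}$, using injectivity of $I_{M'}$, and pass to cokernels to get $\bar\rho \colon C \to \mho M'$ with $\bar\rho q = p\rho$. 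One then checks $(\gamma\rho - \tilde h)j = u\alpha\nu - u\mu = 0$, so $\gamma\rho - \tilde h = \sigma q$ for a unique $\sigma \colon C \to I_X$, and composing with the epimorphism $q$ gives $\bar\gamma\bar\rho q = v\gamma\rho = v\tilde h + v\sigma q = hq + v\sigma q$, whence $h = \bar\gamma\bar\rho - v\sigma = g\binom{\bar\rho}{-\sigma}$, the desired factorization. The last assertion is then immediate: if $\M$ is contravariantly finite, every syzygy $\Omega X$ has a right $\M$-approximation, hence every $X$ has a right $\mho\M$-approximation.

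The one genuinely delicate point is the correction term $v\sigma$: there is no reason for $\bar\gamma\bar\rho$ to equal $h$ on the nose — only modulo a morphism factoring through the injective $I_X$ — and it is precisely this that forces the approximation to be $\mho M' \oplus I_X$ rather than $\mho M'$, the extra summand $v$ being there to absorb $v\sigma$. Everything else is routine bookkeeping of extending along inflations into injectives and lifting along deflations out of projectives, all legitimate because $\E$ is Frobenius.
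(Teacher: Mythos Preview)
Your proof is correct and follows essentially the same route as the paper's: build a morphism $\mho M' \to X$ out of the $\M$-approximation of $\Omega X$, observe that it fails to be an approximation only by an error that factors through the injective-projective $I_X$, and therefore enlarge the source to $\mho M' \oplus I_X$; the verification chase (lift along $v$, factor through $u$, use the $\M$-approximation, extend into $I_{M'}$, and correct by a $\sigma$ through $I_X$) is step-for-step the same as the paper's, with your $(\bar\rho,-\sigma)$ playing the role of the paper's $(\theta,\kappa)$. The only cosmetic difference is that the paper first forms the pushout $Y$ of $M' \to I_{M'}$ along $M' \to \Omega X$ and routes the construction through $Y$, whereas you work directly with $\gamma\colon I_{M'}\to I_X$; this streamlines the bookkeeping but changes nothing of substance.
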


\begin{proof}
	
Let $a:M \to \Omega X$ be an $\M$-approximation of $\Omega X$. We can introduce $Y$ as the push-out of the pair of morphisms:
	
\[ \xymatrix{
	M \ar_{\iota_M}[d] \ar^a[r] & \ \Omega X \ar^c[d] \\
	I_M \ar_b[r] & Y} \]

in particular \begin{equation} \label{eq:mho0} c \circ a= b \circ \iota_M. \end{equation}
	
Let \[\xymatrix{0 \ar[r] & \Omega X \ar^{\iota_X}[r] & P_X \ar^{\pi_X}[r] & X \ar[r] & 0}\] be a short exact sequence expressing the fact that $\Omega X$ is a sizygy of $X$. We then have the following diagram:
	
\[ \scalebox{0.6}{\xymatrix{
	M \ar_{\iota_M}[dd] \ar^a[r] & \ \Omega X \ar^c[dd] \ar@{=}[dr] & \\
	& & \Omega X \ar^{\iota_X}[dd] \\
	I_M \ar_b[r] \ar_{\pi_M}[dd] & Y \ar^e[dd] \ar@{.>}^{\alpha}[dr] & \\
	& & P_X \ar^{\pi_X}[dd] \\
	\mho M \ar@{=}[r] & \mho M \ar@{.>}^{\beta}[dr] & \\
	& & X}} \]
	
Note that we have \begin{equation} \label{eq:mho01} e \circ b = \pi_M. \end{equation}
	
Since $\E$ is Frobenius, $P_X$ is injective, and there exists $\alpha: Y \to P_X$ such that \begin{equation}\label{eq:mho1} \iota_X=\alpha \circ c. \end{equation}
	
Then there exists $\beta$ completing the commutative square of the previous equation \ref{eq:mho1} to a morphism of short exact sequences: \begin{equation}\label{eq:mho2} \pi_X \circ \alpha = \beta \circ e \end{equation}
	
Then we have a short exact sequence \[0 \to Y \to P_X \oplus \mho M \to X \to 0.\] We now show that $(\pi_X~\beta)$ is a $\mho \M$-approximation (we note that, $\E$ being Frobenius, $\mho \M$ contains the projective modules).
	
Let $\gamma:\mho N \to X$ be a morphism. As $I_N$ is also projective, there exists $\delta:I_N \to P_X$ such that \begin{equation}\label{eq:mho3} \pi_X \circ \delta = \gamma \circ \pi_N \end{equation}
	
\[ \scalebox{0.8}{\xymatrix{
	M \ar_{\iota_M}[dd] \ar^a[r] & \ \Omega X \ar^c[dd] \ar@{=}[dr] & & \\
	& & \Omega X \ar^{\iota_X}[dd] & N \ar@{.>}_{\varepsilon}[l] \ar@{.>}@/^-3pc/_\zeta[ulll] \ar^{\iota_N}[dd] \\
	I_M \ar_b[r] \ar_{\pi_M}[dd] & Y \ar^e[dd] \ar^{\alpha}[dr] & & \\
	& & P_X \ar^{\pi_X}[dd] & I_N \ar@{.>}_{\delta}[l] \ar@{.>}@/^-3pc/_\eta[ulll] \ar^{\pi_N}[dd]\\
	\mho M \ar@{=}[r] & \mho M \ar^{\beta}[dr] & & \\
	& & X & \mho N \ar^{\gamma}[l] \ar@{.>}@/^-3pc/_\theta[ulll] \ar@{.>}_\kappa[uul]}} \]
	
We complete \ref{eq:mho3} into a morphism of short exact sequences by $\varepsilon$, meaning that \begin{equation} \label{eq:mho4} \iota_X \circ \varepsilon = \delta \circ \iota_N. \end{equation}
	
As $a:M \to \Omega X$ is an $\M$-approximation, there exists $\zeta:N \to M$ such that \begin{equation} \label{eq:mho5} a \circ \zeta = \varepsilon. \end{equation}
	
As $I_M$ is injective, and $\iota_N$ is an inflation, there exists $\eta: I_N \to I_M$ such that \begin{equation} \label{eq:mho6} \eta \circ \iota_N=\iota_M \circ \zeta. \end{equation}
Then there exists $\theta:\mho N \to \mho M$ such that: \begin{equation} \label{eq:mho7} \theta \circ \pi_N = \pi_M \circ \eta. \end{equation}
	
Then we have:
\begin{equation} \begin{aligned} \delta \circ \iota_N & = \iota_X \circ \varepsilon \text{ by (\ref{eq:mho4})} \\
& = \iota_X \circ a \circ \zeta \text{ by (\ref{eq:mho5})}\\
& = \alpha \circ c \circ a \circ \zeta \text{ by (\ref{eq:mho1})} \\
& = \alpha \circ b \circ \iota_M \circ \zeta \text{ by (\ref{eq:mho0}}) \\
& = \alpha \circ b \circ \eta \circ \iota_N \text{ by (\ref{eq:mho6}).}
\end{aligned} \end{equation}
	
Then, the morphism $\alpha \circ b \circ \eta - \delta$ factorizes through $\pi_N$.
	
Thus, there exists $\kappa: \mho N \to P_X$ such that \begin{equation} \label{eq:mho8} \kappa \circ \pi_N = \delta -\alpha \circ b \circ \eta. \end{equation}
	
Let us finally show that the morphism $\begin{pmatrix} \kappa \\ \theta \end{pmatrix}$ satisfies $(\pi_X~\beta) \begin{pmatrix} \kappa \\ \theta \end{pmatrix} = \gamma$.
	
As $\pi_N$ is a deflation, is suffices to show that \[\pi_X \circ \kappa \circ \pi_N + \beta \circ \theta \circ \pi_N= \gamma \circ \pi_N. \]
We have:
\begin{equation} \begin{aligned}
\pi_X \circ \kappa \circ \pi_N & =\pi_X(\delta-\alpha \circ b \circ \eta) \text{ by (\ref{eq:mho8})} \\
& = \gamma \circ \pi_N - \pi_X \circ \alpha \circ b \circ \eta \text{ by (\ref{eq:mho3})} \\
& = \gamma \circ \pi_N - \beta  \circ e \circ b \circ \eta \text{ by (\ref{eq:mho2})} \\
& = \gamma \circ \pi_N - \beta  \circ \pi_M \circ \eta \text{ by (\ref{eq:mho01})} \\
& = \gamma \circ \pi_N-\beta \circ \theta \circ \pi_N \text{ by (\ref{eq:mho7})}.
\end{aligned} \end{equation}
\end{proof}

\begin{defi}
We call by $\mathrm{copr}~\mho \M$ the class of objects $X \in \E$ for which there exist $M,M' \in \M$ and a short exact sequence $0 \to X \to \mho M \to \mho M' \to 0$.
\end{defi}

\begin{lem}\label{lem:copr}
We have the following equality: \[\mathrm{copr}~\mho \M = \mathrm{pr}\M.\]
\end{lem}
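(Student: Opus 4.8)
The plan is to prove the two inclusions $\mathrm{pr}\M\subseteq\mathrm{copr}\,\mho\M$ and $\mathrm{copr}\,\mho\M\subseteq\mathrm{pr}\M$ separately, each by a short diagram argument resting on the $3\times 3$-lemma of Demonet--Liu recalled above. Conceptually, the equality holds because in the stable category $\underline{\E}$ (where $\mho$ is the suspension) each condition says precisely that $X$ is the cone of a morphism between two objects of $\M$, the presentation $M_1\to M_0\to X$ and the co-presentation $X\to\mho M_1\to\mho M_0$ being rotations of one another; but I will carry everything out inside $\E$.

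For $\mathrm{pr}\M\subseteq\mathrm{copr}\,\mho\M$, I would pick a short exact sequence $0\to M_1\to M_0\to X\to 0$ with $M_0,M_1\in\M$ together with an injective embedding $0\to M_0\to I_0\to\mho M_0\to 0$, and apply the $3\times 3$-lemma to these two sequences, $M_0$ playing the role of the middle term of the first and the sub of the second. One then reads off, from the resulting commutative diagram of short exact sequences, a middle row $0\to M_1\to I_0\to U\to 0$ and a rightmost column $0\to X\to U\to\mho M_0\to 0$, where $U$ is the push-out of $X\leftarrow M_0\to I_0$. Since $M_1\in\M$ and $I_0$ is injective, the middle row exhibits $U$ as a cosyzygy $\mho M_1$ of $M_1$, and then the rightmost column exhibits $X$ as an object of $\mathrm{copr}\,\mho\M$. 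This half uses neither the Frobenius hypothesis nor rigidity.

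For $\mathrm{copr}\,\mho\M\subseteq\mathrm{pr}\M$, start from $0\to X\xrightarrow{u}\mho M\xrightarrow{p}\mho M'\to 0$ with $M,M'\in\M$, and let $0\to M\to I\xrightarrow{q}\mho M\to 0$ be the sequence defining $\mho M$. Pull back the deflation $q$ along the inflation $u$; the pulled-back deflation $E\to X$ has kernel $\ker q\cong M$, giving $0\to M\to E\to X\to 0$, while the other projection $E\to I$ is an inflation whose cokernel equals that of $u$, giving $0\to E\to I\xrightarrow{pq}\mho M'\to 0$. Comparing this last sequence with the defining sequence $0\to M'\to I'\to\mho M'\to 0$ via Schanuel's lemma yields $E\oplus I'\cong M'\oplus I$; here one uses that $\E$ is Frobenius, so the injective terms $I,I'$ are projective and Schanuel applies. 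Since $\M$ contains all the injective (hence all the projective) objects and is closed under finite direct sums and summands, $E\oplus I'\cong M'\oplus I\in\M$ forces $E\in\M$, and $0\to M\to E\to X\to 0$ with $M,E\in\M$ shows $X\in\mathrm{pr}\M$.

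The computations are essentially routine; the two points needing care are the identification, in the first inclusion, of the push-out $U$ as an honest cosyzygy of $M_1$ (this is exactly what the explicit $3\times 3$-diagram furnishes), and, in the second inclusion, the identification of the pull-back $E$ as an object of $\M$. This second step is the only genuine use of the Frobenius hypothesis --- through Schanuel's lemma, which requires the ``injective'' terms $I,I'$ to be projective --- and is the main obstacle.
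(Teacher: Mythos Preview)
Your proof is correct and follows essentially the same route as the paper. For the inclusion $\mathrm{pr}\M\subseteq\mathrm{copr}\,\mho\M$, both you and the paper use the same $3\times 3$-diagram obtained by pushing out $M_1\to M_0\to X$ along $M_0\to I_0$ (the paper refers back to the diagram in Lemma~\ref{lem:mho} for this). For the converse, the paper builds the same pull-back diagram you do, writing your object $E$ as $\Omega\mho M_0$, and then argues separately that $\Omega\mho M\in\M$ via the comparison of the two short exact sequences $M\to I\to\mho M$ and $\Omega\mho M\to P\to\mho M$; this is exactly your Schanuel step, just not named as such. Your observation that only the second inclusion needs the Frobenius hypothesis is also in line with the paper's organization, since the first inclusion is already established in Part~I.
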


\begin{proof}
Let $C \in \mathrm{copr}~\mho \M$. We have the diagram:
\[ \xymatrix{M_1 \ar[r] \ar@{=}[d] & \Omega \mho M_0 \ar[r] \ar[d] & C \ar[d] \\ M_1 \ar[r]  & I \ar[r] \ar[d]& \mho M_1 \ar[d] \\ & \mho M_0 \ar@{=}[r] & \mho M_0} \]
We have two morphisms of short exact sequence \[ \xymatrix{ M \ar[r] \ar@<2pt>[d] & I \ar[r] \ar@<2pt>[d] & \mho M \ar@{=}[d] \\ \Omega \mho M \ar@<2pt>[u] \ar[r] & P \ar@<2pt>[u] \ar[r] & \mho M}\]
showing that $\Omega \mho M \simeq M \oplus I \in \M$. The other inclusion is proved in part 1, lemma \ref{lem:mho}.
\end{proof}

\begin{lem}\label{lem:mhomrigide}
The class $\mho \M$ is rigid.
\end{lem}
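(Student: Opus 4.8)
The plan is to show $\mathrm{Ext}^1_\E(\mho\M,\mho\M)=0$. By definition every object $X\in\mho\M$ sits in a short exact sequence $0\to M\to I\to X\to 0$ with $M\in\M$ and $I$ injective, so we may write $X=\mho M$, and it suffices to prove that $\mathrm{Ext}^1_\E(\mho M,\mho M')=0$ for all $M,M'\in\M$ and any choice of such defining sequences; if one also wants $\mho\M$ to be closed under summands, the vanishing passes automatically to summands of $\mho M$, so nothing is lost.

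The quickest argument uses the stable category. Since $\E$ is Frobenius, $\mho$ is the suspension of the triangulated category $\underline{\E}$, and there are the usual natural isomorphisms $\mathrm{Ext}^1_\E(A,B)\cong\underline{\E}(\Omega A,B)\cong\underline{\E}(A,\mho B)$. In the lemma just proved above we saw that $\Omega\mho M\simeq M\oplus I$ with $I$ injective, i.e. $\Omega\mho M\cong M$ in $\underline{\E}$. Hence
\[
\mathrm{Ext}^1_\E(\mho M,\mho M')\cong\underline{\E}(\Omega\mho M,\mho M')\cong\underline{\E}(M,\mho M')\cong\mathrm{Ext}^1_\E(M,M')=0,
\]
the last equality being the rigidity of $\M$. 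Equivalently, $\underline{\E}(M,\mho M')=0$ because $\mho M'\in\overline{\M}^{\perp}$, that is, every morphism $M\to\mho M'$ factors through an injective — which is exactly the content recorded in Lemma \ref{lem:mho} ($\mho\M=\mathrm{pr}\M\cap\overline{\M}^{\perp}$), or follows directly from the isomorphism $\underline{\E}(M,\mho M')\cong\mathrm{Ext}^1_\E(M,M')$.

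If one prefers to argue purely at the level of exact sequences, the proof unwinds as follows. Take an extension $0\to\mho M'\xrightarrow{\iota}E\xrightarrow{\pi}\mho M\to 0$ and pull it back along the deflation $I\to\mho M$; since $I$ is projective ($\E$ being Frobenius), the pulled-back sequence splits, producing a lift $\sigma:I\to E$ with $\pi\sigma=(I\to\mho M)$. Pre-composing with the inflation $M\to I$ annihilates $\pi\sigma$, so $\sigma\circ(M\to I)$ factors through $\iota$, say $\iota\varphi=\sigma\circ(M\to I)$ for some $\varphi:M\to\mho M'$. By rigidity of $\M$, $\underline{\E}(M,\mho M')\cong\mathrm{Ext}^1_\E(M,M')=0$, so $\varphi$ factors through an injective, and since $M\to I$ is an inflation this forces $\varphi=\psi\circ(M\to I)$ for some $\psi:I\to\mho M'$. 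Then $\sigma-\iota\psi:I\to E$ still lifts $I\to\mho M$ but kills $M\to I$, hence descends through the cokernel to $\tau:\mho M\to E$, and since $I\to\mho M$ is a deflation we get $\pi\tau=\mathrm{id}_{\mho M}$, so the extension splits. The only slightly delicate step is the passage ``$\varphi$ factors through an injective, hence through $M\to I$'': a factorization $M\to J\to\mho M'$ with $J$ injective extends, along the inflation $M\to I$, to a map $I\to J$, giving the claimed factorization. Everything else is routine, and the conceptual inputs ($\mathrm{Ext}^1_\E(A,B)\cong\underline{\E}(\Omega A,B)$ and $\Omega\mho M\cong M$ in $\underline{\E}$) are standard for Frobenius categories, the latter being precisely what the preceding lemma supplies.
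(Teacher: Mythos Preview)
Your proof is correct and, at bottom, the same argument as the paper's. The paper chooses to show directly that every morphism $f:\mho M\to\mho^2 M'$ factors through an injective (i.e.\ $\underline{\E}(\mho M,\mho^2 M')=0$) by lifting $f$ to a morphism of the defining short exact sequences, obtaining $h:M\to\mho M'$, using rigidity of $\M$ to factor $h$ through an injective, and then pushing that factorization down to $f$. Your first argument is the same computation packaged via the standard Frobenius isomorphisms $\mathrm{Ext}^1_\E(\mho M,\mho M')\cong\underline{\E}(\mho M,\mho^2 M')\cong\underline{\E}(M,\mho M')\cong\mathrm{Ext}^1_\E(M,M')$, and your second argument is the Yoneda-extension rephrasing of the same reduction; in each case the only nontrivial input is that $\underline{\E}(M,\mho M')=0$, i.e.\ rigidity of $\M$. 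The abstract version is cleaner to read, while the paper's explicit diagram chase has the virtue of not invoking the triangulated structure of $\underline{\E}$.
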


\begin{proof}
Let $M,M' \in \M$. We show that any morphism $f:\mho M \to \mho^2 M'$ is zero in $\overline{\E}$. Let us introduce the following diagram:
\[ \xymatrix{M \ar^h[r] \ar_{\iota_M}[d] & \mho M' \ar^{\iota_{\mho M'}}[d] \\
I_M \ar^g[r] \ar_{\pi_M}[d] \ar@{.>}^\alpha[ur] & I_{\mho M'} \ar^{\pi_{\mho M'}}[d] \\
\mho M \ar_f[r] \ar@{.>}_\beta[ur] & \mho^2 M'}\]
As $I_M$ is also projective and $\pi_{\mho M'}$ is a deflation, there exists $g$ which makes the lower square commute. Then, there exists $h$ such that the upper square commutes. Now, $h$ factorizes through an injective module since $\M$ is rigid. Thus there exists $\alpha:I_M \to \mho M'$ such that $\alpha \circ \iota_M=h$. Then, there exists $\beta:\mho M \to I_{\mho M'}$ such that \[ g=\iota_{\mho M'} \circ \alpha + \beta \circ \pi_M.\]
Then,
\[f \circ \pi_M=\pi_{\mho M'} \circ g = \pi_{\mho M'} \circ(\iota_{\mho M'} \circ \alpha + \beta \circ \pi_M)=\pi_{\mho M'} \circ \beta \circ \pi_M\]
As $\pi_M$ is a deflation, this shows that $f=\pi_{\mho M'} \circ \beta$ and thus factorizes through an injective. Then $\mho \M$ is rigid.
\end{proof}

\begin{lem}\label{lem:pr}
Let $A,C$ be objects of $\E$. Let $M \in \M$. Suppose that we have a short exact sequence \[0 \to A \to C \to \mho M \to 0.\] Then $C \in \mathrm{pr}\M$ if and only if $A \in \mathrm{pr}\M$.
\end{lem}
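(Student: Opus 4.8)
The plan is to reduce both implications to the identification $\mathrm{pr}\M=\mathrm{copr}~\mho\M$ established just above, combined with the rigidity of $\mho\M$, the fact that (because $\M$ is closed under direct sums) a finite direct sum of objects of $\mho\M$ is again of the form $\mho N$ with $N\in\M$, and the two structural facts on exact categories recalled in the preliminaries: the Demonet--Liu lemma on pasting two short exact sequences, and the push-out axiom \textbf{E2} (push-out of an inflation along an arbitrary map is an inflation, with isomorphic cokernel).

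For $C\in\mathrm{pr}\M\Rightarrow A\in\mathrm{pr}\M$: first rewrite $C\in\mathrm{pr}\M=\mathrm{copr}~\mho\M$ as a short exact sequence $0\to C\to \mho N\to \mho N'\to 0$ with $N,N'\in\M$. Applying the Demonet--Liu lemma to the pair $0\to A\to C\to \mho M\to 0$ and $0\to C\to \mho N\to \mho N'\to 0$ produces an object $U$ fitting into short exact sequences $0\to A\to \mho N\to U\to 0$ and $0\to \mho M\to U\to \mho N'\to 0$. Since $\mho\M$ is rigid, $\mathrm{Ext}^1_\E(\mho N',\mho M)=0$, so the latter sequence splits and $U\cong \mho M\oplus \mho N'\in\mho\M$, hence $U\cong \mho N''$ for some $N''\in\M$. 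Then $0\to A\to \mho N\to \mho N''\to 0$ exhibits $A\in\mathrm{copr}~\mho\M=\mathrm{pr}\M$.

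For the converse $A\in\mathrm{pr}\M\Rightarrow C\in\mathrm{pr}\M$ I would dualize, using a push-out in place of the pasting lemma. Write $A\in\mathrm{copr}~\mho\M$ as $0\to A\xrightarrow{\,j\,}\mho M'\to \mho M''\to 0$ with $M',M''\in\M$, and let $D$ be the push-out of the inflation $A\to C$ along $j$. By \textbf{E2} the induced maps $C\to D$ and $\mho M'\to D$ are inflations, with $\mathrm{coker}(C\to D)\cong\mathrm{coker}(j)=\mho M''$ and $\mathrm{coker}(\mho M'\to D)\cong\mathrm{coker}(A\to C)=\mho M$. Rigidity of $\mho\M$ splits $0\to \mho M'\to D\to \mho M\to 0$, so $D\cong \mho M'\oplus \mho M\in\mho\M$, say $D\cong \mho N$; then $0\to C\to \mho N\to \mho M''\to 0$ shows $C\in\mathrm{copr}~\mho\M=\mathrm{pr}\M$.

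I do not anticipate a genuine obstacle: the only delicate points are to justify that the quotient sequences in question are short exact in the ambient exact structure (invoking the cited lemmas rather than naive (co)kernel manipulations) and the bookkeeping that keeps $\mho\M$ closed under biproducts so as to stay of the form $\mho(-)$. If one prefers to avoid the $\mathrm{copr}$ description, the first implication can instead be obtained by pulling a presentation $0\to M_1\to M_0\to C\to 0$ back along $A\hookrightarrow C$, yielding $0\to M_1\to K\to A\to 0$ together with $0\to K\to M_0\to \mho M\to 0$, and then proving $K\in\M$: forming the pull-back $P$ of $M_0\to \mho M$ and $I_M\to \mho M$, the sequence $0\to M\to P\to M_0\to 0$ splits by rigidity and $0\to K\to P\to I_M\to 0$ splits since $I_M$ is projective ($\E$ being Frobenius), whence $K\oplus I_M\cong M\oplus M_0\in\M$ and $K\in\M$ because $\M$ is closed under summands. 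I would keep this as a fallback.
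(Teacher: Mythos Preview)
Your proposal is correct and follows essentially the same route as the paper: both directions are obtained by passing to the description $\mathrm{pr}\M=\mathrm{copr}\,\mho\M$, forming the appropriate push-out (equivalently, the Demonet--Liu pasting diagram), and using rigidity of $\mho\M$ to split the resulting extension between two objects of $\mho\M$. The paper simply treats the two implications in the opposite order and is terser about why the biproduct $\mho M\oplus\mho M'$ lies again in $\mho\M$; your care on that point is welcome. Your fallback argument (pulling back a presentation of $C$ and showing the intermediate kernel lies in $\M$) is a genuinely different, more ``$\mathrm{pr}$-side'' proof of the implication $C\in\mathrm{pr}\M\Rightarrow A\in\mathrm{pr}\M$, but it is not needed here.
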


\begin{proof}
Using Lemma \ref{lem:copr}, we will show that $C \in \mathrm{copr}~\mho \M$ if and only if $A \in \mathrm{copr}~\mho \M$. First, suppose that $A \in \mathrm{copr}~\mho \M$. Then, we have the following diagram:
\[ \xymatrix{A \ar[r] \ar[d] & \mho M_1 \ar[r] \ar[d] & \mho M_0 \ar@{=}[d] \\ C \ar[r] \ar[d] & E \ar[r] \ar[d] & \mho M_0 \\ \mho M_2 \ar@{=}[r] & \mho M_2 &}\]
Where $E$ is the pushout of the morphisms $A \to \mho M_1$ and $A \to C$. By lemma \ref{lem:mhomrigide}, there is a section to the short exact sequence \[0 \to \mho M_1 \to E \to \mho M_2 \to 0,\] and $E \simeq \mho M_1\oplus \mho M_2$. Let $M=M_1 \oplus M_2$. Then we have a short exact sequence $0 \to C \to \mho M \to \mho M_0 \to 0$.

Conversely, if $C \in \mathrm{copr}~\mho \M$. Then, there exists a short exact sequence \[0 \to C \to \mho M^0 \to \mho M^1 \to 0.\] Let us take the push-out:

\[ \xymatrix{A \ar[r] \ar@{=}[d] & C \ar[r] \ar[d] & \mho M \ar[d] \\ A \ar[r] & \mho M^0 \ar[r] \ar[d] & E \ar[d] \\ & \mho M^1 \ar@{=}[r] & \mho M^1}\]
As $\mho \M$ is rigid from the previous lemma, then $E \in \mho \M$ showing that $A \in \mathrm{copr}\mho \M$.
\end{proof}

\begin{rmk}
A dual version of this lemma holds: If there is a short exact sequence $0 \to M \to C \to A \to 0$, then $A \in \mathrm{pr}\M$ if and only if $C \in \mathrm{pr}\M$.
\end{rmk}

\subsection{Weak equivalences}\label{sec:not}

In this section, we give a characterization of the weak equivalences that will be useful.

\begin{defi}
Let $\mathcal{A}$ be a subcategory of $\E$. Let $g:Y \to Z$ be a morphism. We say that $g \in \overline{\mathcal{A}}^\perp$ if, for any morphism $b: A \to Y$, with $A \in \mathcal{A}$, the morphism $g \circ b$ factorizes through an injective module.
\end{defi}

\begin{lem}\label{lem:carac}
Let $X,Y \in \E$. Let $f : X \to Y$. Then $Gf$ is an isomorphism if and only if, for any pushout
\[\xymatrix{X \ar^f[r] \ar_{\iota_X}[d] & Y \ar^g[d] \\
I_X \ar^u[r] \ar_{\pi_X}[d] & Z \ar[d] \\
\mho X \ar@{=}[r] & \mho X}\]
we have $\overline{g} : Y \to Z \in \overline{\M}^\perp$ and, for any pullback
\[\xymatrix{X \ar^f[r] & Y \\
Z \ar[r] \ar_{\tilde{g}}[u] & P_Y \ar[u] \\
\mho Y \ar@{=}[r] \ar[u] & \mho Y \ar[u]}\]
we have $\overline{\tilde{g}}: Z \to X \in \overline{\M}^\perp$.
\end{lem}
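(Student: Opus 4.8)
The plan is to pass to the stable ($=$ costable) category $\overline{\E}$, which, $\E$ being Frobenius, is triangulated with suspension functor $\mho$, and to recognise the two auxiliary objects in the statement as, respectively, a cone and a fibre of $f$ in $\overline{\E}$. Observe first that $G$ factors through $\overline{\E}$ as the restricted Yoneda functor $\bar Z\mapsto\overline{\E}(-,\bar Z)|_{\overline{\M}}$, so that $Gf$ is an isomorphism in $\mathrm{mod}\,\overline{\M}$ if and only if $\overline{\E}(M,f)\colon\overline{\E}(M,X)\to\overline{\E}(M,Y)$ is bijective for every $M\in\M$; equivalently, $Gf$ is at once a monomorphism and an epimorphism of $\overline{\M}$-modules. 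I will argue that the first displayed condition controls the surjectivity of $Gf$ and the second its injectivity --- modulo a twist by the shift $\mho$ that must be absorbed at the end.

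First I would treat the pushout diagram. Since $\iota_X$ is an inflation, its pushout square is bicartesian by B\"uhler's lemma, so $0\to X\to I_X\oplus Y\to Z\to 0$ and $0\to Y\xrightarrow{g}Z\to\mho X\to 0$ are short exact; passing to $\overline{\E}$, where $I_X\cong 0$, this produces a distinguished triangle $X\xrightarrow{f}Y\xrightarrow{g}Z\xrightarrow{w}\mho X$, so $Z$ is a cone of $f$. Because $I_X$ is injective, every morphism into it vanishes in $\overline{\E}$, so the condition $\overline{(u~g)}\in\overline{\M}^\perp$ unwinds to: $g\circ b$ factors through an injective for every $b\colon\mho A\to Y$ with $A\in\M$, i.e. $g_\ast=0$ on $\overline{\E}(\mho A,Y)$. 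Applying $\overline{\E}(\mho A,-)$ to the triangle and using exactness at $\overline{\E}(\mho A,Y)$, this is equivalent to surjectivity of $f_\ast\colon\overline{\E}(\mho A,X)\to\overline{\E}(\mho A,Y)$. Dually, in the pullback diagram the lower square is the pullback of the deflation $P_Y\to Y$ along $f$ (axiom E2$'$), hence bicartesian; writing $K$ for the common kernel of $P_Y\to Y$ and $Z\xrightarrow{\tilde g}X$, the sequence $0\to K\to Z\xrightarrow{\tilde g}X\to 0$ has connecting morphism $f$, giving a triangle $K\to Z\xrightarrow{\tilde g}X\xrightarrow{f}Y$ that exhibits $Z$ as a fibre of $f$. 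Since $P_Y$ is projective ($=$ injective), the condition $Z\to X\oplus P_Y\in\overline{\M}^\perp$ unwinds to $\tilde g_\ast=0$ on $\overline{\E}(\mho A,Z)$, equivalent by exactness to injectivity of $f_\ast\colon\overline{\E}(\mho A,X)\to\overline{\E}(\mho A,Y)$.

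Putting the two together, the conjunction of the two displayed conditions says precisely that $\overline{\E}(\mho A,f)$ is bijective for all $A\in\M$, whereas $Gf$ being an isomorphism says that $\overline{\E}(A,f)$ is bijective for all $A\in\M$. Reconciling these two formulations is the step I expect to be the main obstacle: one needs that $\W$ is stable under the shift $\mho$ of $\overline{\E}$ (equivalently under $\Omega$), so that $Gf$ is an isomorphism if and only if $G(\mho f)$ is. For this I would use that the cone of a morphism of $\W$ lies, after stabilisation, in $\overline{\M}^\perp$, together with the rigidity of $\mho\M$ and Lemma~\ref{lem:mho} identifying $\mathrm{pr}\M\cap\overline{\M}^\perp$ with $\mho\M$; the contravariant finiteness of $\mho\M$ (Lemma~\ref{lem:approx}) and the presence of the projectives in $\M$ are what supply the approximations and presentations needed to make this passage explicit. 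A more pedestrian alternative, avoiding $\overline{\E}$ entirely, is to run exactly the same bookkeeping by hand inside $\E$: lifting maps out of $\M$ along injective envelopes and projective covers, and chasing the two $3\times 2$ diagrams directly; this is longer but entirely elementary once the identifications above are in place.
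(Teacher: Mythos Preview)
Your triangulated-category packaging is exactly the right idea and is essentially what the paper does (the paper simply writes out the long exact sequences in $\overline{\E}$ by hand rather than invoking a distinguished triangle). However, the ``main obstacle'' you identify --- reconciling bijectivity of $\overline{\E}(\mho A,f)$ with bijectivity of $\overline{\E}(A,f)$ --- is a phantom, and your proposed resolution of it via a shift-invariance of $\W$ would not work in general (there is no reason for $\mho\M$ and $\M$ to define the same orthogonal).

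The issue is your reading of the condition ``$\overline{(u~g)}\in\overline{\M}^\perp$''. In the paper's own proof of this lemma, and consistently elsewhere (e.g.\ in the characterisation of fibrations, where the distinct notation $\mho\overline{\M}^\perp$ is used for the condition involving maps out of $\mho M$), the intended meaning is that $\overline{\E}(M,g)=0$ for every $M\in\M$ --- i.e.\ $g\circ h$ factors through an injective for every $h\colon M\to Y$ with $M\in\M$, not for every $h\colon\mho M\to Y$. The formal definition you are relying on (with domain $\mho A$) appears to be a slip in the paper. With the intended reading, your argument finishes immediately: applying $\overline{\E}(M,-)$ to the cone triangle gives that $g_\ast=0$ on $\overline{\E}(M,Y)$ is equivalent to surjectivity of $\overline{\E}(M,f)$, and applying it to the fibre triangle gives that $\tilde g_\ast=0$ on $\overline{\E}(M,Z)$ is equivalent to injectivity of $\overline{\E}(M,f)$; together these say exactly that $Gf$ is an isomorphism. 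No shift needs to be absorbed.
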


\begin{defi}
For a morphism $f:X \to Y$, we call a cone of $f$, any morphism $g$, obtained from a push-out of an inflation $X \to I$ for some injective $I$ along $f$. Then, we notice that $g$ is unique up to isomorphism in $\overline{\E}$.
\end{defi}

\begin{proof}
If $Gf$ is an isomorphism. Any diagram
\[\xymatrix{X \ar^f[r] \ar_{\iota_X}[d] & Y \ar^g[d] \\
I_X \ar^u[r] \ar_{\pi_X}[d] & Z \ar[d] \\
\mho X \ar@{=}[r] & \mho X}\]
induces a short exact sequence \[\xymatrix{0 \ar[r] & X \ar[r] & I_X \oplus Y \ar^{~~~(u~g)}[r] & Z \ar[r] & 0}.\] Let us show that $(\overline{u~g}) \in \overline{\M}^\perp$. Since $u : I_X \to Z$, with $I_X$ injective we already have that $\overline{\mathcal{E}}(\M,u)=0$. Let
\[\mathcal{E}(\M,g) : \mathcal{E}(\M,Y) \to \mathcal{E}(\M,Z) \] be the morphism which, with
$h : M \in \M \to Y$ associates the morphism $g \circ h$.

As $Gf$ is an isomorphism, there exists $h' \in \mathcal{E}(\M,X)$ such that $f \circ \overline{h'}=\overline{h}$. Then \[\overline{h'} \to f \circ \overline{h'}=\overline{h} \to g \circ f \circ \overline{h'}=g \circ h=0,\] and $\overline{\mathcal{E}}(\M,g)=0$.

The other case is similar and left to the reader.
%

Conversely, let us suppose that $\xymatrix{I_X \oplus Y \ar^{\overline{(\tilde{u}~\tilde{g})}}[r] & Z \in \overline{\M}^\perp}$ and $\xymatrix{Z \ar[r] & X \oplus P_Y \in \overline{\M}^\perp}$. Let us introduce the short exact sequences
\begin{equation}\label{eq:1}
\xymatrix{0 \ar[r] & X \ar[r] & I_X \oplus Y \ar^{(\tilde{u}~\tilde{g})}[r] & Z \ar[r] & 0}
\end{equation}
and
\begin{equation}\label{eq:2}
\xymatrix{0 \ar[r] & Z \ar[r] & 
X \oplus P_Y \ar[r] & Y \ar[r] & 0}
\end{equation}

Apply the functor $G=\mathcal{E}(A,-)$ where $A \in \M$ to the short exact sequence (\ref{eq:1}). As $G$ is left exact, we have the following long exact sequence:

\[ 0 \to \mathcal{E}(A,X) \to \mathcal{E}(A,Y \oplus I_A) \to \mathcal{E}(A,Z) \to \mathrm{Ext}^{1}(A,X) \to \cdots \]

If we factorize through the injective modules, then we get:

\[ \cdots \to \overline{\E}(A,\Omega Z) \to \overline{\mathcal{E}}(A,X) \to \overline{\mathcal{E}}(A,Y) \to \overline{\mathcal{E}}(A,Z) \to \overline{\E}(A,\mho X) \to \cdots \]

Moreover, we have $\overline{\mathcal{E}}(A,g)=0$ because $A \in \M$ and $g \in \overline{\M}^\perp$. Then $Gf$ is an epimorphism.

Now let us apply the functor $G$ to the short exact sequence (\ref{eq:2}) and factorize through projective modules. We then have:

\[ \cdots \to \underline{\E}(A,\Omega Y) \to \underline{\mathcal{E}}(A,Z) \to \underline{\mathcal{E}}(A,X) \to \underline{\mathcal{E}}(A,Y) \to \mathrm{Ext}^{1}(A,Z) \to \cdots \]

We have $\underline{\mathcal{E}}(A,\tilde{g})=0$ because $A \in \M$ and $\tilde{g} \in \overline{\M}^\perp=\underline{\M}^\perp$. Then $Gf$ is a monomorphism, thus an isomorphism.
\end{proof}

\subsection{Fibrations and cofibrations}

Let us recall the notations:

Let $\mathcal{W}$ be the class of morphisms $f$ such that $Gf$ is an isomorphism. Such morphisms are called weak equivalences. Let \[J= \{ f : 0 \to \mho M, M \in \M \}\] and \[I=\{ f : M_0 \to X \oplus I_0, X \in \mathrm{pr}\M \} \cup \{0 \to M,M \in \M \}\] where $0 \to M_1 \to M_0 \to X \to 0$ is a short exact sequence and $I_0$ appears in the short exact sequence $0 \to M_0 \to I_0 \to \mho M_0$.

The morphisms of $J^\square$ are called fibrations and compose the class $\Fib$. The following lemma characterizes fibrations. We recall that $g \in \mho\overline{\M}^\perp$ if, for any morphism $b:\mho M \to Y$, with $M \in \M$, the morphism $g \circ b$ factorizes through an injective module.

\begin{lem}\label{lem:caracfib}
Suppose that $\E$ is a weakly idempotent complete category. Let $f:X \to Y$ be a morphism in $\E$. Then $f$ is a fibration if and only if $f$ is a deflation, and $g \in \mho\overline{\M}^\perp$, where $g$ is a cone of $f$, defined by
\[\xymatrix{X \ar^f[r] \ar_{\iota_X}[d] & Y \ar^g[d] \\
I_X \ar^u[r] \ar_{\pi_M}[d] & Z \ar[d] \\
\mho X \ar@{=}[r] & \mho X.}\]
\end{lem}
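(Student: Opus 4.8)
The plan is to analyse everything through the cone square in the statement, so I first record its structural properties. The square with $f,\iota_X$ out of $X$ and $g,u$ into $Z$ is a push‑out by construction; since $\iota_X$ is an inflation, Buhler's lemma (quoted above) makes it bicartesian, and axiom E2 makes $g$ an inflation with cokernel $\mho X$. Hence the middle column is a short exact sequence $0 \to Y \xrightarrow{g} Z \xrightarrow{q} \mho X \to 0$, the bottom square reads $q\circ u = \pi_X$, and the top square reads $g\circ f = u\circ\iota_X$. I use these facts freely below, together with the fact that in a Frobenius category an object is injective if and only if it is projective.

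\emph{Fibration $\Rightarrow$ deflation and cone condition.} By Lemma \ref{lem:approx}, $\mho\M$ is contravariantly finite; choose a $\mho\M$-approximation $a:\mho M_0 \to Y$. A projective cover $P_Y \to Y$ is a deflation, and $P_Y \in \mho\M$ (as noted in the proof of Lemma \ref{lem:approx}), so $P_Y \to Y$ factors through $a$; since $P_Y \to Y$ is a deflation, weak idempotent completeness forces $a$ to be a deflation. As $(0\to\mho M_0)\in J$ and $f\in J^\square$, there is $a':\mho M_0 \to X$ with $f\circ a' = a$; as $f\circ a'$ is a deflation, so is $f$. For the cone condition, take $b:\mho M \to Y$ with $M\in\M$; regarding $b$ as the bottom edge of a lifting problem against $(0\to\mho M)\in J$ produces $\tilde b:\mho M\to X$ with $f\circ\tilde b = b$, whence $g\circ b = g\circ f\circ\tilde b = u\circ(\iota_X\circ\tilde b)$ factors through the injective object $I_X$. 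Thus $g\in\mho\overline{\M}^\perp$.

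\emph{Deflation and cone condition $\Rightarrow$ fibration.} Assume $f$ is a deflation and $g\in\mho\overline{\M}^\perp$, and let $b:\mho M \to Y$ with $M\in\M$; we must lift $b$ along $f$. By hypothesis $g\circ b = \mu\circ\nu$ with $\nu:\mho M \to J$, $\mu:J\to Z$ and $J$ injective, hence projective. Since $\pi_X:I_X\to\mho X$ is a deflation, the map $q\circ\mu:J\to\mho X$ lifts to $\rho:J\to I_X$ with $\pi_X\circ\rho = q\circ\mu$; then $q\circ(u\circ\rho-\mu)=\pi_X\circ\rho - q\circ\mu = 0$, so, as $g$ is a kernel of $q$, there is $\sigma:J\to Y$ with $g\circ\sigma = u\circ\rho-\mu$. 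Now $g\circ(b+\sigma\circ\nu) = \mu\circ\nu + (u\circ\rho-\mu)\circ\nu = u\circ(\rho\circ\nu)$, so the pull-back property of the cone square yields $h:\mho M\to X$ with $f\circ h = b+\sigma\circ\nu$. Finally, $f$ being a deflation and $J$ projective, $\sigma$ lifts to $\sigma':J\to X$ with $f\circ\sigma'=\sigma$, and then $f\circ(h-\sigma'\circ\nu) = b$. Hence $f\in J^\square$.

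The two ``$\Rightarrow$'' statements and the bookkeeping for the bicartesian square are routine. The crux is the converse: a first attempt only lifts $b$ up to an error term, and the real content is that this error can be arranged to factor through the projective (= injective) object $J$ supplied by $g\in\mho\overline{\M}^\perp$, which is precisely what makes it correctable once we know $f$ is a deflation. The main obstacle is thus to set up the argument — the lift $\rho$ of $q\circ\mu$ along $\pi_X$ and the resulting $\sigma$ — so that the pull-back produces $f\circ h = b+\sigma\circ\nu$ with an error of exactly this correctable shape.
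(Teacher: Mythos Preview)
Your proof is correct and follows essentially the same strategy as the paper. The forward direction is identical in spirit (lift something in $\mho\M$ through $f\in J^\square$ to see $f$ is a deflation; lift $b$ through $f$ to see $g\circ b$ factors through $I_X$). For the converse, the paper organises the computation around the short exact sequence $0\to X\to Y\oplus I_X\xrightarrow{(g\ u)} Z\to 0$: it lifts the map $I\to Z$ along the deflation $(g\ u)$ using projectivity of $I$, then uses that $X$ is the kernel to produce $\beta:\mho M\to X$ with $f\circ\beta=b-\alpha_1\circ\iota$, and finally corrects the error by lifting $\alpha_1$ along $f$. You instead exploit the bicartesian square directly: you lift $q\circ\mu$ along $\pi_X$, feed the result into the pull-back property, and correct via $\sigma'$. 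These are two equivalent unpackings of the same bicartesian square, and the final correction step (lift the residual error along the deflation $f$ using projectivity of the injective object) is identical.
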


\begin{proof}
Let $f:X \to Y$ be a deflation. Let $g$ be a cone of $f$. Suppose that $g \in \mho\overline{\M}^\perp$. Let us show that $f \in J^\square$.
Let $b:\mho M \to Y$, where $\mho M \in \mho \M$. As $(g~u) \in \mho\overline{\M}^\perp$, then the morphism $(g~u) \circ \begin{pmatrix}b \\ 0 \end{pmatrix}$ factorizes through an injective. There exists $\iota:\mho M \to I$, $\alpha_1 : I \to Y$ and $\alpha_2: I \to I_x$ such that we have \[ \begin{pmatrix} g & u \end{pmatrix} \begin{pmatrix} b-\alpha_1 \circ \iota \\ 0 - \alpha_2 \circ \iota \end{pmatrix} = 0. \] We have the following diagram:

\[ \xymatrix{
& & X \ar^{\left(^f_{\iota_X}\right)}[d] \\
\mho M \ar@{.>}^\beta[urr] \ar^{\left(^b_0\right)}[rr] \ar_{\iota}[dr] & & Y \oplus I_X \ar^{(g~u)}[dd] \\
& I \ar@{.>}^\gamma[uur] \ar@{.>}_{\left(^{\alpha_1}_{\alpha_2}\right)}[ur] & \\
& & Z.} \]

Then, there exists $\beta:\mho M \to X$ such that \[ \begin{pmatrix} b-\alpha_1 \circ \iota \\ \alpha_2 \circ \iota \end{pmatrix} = \beta \circ \begin{pmatrix} f \\ \iota_X \end{pmatrix}. \]

As $I$ is also projective since $\E$ is a Frobenius category, and $f$ is a deflation, there exists $\tilde{\alpha_1}:I \to X$ such that \[ \alpha_1 = f \circ \tilde{\alpha_1}. \]
Then, we have \[f \circ (\beta - \tilde{\alpha_1} \circ \iota)=b\] and this shows the first implication.

On the other hand, let $g$ be a cone of $f:X \to Y$. If $f \in J^\square$, then it is a deflation. Indeed, let $\pi_Y:P_Y \to Y$ be a projective pre-cover of $Y$ (we can equivalently introduce a $\mho \M$-approximation of $Y$). There exists a morphism $\varphi : P_Y \to X$ which makes the diagram commutative. As $f \circ \varphi = \pi_Y$ is a deflation and $\E$ is a weakly idempotent complete category, then $f$ is also a deflation. Let us now show that $g \in \mho\overline{\M}^\perp$. Let $h : B \to Y$, where $B \in \mho\M$. We have to show that $\overline{g \circ h}=0$. As $f \in J^\square$, there exists $\varphi : B \to X$ such that $f \circ \varphi=h$. Then $g \circ h= g \circ f \circ \varphi$. As we know $g \circ f$ factorize through an injective, so does $g \circ h$. Then $g \in \mho\overline{\M}^\perp$.
\end{proof}

\subsection{Factorizations}

We start by an important Lemma.

\begin{lem}\label{lem:ega}
Let $\E$ be a weakly idempotent complete Frobenius category. Assume that $\M \subseteq \E$ is a rigid, contravariantly finite subcategory of $\E$ containing all the injective objects, and stable under taking direct sums and summands.

Let \[J= \{ f : 0 \to \mho M, M \in \M \}\] and \[I=\{ f : M_0 \to X \oplus I_0, X \in \mathrm{pr}\M \} \cup \{0 \to M,M \in \M \}.\]
Then
\[I^\square = J^\square \cap \W.\]
\end{lem}

\begin{proof}
First, let us show that $J^\square \cap \W \subseteq I^\square$. Let $f \in J^\square \cap \W$. Let
\[\xymatrix{M_0 \ar^a[r] \ar_{\left(^{h}_{\iota_0}\right)}[d] & X \ar^f[d] \\
A \oplus I_0 \ar_{(b_1~b_2)}[r] & Y} \] be a commutative square where $\begin{pmatrix} h \\ \iota_0 \end{pmatrix} \in I$, meaning that $\iota_0:M_0 \to I_0$ is an injective cover, $A \in \mathrm{pr}\M$, and $M_1 \to M_0 \to A$ is a short exact sequence, with $M_1,M_0 \in \M$.
Then we have \begin{equation}
\label{eq:J1}b_1 \circ h + b_2 \circ \iota_0=f \circ a.\end{equation}
As $f$ is a deflation, by Lemma \ref{lem:caracfibex} there exists $K$ such that $0 \to K \to X \to Y \to 0$ is a short exact sequence. Since $I_0$ is projective, the pullback of $I_0 \to Y$splits and we have a short exact sequence \[0 \to K \oplus I_0 \to I_0 \to Y \to 0.\]

From equation (\ref{eq:J1}), we have that \[b_1 \circ h=(f~-b_2) \circ \begin{pmatrix}a \\ \iota_0 \end{pmatrix}.\]
Then, there exists $\begin{pmatrix}c \\ \mu \end{pmatrix}$ such that
\[\xymatrix{
0\ar[d] & 0\ar[d] \\
M_1 \ar_k[d] \ar^{\left(^c_\mu\right)}[r] & K \oplus I_0 \ar^{\left(^{\tilde{g}~\alpha}_{0~1}\right)}[d] \\
M_0 \ar_h[d] \ar^{\left(^a_{\iota_0}\right)}[r] & X \oplus I_0 \ar^{(f~-b_2)}[d] \\
A \ar[d] \ar^{b_1}[r] & Y \ar[d] \\
0 & 0}\] is a morphism of short exact sequences.

As $\tilde{g} \in \overline{\M}^\perp$, the morphism $M_1 \to K \to X$ factorizes through an injective. We can suppose without loss of generality, that it factorizes through $I_0$. Then we have:
\[\xymatrix{
M_1 \ar_k[dd] \ar^{\left(^c_\mu\right)}[rr] \ar^{\alpha_A}[dr] & & K \oplus I_0 \ar^{\left(^{\tilde{g}~\alpha}_{0~1}\right)}[dd] \\
& I_0 \ar^{\left(^{\beta_1}_{\beta_2}\right)}[dr] & \\
M_0 \ar@{.>}^{\iota_0}[ur]\ar^{\left(^a_{\iota_0}\right)}[rr] & & X \oplus I_0 }\] with \[\begin{pmatrix} \beta_1 \\ \beta_2 \end{pmatrix} \circ \alpha_A = \begin{pmatrix} a \circ k \\ \iota_0 \circ k \end{pmatrix}.\]
As $I_0$ is injective, and $k$ is an inflation, we can lift $\alpha_A$ to $\iota_0 : M_0 \to I_0$ such that $\iota_0 \circ k=\alpha_A$. Then,
\[\xymatrix{M_0 \ar_h[dd] \ar^{\left(^a_{\iota_0}\right)}[rr] \ar^{\iota_0}[dr] & & X \oplus I_0 \ar^{(f~-b_2)}[dd] \\
& I_0 \ar_{\left(^{\beta_1}_{\beta_2}\right)}[ur] & \\
A \ar^{b_1}[rr] & & Y }.\]
Then, there exists $\begin{pmatrix} \alpha_1 \\ \alpha_2 \end{pmatrix}:A \to X \oplus I_0$ such that \begin{equation}\label{eq:J2} \begin{pmatrix} a \\ \iota_0 \end{pmatrix} = \begin{pmatrix} \beta_1 \\ \beta_2 \end{pmatrix} \circ \iota_0 + \begin{pmatrix} \alpha_1 \\ \alpha_2 \end{pmatrix} \circ h. \end{equation}
Then, we have a morphism $A \oplus I_0 \to X$ which makes the upper triangle commute:
\[\xymatrix{M_0 \ar^a[r] \ar_{\left(^{h}_{\iota_0}\right)}[d] & X \ar^f[d] \\
A \oplus I_0 \ar_{(b_1~b_2)}[r] \ar^{(\alpha_1 ~ \beta_1)}[ur] & Y}. \]
We have \[(\alpha_1~\beta_1) \circ \begin{pmatrix} h \\ \iota_0 \end{pmatrix}=\alpha_1 \circ h + \beta_1 \circ \iota_0=a\] from equation \ref{eq:J2}.

The morphism $M_0 \to A \oplus I_0$ is an inflation. Let us introduce its cokernel $C$. Then we have the push-out:
\[ \xymatrix{
M_0 \ar^{h}[r] \ar[d] & A \ar^{\gamma_1}[d] \ar@/^1pc/^{f \circ \alpha_1 - b_1}[ddr] & \\
I_0 \ar_{\gamma_2}[r] \ar@/_1pc/_{f \circ \beta_1-b_2}[drr] & C \ar^{\exists \psi}[dr] & \\
& & Y} \]
Then there exists a unique $\psi:C \to Y$ such that:
\begin{equation}\label{eq:J3} \psi \circ \gamma_2=f \circ \beta_1-b_2 \end{equation}
and
\begin{equation}\label{eq:J4} \psi \circ \gamma_1=f \circ \alpha_1-b_1 \end{equation}
The push-out $C$ is exactly the cokernel of the morphism $M_1 \to M_0 \to I_0$. Then $C \simeq \mho M_1$. As $f \in J^\square$, if we still denote by $\psi$ the morphism from $\mho M_1$ to $Y$, then there exists $\zeta:\mho M_1 \to X$ such that $f \circ \zeta = \psi$. Then the equations \ref{eq:J3} and \ref{eq:J4} give respectively \[f \circ \zeta \circ \gamma_1= f \circ \alpha_1 -b_1\] so \[f \circ (\alpha_1 -\zeta \circ \gamma_1)=b_1\] and \[f \circ \zeta \circ \gamma_2=f \circ \beta_1-b_2\] then \[f \circ (\beta_1 - \zeta \circ \gamma_2)=b_2\] and, the morphism \[(\alpha_1-\zeta \circ \gamma_1 ~ \beta_1-\zeta \circ \gamma_2):A \oplus I_0 \to X\] make both triangles commute.

The case $0 \to M$ is left to the reader.

Now, let us show the inverse inclusion. We have immediately $I^\square \subseteq J^\square$ since $J \subseteq I$. Let $f \in I^\square$. Let us show that, using the notations explained in section \ref{sec:not}, $g \in \overline{\M}^\perp$ and $\tilde{g} \in \overline{\M}^\perp$. First, let $b:M \to Y$ be a morphism, with $M \in \M$. As $f \in I^\square$, there exists $\varphi:M \to X$ such that $f \circ \varphi=b$ then $g \circ b=g \circ f \circ \varphi$ and this morphism factorizes through an injective. This shows that $g \in \overline{\M}^\perp$.
Second, Let $K$ be the kernel of $f$. Let $b:M \to K$. Let $N$ be an $\M$-approximation of $X$. We denote it by $a:N \to X$. As $a$ is an $\M$-approximation, there exists $h:M \to N$ such that $\tilde{g} \circ b=a \circ h$. Let $\iota_M:M \to I_M$ be the canonical injection. As $\begin{pmatrix} h \\ \iota_M \end{pmatrix}$ is an inflation, we introduce $C$ its cokernel. Let $(k~u):N \oplus I_M \to C$. If we put $(a~0):N \oplus I_M \to X$, then there exists $r:C \to Y$ such that there is a morphism of short exact sequences:
\[\xymatrix{
0\ar[d] & 0\ar[d] \\
M \ar_{\left(^h_{\iota_M}\right)}[d] \ar^{b}[r] & K \ar^{\tilde{g}}[d] \\
N \oplus I_M \ar_{(k~u)}[d] \ar^{(a~0)}[r] & X \ar^{f}[d] \\
C \ar[d] \ar^{r}[r] & Y \ar[d] \\
0 & 0}\]
We are going to build an element of $I$ from $N \oplus I_M \to C$. We add $J$ the injective envelope of $N \oplus I_M$ ($J=I_M \oplus I_N$). Then the morphism \[\begin{pmatrix} k & u \\ \iota_1 & \iota_2 \end{pmatrix} : N \oplus I_M \to C \oplus J\] is in $I$. Then the following square is commutative
\[\xymatrix{N \oplus I_M \ar[r] \ar[d] & X \ar[d] \\ C \oplus J \ar[r] & Y }\] and as $f \in I^\square$, there exist $(\varphi_1 ~ \varphi_2):C \oplus J \to X$ such that $f \circ \varphi_1 = r$ and $f \circ \varphi_2=0$ and \[(\varphi_1 \circ k + \varphi_2 \circ \iota_1 ~ \varphi_1 \circ u + \varphi_2 \circ \iota_2)=(a~0).\]
Then we have
\begin{equation}
\begin{aligned} \tilde{g} \circ b & = a \circ h \\
& = (a~0) \circ \begin{pmatrix} h \\ \iota_M \end{pmatrix} \\
& = (\varphi_1 ~ \varphi_2) \circ \begin{pmatrix} k & u \\ \iota_1 & \iota_2 \end{pmatrix} \circ \begin{pmatrix} h \_ \iota_M \end{pmatrix} \\
& = \begin{pmatrix}\varphi_1 \circ k + \varphi_2 \circ \iota_1 ~ \varphi_1 \circ u + \varphi_2 \circ \iota_2\end{pmatrix} \circ \begin{pmatrix} h \\ \iota_M \end{pmatrix} \\
& = (\varphi_1~\varphi_2) \circ \begin{pmatrix} 0 \\ \iota_1 \circ h + \iota_2 \circ \iota_M. \end{pmatrix} \end{aligned}
\end{equation}
This shows that $\tilde{g} \circ b$ factorizes through an injective, thus $\tilde{g} \in \overline{\M}^\perp$.

Then we have shown that $I^\square=J^\square \cap \W$.
\end{proof}

In addition to the first factorization built in Lemma \ref{cor:fact}, we also have a second factorization:

\begin{lem}\label{lem:fac2}
Any morphism $f:X\to Y$, where $X \in \mathrm{pr}\M$ can be factorized through a morphism in $^\square (I^\square)$ followed by a morphism in $I^\square$.
\end{lem}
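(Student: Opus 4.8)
I would use a mapping‑cylinder type argument: first replace $Y$ by a cofibrant object lying over it, lift $f$ to that object, and then enlarge it so that the lift becomes an inflation whose cokernel lies in $\mathrm{pr}\M$. Concretely: since $\M$ (hence $\mathrm{pr}\M$) is contravariantly finite, fix a right $\mathrm{pr}\M$‑approximation $q\colon A\to Y$ as produced in Lemma \ref{lem:prmapp}, with $A\in\mathrm{pr}\M$; by Lemma \ref{lem:prmfib} this $q$ is a trivial fibration. As $X\in\mathrm{pr}\M$ is cofibrant (Lemma \ref{lem:cof}), the lifting problem given by $0\to A$, by $q$ and by $f\colon X\to Y$ has a solution $\tilde f\colon X\to A$ with $q\tilde f=f$. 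Finally, since $\mathrm{pr}\M=\mathrm{copr}\,\mho\M$, fix a short exact sequence $0\to X\xrightarrow{u}\mho N_0\xrightarrow{v}\mho N_1\to 0$ with $N_0,N_1\in\M$.

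\textbf{The factorization.} Set $Z:=A\oplus\mho N_0$, let $i\colon X\to Z$ be the map with components $\tilde f$ and $-u$, and let $p\colon Z\to Y$ be the map which is $q$ on $A$ and $0$ on $\mho N_0$; then $p\circ i=q\tilde f=f$. To see that $p\in I^\square$, write $p$ as the composite $Z=A\oplus\mho N_0\xrightarrow{\mathrm{pr}_A}A\xrightarrow{q}Y$. The projection $\mathrm{pr}_A$ is a split deflation; it lies in $J^\square$ (a map $\mho M\to A$ lifts with zero second component), it is a weak equivalence because $G(\mho N_0)=0$ (rigidity of $\M$ forces every map $M\to\mho N_0$ to factor through an injective, so $G(\mathrm{pr}_A)$ is an isomorphism), and in fact it lies in $I^\square$: the generators $0\to M$ are handled by the zero second component, and the generators $M_0'\to X'\oplus I_0'$ by extending the induced map $M_0'\to\mho N_0$ along the inflation $M_0'\to I_0'$ into an injective. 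Since $q\in I^\square$ as well — it is a $\mathrm{pr}\M$‑approximation, which settles the generators $0\to M$, and the generators $M_0'\to X'\oplus I_0'$ are handled as in the proof of Lemma \ref{lem:cof} — and $I^\square$ is closed under composition, we get $p\in I^\square$.

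\textbf{The cofibration.} Since $u$ is an inflation, so is $i$, and by the lemma of Bühler recalled in the preliminaries its cokernel $C$ is the pushout of $\tilde f$ along $u$, so it sits in a short exact sequence $0\to A\to C\to\mho N_1\to 0$. As $A\in\mathrm{pr}\M$ and $\mho N_1\in\mho\M$, Lemma \ref{lem:pr} gives $C\in\mathrm{pr}\M$. Thus $i$ is an inflation with cokernel in $\mathrm{pr}\M$, hence $i\in{}^\square(I^\square)$ by the characterization of weak cofibrations (inflations with $\mathrm{pr}\M$‑cokernel) established in Section 3. Therefore $f=p\circ i$ is the desired factorization.

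\textbf{Main obstacle.} The delicate point is the last step — knowing that an inflation whose cokernel lies in $\mathrm{pr}\M$ has the left lifting property against every trivial fibration. If that characterization is not taken for granted, one proves it directly: choose a presentation $0\to M_1^C\to M_0^C\to C\to 0$ of the cokernel and solve the lifting problem by the technique of the proof of Lemma \ref{lem:cof} (using rigidity of $\M$, the factorization of maps out of $\M$ through injectives, and that objects of $\mathrm{pr}\M$ lift along trivial fibrations), or equivalently exhibit such an inflation as a retract of a composite of pushouts of the generating maps of $I$. A secondary, essentially routine point to be careful about is producing the lift $\tilde f$ without circular reference to the model structure, and checking that the composite $p$ genuinely lands in $I^\square$ and not merely in $\Fib\cap\W$.
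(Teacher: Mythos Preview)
Your factorization is essentially identical to the paper's: both write $f$ as $X\to A\oplus\mho M'\to Y$, where $A\to Y$ is the $\mathrm{pr}\M$-approximation of Lemma~\ref{lem:prmapp} and the second component $X\to\mho M'$ is the inflation witnessing $X\in\mathrm{copr}\,\mho\M$. The paper builds its $\varepsilon\colon X\to\mho M_1$ from the presentation $0\to M_1\to M_0\to X\to 0$ via the pushout that also proves $\mathrm{pr}\M=\mathrm{copr}\,\mho\M$, so your $u$ and the paper's $\varepsilon$ are the same kind of map. Your argument that the second factor lies in $I^\square$ is likewise parallel: both you and the paper invoke the forward reference $I^\square=J^\square\cap\W$ from item~(iv) of the main theorem.

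The gap is in your last step. There is no ``characterization of weak cofibrations as inflations with $\mathrm{pr}\M$-cokernel established in Section~3'': Part~I, Section~3 treats only ${}^\square(J^\square)$, and Part~II, Section~3 characterizes only fibrations. The paper never proves that an arbitrary inflation with cokernel in $\mathrm{pr}\M$ lies in ${}^\square(I^\square)$; instead it verifies the lifting property for the specific map $\binom{r}{\varepsilon}$ by a direct three-pass argument. First, cofibrancy of $A\oplus\mho M_1$ lifts the bottom map to $(\varphi_1,\varphi_2)$. Second, the upper triangle fails by $\iota_K\circ\gamma$ for some $\gamma\colon X\to K$; restricting along $c\colon M_0\to X$ gives a map $M_0\to K$ that factors through an injective (because $K\in\overline{\M}^\perp$), and the explicit pushout description of $\varepsilon$ then produces $a_2\colon\mho M_1\to U$ with $a_2\circ\varepsilon=\iota_K\circ\gamma$, repairing the upper triangle. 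Third, the lower triangle now fails by a map factoring through the cokernel $C$, and cofibrancy of $C\in\mathrm{pr}\M$ (via Lemma~\ref{lem:pr}) supplies the final correction that makes both triangles commute simultaneously. Your ``Main obstacle'' paragraph gestures toward the third pass but does not address the second, where the pushout structure of $X\to\mho M'$ --- not merely the cofibrancy of the cokernel --- is what makes the correction possible. In short, your construction is correct and coincides with the paper's, but the step you have deferred to a nonexistent reference is precisely the substance of the proof.
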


\begin{proof}
Let us show that any morphism $f:X\to Y$, where $X \in \mathrm{pr}\M$ can be factorized through a morphism in $^\square (I^\square)$ followed by a morphism in $I^\square$.

Let $f:X \to Y$ be a morphism, such that $X \in \mathrm{pr}\M$.
Let \[0\to M_1 \to M_0 \to X \to 0\] be a short exact sequence, where $c$ denotes the morphism $M_0 \to X$. From lemma \ref{lem:prmapp}, there exists a $\mathrm{pr}\M$-approximation $a:A\to Y$, with $A \in \mathrm{pr}\M$. Let $\varepsilon:X \to \mho M_1$ be the induced morphism in the short exact sequence \[0 \to M_0 \to X \oplus I_{M_0} \to \mho M_1 \to 0. \]
As $A \to Y$ is a $\mathrm{pr}\M$-approximation, and $X \in \mathrm{pr}\M$,  there exists $r:X \to A$ such that \[f=a \circ r.\] We are going to show that the factorization\[ \xymatrix{X \ar^f[rr] \ar_{\left(^r_\varepsilon\right)}[dr] & & Y \\ & A \oplus \mho M_1 \ar_{(a~0)}[ur] & }\] satisfies the required properties.

It is immediate that $(a~0)$  lies in $J^\square \cap \W$. By Lemma \ref{lem:ega}, it is an element of $I^\square$. Let us show that $\begin{pmatrix} r\\ \varepsilon \end{pmatrix} \in ~^\square(I^\square)$.

Let \[\xymatrix{X \ar^{\alpha}[r] \ar_{\left(^r_\varepsilon\right)}[d] & U \ar^h[d] \\ A \oplus \mho M_1 \ar_{~~~~(\beta_1 ~ \beta_2)}[r] & V}\] be a commutative square with $h \in I^\square$.

The object $A \oplus \mho M_1$ is in $\mathrm{pr}\M$: by lemma \ref{lem:cof}, it is cofibrant. Thus, there exists a morphism $(\varphi_1~\varphi_2):A\oplus \mho M_1 \to U$ such that \begin{equation}\label{eq:fac23} h \circ \varphi_1=\beta_1 \text{ and } h \circ \varphi_2=\beta_2. \end{equation}

Unfortunately, this morphism is not the good candidate, because it does not make the upper triangle commute. We have to modify it.

As $h \in J^\square \cap \W$, it is a deflation. Let $u:K \to X$ be its kernel. There exists $\gamma:X \to K$ such that \begin{equation}
\label{eq:fac4} \alpha= u \circ \gamma + \varphi_1 \circ r + \varphi_2 \circ \varepsilon. \end{equation}
Moreover, we know that $u \in \overline{\M}^\perp$. So the morphism $u \circ \gamma \circ c$ factorizes through an injective module $I_{M_1}$.  It follows that there exists $\beta$ as in the following diagram:

\[ \xymatrix{
M_1 \ar@{>->}[r] \ar@{=}[d] & M_0 \ar@{->>}^c[r] \ar@{>->}[d] & X \ar@{>->}^{\varepsilon}[d] \ar@/^1pc/^{u \circ \gamma}[ddr] & \\
M_1 \ar@{>->}[r] & I_{M_1} \ar@{->>}[r] \ar@/_1pc/_{\beta}[drr] & \mho M_1 \ar^{\exists a_2}[dr] & \\
& & & C} \]

From the push-out property, there exists a morphism $a_2:\mho M_1 \to C$ such that \begin{equation} \label{eq:fac5} u \circ \gamma = a_2 \circ \varepsilon. \end{equation}

Then, from equations (\ref{eq:fac4}) and (\ref{eq:fac5}), we have $(\varphi_1~~\varphi_1+a_2) \circ \begin{pmatrix} r \\ \varepsilon \end{pmatrix}=\alpha$.

Unfortunately again, this morphism is not the good candidate. Now, the upper triangle commutes, but we have lost the commutativity of the lower one. We have to modify it one more time.

Moreover, the morphism $\begin{pmatrix} r \\ \varepsilon \end{pmatrix}$ is an inflation. Let $(c_1~c_2):A \oplus \mho M_1 \to C$ be a cokernel for $\begin{pmatrix} r \\ \varepsilon \end{pmatrix}$.
We have \[ h \circ (\varphi_1 ~ \varphi_2 + a_2) \circ \begin{pmatrix} r \\ \varepsilon \end{pmatrix}= h \circ \alpha= (\beta_1 ~ \beta_2) \circ \begin{pmatrix} r \\ \varepsilon \end{pmatrix}.\]
The morphism $h \circ (\varphi_1~~\varphi_2+a_2) - (\beta_1 ~ \beta_2)$ factorizes through the cokernel $C$ of $\begin{pmatrix} r \\ \varepsilon \end{pmatrix}$: there exists $b:C \to V$ such that \begin{equation}\label{eq:fac6} (\beta_1~\beta_2)=h \circ (\varphi_1 ~ \varphi_2+a_2)-b \circ (c_1~c_2). \end{equation}
Moreover, $C \in \mathrm{pr}\M$ by lemma \ref{lem:pr}, so that it is cofibrant (see Lemma \ref{lem:cof}), as $h \in J^\square \cap \W$, there exists $d:C \to U$ such that \begin{equation}\label{eq:fac7} h \circ d=b.\end{equation}

We now have enough information in order to choose the good candidate for the morphism $A \oplus \mho M_1 \to U$ which makes both triangles commute. It is:
\[(\varphi_1-d \circ c_1 ~~~ \varphi_2 + a_2 - d \circ c_2).\]
Indeed, for the upper triangle:
\begin{equation} \begin{aligned} (\varphi_1-d \circ c_1~~~\varphi_2 + a_2-d \circ c_2) \circ \begin{pmatrix} r \\ \varepsilon \end{pmatrix} & = \varphi_1 \circ r - d \circ c_1 \circ r+ \varphi_2 \circ \varepsilon + a_2 \circ \varepsilon - d \circ c_2 \circ \varepsilon \\
& = \alpha-u \circ \gamma - \varphi_2 \circ \varepsilon - d \circ c_1 \circ r + \varphi_2 \circ \varepsilon \\
& + a_2 \circ \varepsilon - d \circ c_2 \circ \varepsilon \text{ by (\ref{eq:fac4})} \\
& = \alpha- a_2 \circ \varepsilon - d \circ c_1 \circ r + a_2 \circ \varepsilon - d \circ c_2 \circ \varepsilon \text{ by (\ref{eq:fac5})} \\
& = \alpha- d \circ (c_1~c_2) \circ \begin{pmatrix} r \\ \varepsilon \end{pmatrix} \\
& = \alpha. \end{aligned} \end{equation}
The upper triangle commutes.

Now let us show the commutativity of the lower triangle:
\begin{equation} \begin{aligned} h \circ (\varphi_1-d \circ c_1~~~\varphi_2 + a_2-d \circ c_2) & = (h \circ \varphi_1 - h \circ d \circ c_1 ~~~ h \circ \varphi_2 + h \circ a_2 - h \circ d \circ c_2) \\
& = (h \circ \varphi_1-b \circ c_1~~~ h \circ \varphi_2 + h \circ a_2 - b \circ c_2) \text{ by (\ref{eq:fac7})} \\
& = (h \circ \varphi_1 + \beta_1 - h \circ \varphi_1~~~\beta_2 + b \circ c_2 - b \circ c_2) \text{ by (\ref{eq:fac6})} \\
& = (\beta_1~\beta_2) \end{aligned} \end{equation}
Therefore, both triangles commute, so $\begin{pmatrix} r \\ \varepsilon \end{pmatrix} \in ~^\square(I^\square)$, and we have the factorization when the domain of the morphism is cofibrant.
\end{proof}

\subsection{The almost model structure in the case of Frobenius categories}

We are going to show the following theorem:

Let \[J= \{ f : 0 \to \mho M, M \in \M \}\] and \[I=\{ f : M_0 \to X \oplus I_0, X \in \mathrm{pr}\M \} \cup \{0 \to M,M \in \M \}.\]
Let $G$ be the functor

\[\begin{array}{ccccc}
G & : & \E & \to & \mathrm{Mod} \overline{\M} \\
 & & X & \mapsto & \overline{\E}(-,X)/\overline{\M} \\
\end{array}\]

Let \[\W=\{f, Gf\text{ is an isomorphism} \}.\]

\begin{theo}
Let $\E$ be a weakly idempotent complete Frobenius category. Assume that $\M \subseteq \E$ is a rigid, contravariantly finite (strictly) full subcategory of $\E$ containing all the injective objects, and stable under taking direct sums and summands.

Let $J^\square$ be the class of fibrations. The cofibrations are given by the left-lifting property from acyclic fibrations. Then, $(Fib, Cof, \W)$ nearly form a model structure for the category $\E$. Indeed, the second factorization is found only when the domain is cofibrant.
\end{theo}

\begin{proof}
We show the conditions required in order to apply the version of Hovey in \cite[Theorem 2.1.19]{Ho} described in Theorem \ref{th:hov}.
\begin{enumerate}[label=(\roman*)]
\item It is well-known that $\W$ is stable under retracts. Indeed, the class of morphisms in any category satisfies $(i)$ and $(ii)$.

%

\item It is also well-known that $\W$ satisfies the "$2$ out of $3$" property.

%
%

\item $^\square(J^\square) \subseteq \W \cap~^\square(I^\square)$.

As $J \subseteq I$, we automatically have that $^\square(J^\square) \subseteq~^\square(I^\square)$.
From lemma \ref{lem:jperpperp} and $(i)$, it is immediate that any morphism of $^\square(J^\square)$ is a weak equivalence.

\item $I^\square = J^\square \cap \W$.

This is exactly Lemma \ref{lem:ega}.

\item The first required factorization is exactly corollary \ref{cor:fact}, part 1.
\item The second required factorization is exactly lemma \ref{lem:fac2}.
\end{enumerate}
\end{proof}

Recall that $\mathrm{mod}\overline{\M}$ is the classe of finitely generated modules over $\overline{\M}$.

As a consequence, we have directly the theorem of Quillen for Frobenius categories.

\begin{theo}
Let $\mathrm{Ho}~\E$ be the localization of $\E$ at the class $\W$. There is an equivalence of categories \[\mathrm{Ho}~\E \simeq \mathrm{mod}~ \overline{\M}. \]
\end{theo}

\bibliographystyle{alpha}
\bibliography{biblio}

\end{document}